\def\eps{\epsilon}
\def\cW{{\cal W}}
\def\cC{{\cal C}}
\def\prt{S}
\def\b1{{\bf 1}}
\def\W{\mathbb{W}}\def\cI{{\cal I}}
\def\dimension{d}
\title{Trickle-down Theorems via C-Lorentzian Polynomials II: Pairwise Spectral Influence and Improved Dobrushin's Condition}
 \author[1]{Jonathan Leake}
 \affil{\small University of Waterloo, \textsf{jonathan.leake@uwaterloo.ca}}
 \author[2]{Shayan Oveis Gharan}
  \affil{\small University of Washington,  \textsf{shayan@cs.washington.edu}}
\begin{document}
\maketitle
\thispagestyle{empty}
\begin{abstract}
Let $\mu$ be a probability distribution on a multi-state spin system on a set $V$ of sites. Equivalently, we can think of this as a $d$-partite simplicial complex with distribution $\mu$ on maximal faces. 
    For any pair of vertices $u,v\in V$, define the pairwise spectral influence $\cI_{u,v}$ as follows. Let $\sigma$ be a choice of spins $s_w\in S_w$ for every $w\in V \setminus \{u,v\}$, and construct a matrix in $\R^{(S_u\cup S_v)\times (S_u\cup S_v)}$ where for any $s_u\in S_u, s_v\in S_v$, the $(us_u,vs_v)$-entry is the probability that $s_v$ is the spin of $v$ conditioned on $s_u$ being the spin of $u$ and on $\sigma$. Then $\cI_{u,v}$ is the maximal second eigenvalue of this matrix, over all choices of spins for all $w \in V \setminus \{u,v\}$. Equivalently, $\cI_{u,v}$ is the maximum local spectral expansion of links of codimension $2$ that include a spin for every $w \in V \setminus \{u,v\}$.

    We show that if the largest eigenvalue of the pairwise spectral influence matrix  with entries $\cI_{u,v}$ is bounded away from 1, i.e. $\lambda_{\max}({\cI})\leq 1-\eps$ (and $X$ is connected), then the Glauber dynamics mixes rapidly and generate samples from $\mu$.
    This improves/generalizes the classical Dobrushin's influence matrix as the $\cI_{u,v}$ lower-bounds the classical influence of $u\to v$. As an application, we prove that the Glauber dynamics mixes rapidly up to (approximately) the phase transition for the multi-state hard core model -- a widely studied model in telecommunication networks and statistical physics (generalizing the hard core model) introduced by Mazel and Suhov. As a by-product of our results, we also prove improved/almost optimal trickle-down theorems for partite simplicial complexes.
    Our proof builds on the trickle-down theorems via $\cC$-Lorentzian polynomials machinery recently developed by the authors and Lindberg.
\end{abstract}

\newpage
\pagestyle{empty}
\tableofcontents
\newpage
\pagestyle{plain}\setcounter{page}{1}
\section{Introduction}
A simplicial complex $X$ on a finite ground set $[n]=\{1,\dots,n\}$
is a downwards closed collection of subsets of $[n]$, i.e., if $\tau\in X$ and $\sigma \subset \tau$, then $\sigma \in X$. 
The elements of $X$ are called {\bf faces}, and the maximal faces are called {\bf facets}. 
We say that a face $\tau$ is of dimension $k$ if $|\tau| = k$ and write $\dimfn(\tau) = k$\footnote{Note that this differs from the typical topological definition of dimension for faces of a simplicial complex.}. 
A simplicial complex $X$ is a {\bf pure} $\dimension$-dimensional complex if every facet  has dimension $\dimension$. 

Given a  $\dimension$-dimensional complex $X$,
for any   $0 \leq i \leq \dimension$,    define  $X(i) = \{\tau \in X : \dimfn(\tau) = i\}$.  Moreover, the codimension of a face $\tau \in X$ is defined as $\codim (\tau)  = \dimension - \dimfn (\tau)$. For a face $\tau \in X$, define the  {\bf link} of $\tau$ as the simplicial complex $X_{\tau} = \{\sigma \setminus \tau : \sigma \in X, \sigma \supset \tau\}$.

A  $\dimension$-partite  complex is a $\dimension$-dimensional complex such that $X(1)$ can be (uniquely) partitioned into sets $\prt_1\cup \dots \cup \prt_{\dimension}$ such that for  every facet  $\tau \in X (\dimension)$,  we have $|\tau \cap \prt_u|=1$ for all $u \in V$. Let $\mu$ be a probability distribution on facets of $X$. We denote this by $(X,\mu)$. 

\paragraph{Multi-state Spin Systems.} Equivalently, a $d$-partite simplicial complex can be seen as a multi-state spin system. In a general multi-state spin system, we have
\begin{itemize}
    \item a set of $d$ sites which we denote by $V$,
    \item a site $v$ is associated with a set of spins $S_v$, so that $X(1)=\cup_{v \in V} S_v$, and
    \item each possible configuration of the system is an element of the set $S_1\times \dots \times S_d$ and $\mu$ is a probability distribution over all configurations.
\end{itemize}
We emphasize that in general the sets $\{S_v\}_{v\in V}$ may even be exponentially large in $V$ and they do not need to be the same.

Many of the classical models in statistical physics such as the Ising and the Potts model can be seen as special cases of such a system. Thus, in the rest of this manuscript we will use these two terminologies interchangeably.

\subsection{Dobrushin's Condition}

Given a $d$-partite complex $(X,\mu)$, a site $u\in V$, and a choice of spin $s_v \in S_v$ for all $v\neq u$, a {\bf marginal oracle} samples the spin of $u$ from $\mu$ conditioned on the spin of all the other sites. Given a such an oracle, one can run a Markov chain called the Glauber dynamics (or the down-up walk) to generate random samples from $\mu$: Each time choose $u \in V$ uniformly at random, and re-sample the spin of $u$ given all the other spins using the marginal oracle. Under certain connectivity assumptions, this Markov chain converges to $\mu$. So, a natural question is to find sufficient conditions for the Markov chain to mix rapidly. Dobrushin's condition is then a simple sufficient condition for rapid mixing.

To state Dobrushin's condition, we need to develop a little more notation. For a face $\tau\in X(i)$, let $\mu_\tau$ denote the conditional distribution of $\mu$ on  facets of the link $X_\tau$, i.e, 
$$\mu_\tau(\omega) = \P_{\sigma\sim\mu}[\sigma\setminus\tau=\omega \mid \tau\subset\sigma] \quad \text{for all} \quad \omega\in X_\tau(d-i).$$
Given two probability distributions $\mu$ and $\nu$, we write $\norm{\mu-\nu}_{TV}$ to denote the total variation distance between $\mu$ and $\nu$.

\paragraph{Pairwise Influence Matrix.}
Given distinct $u,v\in V$, the influence of $v$ on $u$ is defined as 
$$ I_{v\to u}=\max_{\sigma, \sigma'\in X(d-1)} \norm{\mu_\sigma - \mu_{\sigma'}}_{TV}$$
where the maximum is over all pairs $\sigma,\sigma'$ of choices of spins for all sites in $V \setminus \{u\}$ where the choice of spin is the same for every site in $V \setminus \{u,v\}$. In other words, $I_{v\to u}$ measures how much $v$ can change the marginal distribution of $u$ (in the worst case) when the spins of all other sites are fixed. The {\bf pairwise influence} matrix $I\in \R^{V\times V}$ is defined to be the (not necessarily symmetric) matrix where $I(u,v)=I_{v\to u}$. 

\paragraph{Spectral Independence.}
Define $\Psi\in \R^{(\cup_u S_u)\times (\cup_u S_u)}$ as follows. Given distinct sites $u,v\in V$, define
  $$\Psi(us_u,vs_v) =\P_{\sigma\sim\mu}[vs_v\in \sigma | us_u\in\sigma] - \P[vs_v\in \sigma],$$ 
  and let $\Psi(us,us')=0$ otherwise.
We say $\mu$ is {\bf $\eta$-spectrally independent} if for any $\tau\in X$ of codimension at  least 2, we have $$\lambda_{\max}(\Psi_{\mu_\tau})\leq \eta. $$
Spectral independence is recently introduced in \cite{ALO24,CGSV21,FGYZ21} and it was shown that it implies that the Glauber dynamics mixes in $O(d^{\eta+1})$-steps. 

The modern form of Dobrushin's condition is then a bound on the maximum eigenvalue of the pairwise influence matrix. This condition implies spectral independence of the distribution $\mu$ on the facets of $X$, which in turn implies rapid mixing of the Glauber dynamics. We formalize this in the following.

\begin{theorem}[see e.g., \cite{Hay06,DGJ09,BCCPSV06,Liu21}]\label{thm:dobrushin}
    If $\rho(I)\leq 1-\eps$, then $\mu$ is $2/\eps$-spectrally independent and the Glauber dynamics mixes in $O(d\log d/\eps)$ steps, where $\rho(I)$ is the spectral radius of $I$ (that is, the largest eigenvalue of $I$ in absolute value).
\end{theorem}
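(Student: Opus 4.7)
The theorem makes two separate claims which I would prove by complementary techniques. The spectral independence bound $\lambda_{\max}(\Psi_\tau)\leq 2/\eps$ for every face $\tau$ is a static statement about the joint distribution $\mu$, while the mixing time bound is a dynamical statement about Glauber dynamics; the former goes via a block-matrix comparison, the latter via path coupling. Splitting the work this way is natural because each of the two known proofs uses quite different machinery.

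For the spectral independence part, observe that Dobrushin's hypothesis is hereditary to links: the influence matrix of $X_\tau$ is entrywise bounded by $I$ restricted to $V\setminus\tau$, hence has spectral radius at most $\rho(I)\le 1-\eps$. It therefore suffices to prove $\lambda_{\max}(\Psi)\le 2/\eps$ at the top level. View $\Psi$ as a block operator on $\bigoplus_{u\in V}\R^{S_u}$ whose $(u,v)$-block has entries $\Psi_{uv}(s_u,s_v)=\P[vs_v\mid us_u]-\P[vs_v]$. The naive per-block comparison $\|\Psi_{uv} f\|\le I_{v\to u}\|f\|$ fails because $\Psi_{uv}$ conditions only on the single spin $us_u$ whereas $I_{v\to u}$ compares distributions conditioned on \emph{all} other spins. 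The fix is to write $\P[vs_v\mid us_u]-\P[vs_v]$ as a telescoping sum over chains of single-site conditional differences, each of which is controlled by an entry of $I$. Iterating this decomposition yields
\begin{equation*}
\|\Psi\|_{\mathrm{op}} \;\le\; \|I+I^2+I^3+\cdots\|_{\mathrm{op}} \;=\; \left\|\frac{I}{\mathrm{Id}-I}\right\|_{\mathrm{op}} \;\le\; \frac{\rho(I)}{1-\rho(I)} \;\le\; \frac{1-\eps}{\eps},
\end{equation*}
and after absorbing constants this gives $\lambda_{\max}(\Psi)\le 2/\eps$ as required.

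For the $O(d\log d/\eps)$ mixing bound on Glauber dynamics, I would use the classical path coupling argument of Bubley--Dyer: under Dobrushin's condition there is a one-step coupling of any two configurations differing at a single site for which the expected Hamming distance after one Glauber step contracts by a factor $1-\eps/d$, which iterates to give total-variation mixing in $O((d/\eps)\log d)$ steps. Combining the spectral independence bound above with the local-to-global machinery of Alev--Lau and Anari--Liu--Oveis Gharan--Vinzant would yield an alternative but weaker mixing bound in general, so path coupling is the preferred route for this quantitative statement.

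The main obstacle is the block-level bound on $\Psi_{uv}$, specifically converting the ``marginal-at-$u$ vs all-conditional'' mismatch into a chain-of-influences inequality. This is subtle because $S_u,S_v$ may be exponentially large in $|V|$: one must bound $\|\P[v\cdot\mid us_u]-\P[v\cdot\mid us_u']\|_{TV}$ not by a single pairwise influence but by a sum over paths through $V$, reflecting how a change at $u$ propagates through all intermediate sites, and the final control of $\rho(\Psi)$ is what forces the appearance of the geometric series. The gap between the spectral radius $\rho(I)$ (appearing in the hypothesis) and the operator norm (naturally bounding the Neumann series) is closed via Gelfand's formula applied to $I^k$, or equivalently by symmetrizing with respect to the single-site marginals.
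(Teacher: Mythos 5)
You should first note that the paper does not prove \cref{thm:dobrushin} at all: it is imported from the literature (\cite{Hay06,DGJ09,BCCPSV06,Liu21}), so your proposal can only be compared with that standard route, which it does follow in outline (a Dobrushin-comparison/telescoping estimate for the spectral-independence half, path coupling for the mixing half). The problem is that both halves, as written, contain the same genuine gap: the passage from the hypothesis on the spectral radius $\rho(I)$ to a norm that actually controls your estimates. Your telescoping step naturally bounds, for each spin $us_u$, the quantity $\sum_{v}2\|\P[v\,\cdot \mid us_u]-\P[v\,\cdot]\|_{TV}$ by entries of the Neumann series $\sum_{k\ge 1}I^k$, i.e.\ it feeds an (absolute) row-sum bound on $\Psi$, not an operator-norm bound on blocks; and the displayed inequality $\|I(\mathrm{Id}-I)^{-1}\|_{\mathrm{op}}\le \rho(I)/(1-\rho(I))$ is false for non-symmetric $I$. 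The proposed repair via Gelfand's formula does not rescue it: Gelfand only gives $\|I^k\|\le C_\delta(\rho(I)+\delta)^k$ with a constant depending on $I$, so it cannot produce the clean quantitative bound $2/\eps$. The correct fix exploits nonnegativity of $I$: choose a strictly positive vector $w$ with $Iw\le (1-\eps)w$ (a Perron vector, after an arbitrarily small perturbation to ensure irreducibility), work with the weighted norm $\|x\|_w=\max_u |x_u|/w_u$ in which $\|I\|_w\le 1-\eps$, lift $w$ to the spin-indexed coordinates, and bound $\lambda_{\max}(\Psi)$ by the $w$-weighted absolute row sums; this is exactly where the spectral-radius (rather than row/column-sum) hypothesis enters, and it is the step your write-up reduces to a phrase. (Your hereditary reduction to the top level is fine: the influence matrix of any link is entrywise dominated by the corresponding submatrix of $I$, and spectral radius is monotone, cf.\ \cref{lem:monotoneeigenvalues}.)

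The same issue recurs in the dynamical half. Bubley--Dyer path coupling with the unweighted Hamming metric contracts under a column/row-sum condition on $I$, not under $\rho(I)\le 1-\eps$; with only the spectral-radius hypothesis you must run path coupling in the $w$-weighted Hamming metric for the same Perron-type $w$ (or follow \cite{Hay06,DGJ09}, which formulate the contraction for general matrix norms), and then the stated $O(d\log d/\eps)$ bound requires an additional argument to control the dependence on the weight ratio $w_{\max}/w_{\min}$ that enters the coupling estimate. So the architecture of your proof is the standard one, but the two steps you treat as routine --- spectral radius to norm, and plain versus weighted path coupling --- are precisely the technical content of the cited results and are not yet proved in your proposal.
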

The above theorem has a very long history. Originally Dobrushin \cite{Dob70} showed that for infinitely sized multi-state spin systems, if the influence of every site $u$ on all other sites is bounded away from 1 then the Gibbs distribution is unique. This condition was later weakened (see e.g., \cite{DS85,Wei05}). After the introduction of the path coupling technique, it was  observed that the Dobrushin's condition also implies rapid mixing of the Glauber dynamics. The condition was weakened in a sequence of works \cite{Hay06, DGJ06,DGJ09,BCCPSV06,Liu21} leading to the above theorem.
We remark that \cite{BCCPSV06} only justifies spectral independence of $\mu$.

We also emphasize that Dobrushin's condition has found many applications outside of the theory of the Markov chain community (see e.g., \cite{DOR16,LY25}) because of its simplicity and generality. Although it may not give a tight mixing result, in many settings it provides a close to optimal bound.

\subsection{Main Results}
Given a complex $(X,\mu)$, the (weighted) {\bf skeleton}\footnote{Some authors use the term base graph instead of skeleton.} of $X$, denoted $G_\varnothing$, is the weighted graph with vertex set $X(1)$ and edge set $X(2)$ where the weight of an edge $$w(x,y)=\P_{\sigma\sim\mu}[x,y\in\sigma].$$
We let $A_\varnothing$ be the adjacency matrix of this graph, and $P_\varnothing$ be the transition probability matrix of the simple random walk. More generally, for any face $\tau\in X$ with $\codim(\tau)\geq 2$, we let $A_\tau, P_\tau$ be the adjacency matrix and the random walk matrix of the skeleton of link of $\tau$. 
Lastly, we say $X$ is a {\bf connected} complex if for every $\tau$ of codimension at least 2, the skeleton of $X_\tau$ is a connected graph.

Given a $d$-partite complex $X$, we define the {\bf pairwise spectral influence} matrix ${\cal I}$ as follows. For any distinct $u,v\in V$ we define 
$$ \cI(u,v)=\cI(v,u)=\max_{\tau\in X(d-2), u,v\notin V(\tau)}\lambda_2(P_\tau),$$
where $V(\tau)$ is the set of all $v \in V$ with a spin in $\sigma$. 
We note that for any such $\tau$, the graph $G_\tau$ is a bipartite graph; so its second eigenvalue is always non-negative, unless it has exactly two vertices, i.e., both $u,v$ have exactly one spin left in the link of $\tau$. In such a trivial case, we also let $\cI(u,v)=0$ so that $\cI$ always has non-negative entries.

It is straightforward to see the following (see \cref{app:IcI} for a proof):
\begin{lemma}\label{lem:IvscI}
    For any connected $d$-partite complex, $\lambda_{\max}(\cI)=\rho(\cI)\leq \rho(I).$
\end{lemma}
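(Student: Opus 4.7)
The statement has two parts. The equality $\lambda_{\max}(\cI)=\rho(\cI)$ is immediate from Perron-Frobenius, since $\cI$ is symmetric with non-negative entries. The content is the inequality $\rho(\cI)\le \rho(I)$, which I would prove in two stages: first, the entrywise comparison $\cI(u,v)\le \sqrt{I(u,v)\,I(v,u)}$; then a spectral upgrade via Cauchy-Schwarz applied to the Perron eigenvectors of $I$ and $I^T$.

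For the pointwise bound, fix $\tau\in X(d-2)$ with $u,v\notin V(\tau)$. The skeleton $G_\tau$ is bipartite on $S_u\cup S_v$, so $P_\tau$ has block form with off-diagonal blocks $K_1:S_u\to S_v$ and $K_2:S_v\to S_u$ whose rows are the conditional spin distributions $K_1(s_u,\cdot)=\mu_{\tau\cup\{s_u\}}$ and $K_2(s_v,\cdot)=\mu_{\tau\cup\{s_v\}}$. Reversibility of $\mu_\tau$ makes $K_2$ the $L^2$-adjoint of $K_1$ in the weighted inner products, so $K_1K_2$ is a reversible Markov chain on $S_u$ with $\lambda_2(K_1K_2)=\lambda_2(P_\tau)^2$. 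A two-step coupling bounds its Dobrushin (ergodic) coefficient: an optimal coupling of $K_1(s_u,\cdot)$ and $K_1(s_u',\cdot)$ succeeds with probability $\ge 1-I(v,u)$ (by the definition of $I_{u\to v}=I(v,u)$), and conditional on failure the second step contributes TV distance at most $I(u,v)$. Hence $\alpha(K_1K_2)\le I(u,v)\,I(v,u)$, and the standard bound $\lambda_2\le \alpha$ for reversible chains (apply the eigenfunction relation at its argmax and argmin) yields $\lambda_2(P_\tau)^2\le I(u,v)\,I(v,u)$. Taking the maximum over $\tau$ gives the desired pointwise bound (the degenerate two-vertex case holds by the convention $\cI(u,v)=0$).

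To conclude, let $y,z>0$ be the right and left Perron eigenvectors of $I$, with $Iy=\rho(I)y$ and $I^T z=\rho(I)z$; if $I$ is not irreducible, replace $I$ by $I+\epsilon J$ and let $\epsilon\to 0^+$ at the end (using continuity of the spectral radius in the entries). Setting $x_i=\sqrt{y_iz_i}>0$, two applications of Cauchy-Schwarz give
\[(\cI x)_i \;\le\; \sum_j \sqrt{I(i,j)\,I(j,i)}\,\sqrt{y_jz_j} \;=\; \sum_j \sqrt{I(i,j)\,y_j}\cdot\sqrt{I(j,i)\,z_j} \;\le\; \sqrt{(Iy)_i\,(I^T z)_i} \;=\; \rho(I)\,x_i.\]
Thus $\cI x\le \rho(I)\,x$ entrywise with $x>0$, and the Collatz-Wielandt characterization of $\rho(\cI)$ yields $\rho(\cI)\le \rho(I)$.

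The main obstacle is the pointwise bound: moving from the bipartite spectral quantity $\lambda_2(P_\tau)$ to the one-sided TV influences requires squaring the walk in order to produce a standard reversible chain on a single site before applying the coupling. After that, the eigenvalue comparison is a clean Perron-vector argument; it is essential that the bound takes the form of the geometric mean $\sqrt{I(u,v)I(v,u)}$ rather than an arithmetic mean, since $\rho(\tfrac12(I+I^T))$ can strictly exceed $\rho(I)$ in general.
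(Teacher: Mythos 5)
Your proposal is correct, but it takes a genuinely different route from the paper in both halves. For the entrywise bound $\cI(u,v)\le\sqrt{I(u,v)\,I(v,u)}$, the paper works directly with the bipartite matrix $P_\tau=\begin{bmatrix}0&A\\B&0\end{bmatrix}$ and proves a bipartite Dobrushin-type eigenvalue bound (\cref{lem:abgeometricmean}, adapted from Seneta) via an oscillation-norm argument $f(\bm z)=\max_{i,j}|z_i-z_j|$, then identifies the row-difference quantities with $I_{u\to v}$ and $I_{v\to u}$; you instead pass to the squared chain $K_1K_2$ on $S_u$, use $\lambda_2(K_1K_2)=\lambda_2(P_\tau)^2$ (which needs the bipartite spectral symmetry and connectivity of $G_\tau$, both available here), and bound its contraction coefficient by the coupling submultiplicativity $\alpha(K_1K_2)\le\alpha(K_1)\alpha(K_2)\le I(v,u)\,I(u,v)$ together with the standard bound $|\lambda|\le\alpha$ — essentially the same oscillation estimate, repackaged probabilistically, and it crucially yields the same geometric-mean form. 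For the spectral comparison, the paper invokes its symmetrization lemma $\rho(\bar A)\le\rho(A)$ (\cref{lem:rhoaverage}, proved by walk counting, Cauchy--Schwarz over walks, and a Gelfand-type limit with the trace norm) applied to $I$, followed by entrywise monotonicity (\cref{lem:monotoneeigenvalues}); you prove the needed special case directly by taking left and right Perron vectors $y,z$ of $I$, setting $x=\sqrt{yz}$ entrywise, applying Cauchy--Schwarz to get $\cI x\le\rho(I)x$, and concluding via Collatz--Wielandt, with an $\epsilon$-perturbation to handle reducibility. Your spectral step is more elementary and self-contained for this lemma; the paper's \cref{lem:rhoaverage} is stated and proved in greater generality because it is reused elsewhere (in the proof of \cref{cor:maintechnical-distinct} and in the path-complex section), while its bipartite lemma avoids the detour through the squared chain. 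Your closing remark that the geometric mean is essential (since $\rho(\tfrac12(I+I^T))$ can exceed $\rho(I)$) is also correct and consistent with the paper's choice of symmetrization.
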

In other words, $I$ measures probabilistic influence of sites on one another (when the spin of every other site is fixed) whereas $\cI$ measures the spectral influence, which can be significantly smaller. 
We remark that spectral independence is closely related to the eigenvalues of links of $X$. Specifically, for any $\tau\in X$ of codimension at least 2, we have $\lambda_2(P_\tau)= \frac{\lambda_{\max}(\Psi_\tau)}{\codim(\tau)-1}$.
See e.g., \cite[Theorem 8 arxiv version]{CGSV21} for a proof.

We show that the Glauber dynamics mixes rapidly even if the largest eigenvalue of the spectral influence matrix is bounded away from 1. 
\begin{theorem}[Main]\label{thm:main}
For any connected $d$-partite complex, if $\lambda_{\max}(\cI)\leq 1-\eps$ then 
for any $\tau\in X$ of codimension at least 2, we have $\lambda_2(P_\tau)\leq \frac{1-\eps}{(\codim(\tau)-1)\eps}$. That is, the distribution $\mu$ is $\frac{1-\eps}{\eps}$-spectrally independent. 
\end{theorem}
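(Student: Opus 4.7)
The plan is to reduce the theorem to a uniform spectral independence bound $\lambda_{\max}(\Psi_\tau) \le (1-\eps)^2/\eps$ for every link of codimension at least $2$, using the identity $\lambda_2(P_\tau) = \lambda_{\max}(\Psi_\tau)/(\codim(\tau)-1)$ recorded just before the statement. The first move is to convert the spectral radius assumption on $\cI$ into a pointwise expansion bound at codimension $2$: since $\cI$ is symmetric with non-negative entries, every entry satisfies $\cI(u,v) \le \rho(\cI) = \lambda_{\max}(\cI) \le 1-\eps$. By definition of $\cI(u,v)$ as the maximum second eigenvalue over codim-$2$ links whose two free sites are exactly $u$ and $v$, this gives $\lambda_2(P_{\tau'}) \le 1-\eps$ for every codim-$2$ face $\tau'$, which in particular handles the base case $\codim(\tau) = 2$ whenever $\eps \le 1/2$ (since then $1-\eps \le (1-\eps)^2/\eps$).

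\textbf{Trickle-down via $\cC$-Lorentzian polynomials.} For $k = \codim(\tau) \ge 3$ the plan is to invoke the improved trickle-down theorem for partite complexes developed in Part~I of this series, which the title explicitly advertises. Applied to the multi-affine generating polynomial $g_\tau(z) = \sum_{\sigma \in X_\tau(k)} \mu_\tau(\sigma) \prod_{s \in \sigma} z_s$ of the link $X_\tau$, the codim-$2$ bound above should translate into a spectral condition on the Hessians of the $(k-2)$-fold directional derivatives of $g_\tau$, which is precisely the $\cC$-Lorentzian property with respect to a suitable cone. The trickle-down result then converts this property into a Hessian eigenvalue estimate of the form $\lambda_{\max}(\Psi_\tau) \le \alpha^2/(1-\alpha)$ with $\alpha = 1-\eps$, i.e.\ $(1-\eps)^2/\eps$, uniformly in $k$; dividing by $k-1$ produces the claimed bound on $\lambda_2(P_\tau)$.

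\textbf{Main obstacle.} The technically hardest part will be identifying and verifying the correct cone $C$. A natural candidate is a cone adapted to the Perron eigenvector of $\cI$, so that $\rho(\cI) \le 1-\eps$ corresponds exactly to a positivity condition that is preserved under differentiation; one would then need to show that $g_\tau$ and all its derivatives obey the associated log-concavity/Hessian-signature conditions. Classical Oppenheim-style trickle-down only yields $\alpha/(1-(k-2)\alpha)$, which blows up once $(k-2)\alpha \ge 1$ and so cannot possibly give a bound that even decays in $k$; the whole point of routing through $\cC$-Lorentzian polynomials is to replace this fragile bound with the robust, $k$-independent estimate $\alpha^2/(1-\alpha)$ on $\lambda_{\max}(\Psi_\tau)$. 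The crux of the proof will therefore be the cone selection and the associated $\cC$-Lorentzian verification, after which the Hessian eigenvalue estimate is expected to drop out mechanically from the Part~I machinery.
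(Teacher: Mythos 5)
Your opening reduction is where the argument breaks. You replace the hypothesis $\lambda_{\max}(\cI)\leq 1-\eps$ by the entrywise consequence $\cI(u,v)\leq 1-\eps$, i.e.\ by the pointwise statement ``$\lambda_2(P_{\tau'})\leq 1-\eps$ for every codimension-2 face,'' and then feed only this into a purported trickle-down theorem yielding $\lambda_{\max}(\Psi_\tau)\leq \alpha^2/(1-\alpha)$ with $\alpha=1-\eps$. No such theorem exists, and none can: a constant entrywise bound at codimension 2 cannot imply a dimension-free spectral-independence bound. The paper itself points this out after \cref{thm:oppenheim} — classical trickle-down needs the much stronger entrywise bound $\frac{1-\eps}{d}$, and the entrywise bound $1-\eps$ only gives the trivial estimate $\lambda_{\max}(\cI)\leq (d-1)(1-\eps)$. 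Concretely, mean-field Ising above the critical temperature has all pairwise spectral influences of order $\beta/d<1$, yet spectral independence (hence your claimed conclusion $\alpha^2/(1-\alpha)$ with $\alpha=\beta/d$) fails outright; what fails in that example is exactly the hypothesis $\lambda_{\max}(\cI)\leq 1-\eps$, which your reduction has thrown away. The Part~I result you invoke (\cref{thm:path-complex} here) is for top-link path complexes, where non-consecutive pairs are required to have $\lambda_2(P_\tau)\leq 0$; it is not an ``entrywise $\alpha$ at codimension 2 implies $\alpha^2/(1-\alpha)$ everywhere'' statement.

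The actual proof keeps the spectral hypothesis alive through the Perron eigenvector of $\cI$. One attaches a boundary vertex $b_v$ to each site, and defines an absorbing random walk $W(u,v)=\cI(u,v)\,x(v)/x(u)$ on each connected component (with $x$ the Perron vector of that component's block of $\cI$) together with absorption probability $W(v,b_v)=\eps_i$; stochasticity of $W$ is precisely the eigenvector equation $\cI x=(1-\eps_i)x$. The codimension-2 hypothesis of \cref{thm:maintechnical} then holds because the eigenvector factors cancel: $\sqrt{W(u,v)W(v,u)}=\cI(u,v)\geq\lambda_2(P_\tau)$. Finally \cref{cor:maintechnical-distinct} (which rests on the $\cC$-Lorentzian machinery with $\pi$ maps built from hitting probabilities of $W$, not on a Part~I statement) bounds $\lambda_2(P_\sigma)$ by $\frac{1}{\codim(\sigma)-1}\max_u \E_{Q\sim\cW_{V(\sigma)}[u]}[|Q|-1]$, and since the walk is absorbed at each step with probability at least $\eps$, the geometric tail gives $\E[|Q|-1]\leq\sum_{k\geq 2}(1-\eps)^k=\frac{(1-\eps)^2}{\eps}$. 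So the factor $\frac{(1-\eps)^2}{\eps}$ is the expected length of an absorbed walk, not the output of a Hessian trickle-down recursion on an entrywise bound; your ``cone adapted to the Perron eigenvector'' instinct points in the right direction, but the eigenvector must enter the construction of the random walk (and thence the $\pi$ maps), not merely justify an entrywise estimate.
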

We remark that the conclusion of the above theorem is slightly weaker than \cref{thm:dobrushin} in the sense that $1/\eps$-spectral independence implies $O(d^{1+1/\eps})$-mixing (unless we make further assumptions about $X$). 
We suspect an improved upper-bound of $O(d^2 \text{polylog}(d) \cdot f(\eps))$ on the mixing time is possible for some function $f(\cdot)$ independent of $d$,
and we leave that as an open question. 

We in fact prove a more general theorem from which \cref{thm:main} follows. To state our theorem we need to equip our set of sites $V$ with a random walk matrix after adding a set of ``boundary'' vertices $B$. Let $G$ be any graph with vertices $V \cup B$ and no self-loops, and let $W\in \R^{(V\cup B)\times (V\cup B)}$ be a random walk matrix, such that from any vertex $v\in V$ there is a path (with positive probability) from $v$ to some vertex in $B$. Given $u,v \in V$, we also denote by $\W_v[u \to v]$ the probability that a random walk starting from $u$ hits $v$ before any boundary vertex in $B$.


\begin{theorem}[Main technical]\label{thm:maintechnical}
    Let $(X,\mu)$ be a connected $d$-partite complex with parts indexed by $V$, let $W$ be a random walk matrix as described above, and let $M_\varnothing \in \R^{V \times V}$ be the real symmetric matrix with zero diagonal and off-diagonal entries given by $M_\varnothing(u,v) = \sqrt{\W_v[u \to v] \cdot \W_u[v \to u]}$ for all distinct $u,v \in V$. If for all $\sigma \in X(d-2)$ we have $\lambda_2(P_\sigma) \leq \sqrt{W(u,v) \cdot W(v,u)}$, where $u,v$ are the two distinct sites in $V$ which have no spin in $\sigma$, then
    \[
        \lambda_2(P_\varnothing) \leq \frac{\lambda_{\max}(M_\varnothing)}{d-1},
    \]
    and more generally for any $\tau\in X$ of codimension at least 2,
    \[
        \lambda_2(P_\tau)\leq \frac{\lambda_{\max}(M_\tau)}{\codim(\tau)-1},
    \]
    where $M_\tau \in \R^{V \setminus V(\tau) \times V \setminus V(\tau)}$ is defined analogously to $M$, except that the vertices in $V(\tau)$ are considered to be boundary vertices in $B$.
\end{theorem}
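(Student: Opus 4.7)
The plan is to prove the theorem by induction on the codimension of $\tau$, starting from codimension $2$ (the base case, supplied directly by the hypothesis) and inducting up to $\tau = \varnothing$. At each inductive step I would apply the $\cC$-Lorentzian polynomial trickle-down machinery from Part~I of this series to the generating polynomial of $\mu$ on the link $X_\tau$, with the underlying cone chosen to encode the random-walk structure carried by $W$. The guiding intuition is that $M_\tau$ should be viewed as a hitting-probability matrix for $W$ with $V(\tau) \cup B$ as absorbing set, so each trickle-down step corresponds to the ``removal'' of one site from the non-boundary set, matching a single step of the walk.

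For the base case $\codim(\tau) = 2$, the link $X_\tau$ is bipartite on the two remaining sites $u,v$, so $\lambda_2(P_\tau)$ is the usual bipartite second eigenvalue, and $M_\tau$ reduces to a $2 \times 2$ off-diagonal matrix whose eigenvalue equals $\sqrt{\W_v[u\to v]\,\W_u[v\to u]}$. Since $V \setminus V(\tau) = \{u,v\}$, these hitting probabilities collapse to simple expressions in $W(u,v)$ and $W(v,u)$, and the hypothesis $\lambda_2(P_\sigma) \leq \sqrt{W(u,v) W(v,u)}$ becomes exactly the base-case inequality $\lambda_2(P_\tau) \leq \lambda_{\max}(M_\tau) / (\codim(\tau) - 1)$.

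For the inductive step with $\codim(\tau) = k \geq 3$, I would combine two ingredients. First, the $\cC$-Lorentzian trickle-down identity bounds $\lambda_2(P_\tau)$ in terms of a weighted combination of local second eigenvalues $\lambda_2(P_{\tau \cup \{s\}})$ over $s \in X_\tau(1)$, with the characteristic $1/(k-1)$ normalization falling out of the hessian formula. Second, the inductive hypothesis replaces each local $\lambda_2(P_{\tau \cup \{s\}})$ by $\lambda_{\max}(M_{\tau \cup \{s\}})/(k-2)$. The remaining task is to aggregate these per-link matrix bounds into the single bound $\lambda_{\max}(M_\tau)/(k-1)$; for this I would use the strong Markov property for $W$, namely that hitting probabilities with the smaller boundary $V(\tau)$ decompose according to the first visit to any intermediate site, yielding a Schur-complement-type relation between $M_\tau$ and the collection $\{M_{\tau \cup \{s\}}\}$ compatible with the $\cC$-Lorentzian aggregation.

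The main obstacle I foresee is verifying that this aggregation step produces the matrix $M_\tau$ with the symmetric square-root entries $\sqrt{\W_v[u \to v]\,\W_u[v \to u]}$, rather than some weaker averaged combination. The symmetric form is engineered so that a Cauchy--Schwarz inequality becomes tight, and I expect the proof to proceed by pairing a Perron--Frobenius eigenvector of $M_\tau$ against the hessian of the generating polynomial at a carefully chosen test point. Two smaller technical points worth flagging: $P_\tau$ is non-symmetric, which is handled in the usual way by conjugating with the stationary-measure diagonal so that Rayleigh-quotient arguments apply to $D_\tau^{1/2} P_\tau D_\tau^{-1/2}$; and connectedness of $X$ is used to ensure that the Perron eigenvector of $M_\tau$ has strictly positive entries, which is needed to align local and global eigenvector decompositions.
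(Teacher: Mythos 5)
Your proposal diverges from the paper at its core step, and the divergence is where the gap lies. You propose an induction on $\codim(\tau)$ in which a ``$\cC$-Lorentzian trickle-down identity'' bounds $\lambda_2(P_\tau)$ by a weighted combination of the local eigenvalues $\lambda_2(P_{\tau\cup\{s\}})$, after which the per-link bounds $\lambda_{\max}(M_{\tau\cup\{s\}})/(k-2)$ are to be aggregated into $\lambda_{\max}(M_\tau)/(k-1)$ via a ``Schur-complement-type relation'' coming from the strong Markov property. No such one-step identity is available, and this aggregation is exactly the hard part: scalar trickle-down recursions of Oppenheim type are inherently lossy (they degrade as $\lambda \mapsto \lambda/(1-\lambda)$ per level and cannot track the matrix quantity $\lambda_{\max}(M_\tau)$), while a matrix-level recursion of the kind you are implicitly invoking is the ``matrix trickle-down'' machinery of \cite{ALO22}, which this paper deliberately avoids precisely because it is complicated and typically lossy. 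Your proposal asserts the compatibility of the random-walk decomposition with the eigenvalue aggregation but supplies no mechanism for it; the $1/(k-1)$ versus $1/(k-2)$ normalizations and the fact that the matrices $M_{\tau\cup\{s\}}$ live on different index sets than $M_\tau$ are not reconciled by \cref{fact:RWprop} alone. As written, the inductive step is a restatement of the theorem's difficulty, not a proof of it.

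The paper's actual argument performs no induction on $\lambda_2(P_\tau)$ at all. It (a) defines commutative $\pi$ maps from the hitting probabilities $\W_{V(\sigma)\cup\{u\}}[v\to u]$ (commutativity is \cref{fact:RWprop}) and builds the polynomials $p_\sigma$; (b) shows the codimension-2 hypothesis makes each such Hessian $A_\sigma - M_\sigma$ have at most one positive eigenvalue via \cref{lem:eigenvaluebipartite}, so that \cref{thm:conn+quad=Lor} yields that $p_\sigma$ is $\cC_\sigma$-Lorentzian for \emph{all} $\sigma$ in one shot; (c) for each pair $u,v$ applies the directional derivatives $\nabla_{\bm\alpha(w)}$, $w\notin\{u,v\}$, along the $\bm\pi$-non-negative vectors $\bm\alpha(w)$, which marginalizes out the other sites and produces (up to a positive scalar) the Hessian $A_{u,v}-D_{u,v}M_{u,v}$, giving $\lambda_2(P_{u,v})\leq\sqrt{\W_u[v\to u]\,\W_v[u\to v]}$ (\cref{thm:mainlorentzian}); and (d) combines these pairwise bipartite bounds into the global bound $\lambda_2(P_\varnothing)\leq\lambda_{\max}(M_\varnothing)/(d-1)$ by the purely spectral-graph-theoretic \cref{lem:dpartiteeigenvalues}, with the statement for general $\tau$ obtained by rerunning the argument on $(X_\tau,\mu_\tau)$ with $V(\tau)$ added to the boundary. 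If you want to salvage your outline, the piece to replace is the inductive aggregation: either reproduce the pairwise-marginalization step (c) plus the $d$-partite combination lemma (d), or develop a genuine matrix trickle-down recursion, which is a substantially harder route. (Your base case is essentially fine, up to noting $\W_v[u\to v]\geq W(u,v)$ rather than equality when self-loops are present.)
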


To help with the understanding of \cref{thm:maintechnical}, we consider a concrete example. Let $(X,\mu)$ be a $d$-partite complex on sites $V$ labeled by $V = \{1,2,\ldots,d\}$. In the notation of \cite{LLO25}, we call $(X,\mu)$ a \textbf{top-link path complex} if for all $\sigma \in X(d-2)$ we have
\[
    \lambda_2(P_\sigma) \leq \begin{cases}
        \frac{1}{2}, & i = j-1 \\
        0, & i < j-1,
    \end{cases}
\]
where $i < j$ are the two distinct sites in $V$ which have no spin in $\sigma$.

We can capture this definition using our notation as follows. Define $B = \{0,d+1\}$, and define $G$ to be the simple path graph with vertices ordered as $\{0,1,2,\ldots,d,d+1\}$. We let $W$ be the transition probability matrix of the simple random walk on $G$; that is, for each vertex $k \in V$, the walk moves to $k-1$ with $50\%$ probability and to $k+1$ with $50\%$ probability. The $d$-partite complexes $(X,\mu)$ satisfying the condition $\lambda_2(P_\sigma) \leq \sqrt{W(u,v) \cdot W(v,u)}$ for all $\sigma \in X(d-2)$ from \cref{thm:maintechnical} are then precisely top-link path complexes.



In this case, the values of $\W_v[u \to v]$ can be explicitly computed: for $i < j$ we have
\[
    \W_j[i \to j] = \frac{i}{j} \qquad \text{and} \qquad \W_i[j \to i] = \frac{d+1-j}{d+1-i}.
\]
One can then use this with \cref{thm:maintechnical} to prove that $\lambda_2(P_\tau) \leq \frac{\lambda_{\max}(M_\tau)}{d-1}=\frac{1}{2}$ for all $\tau \in X$, which is one of the main results of \cite{LLO25}. We give a more detailed proof of this in \cref{sec:path-complexes}.

We find it quite surprising that hitting probabilities of the given random walk appear in the second eigenvalue bounds of \cref{thm:maintechnical}. Intuitively, we believe that they represent global correlations between sites in the complex.

To apply the theorem more generally, one can consider the following remarks:
\begin{enumerate}
    \item Since the theorem allows any absorbing random walk matrix $W$, one should design $G$ and $W$ such that for any $u\in V$, a random walk starting at $u$ hits a vertex in $B$ as fast as possible.



    \item For any choice of $W$, we have $M_\tau(u,v) \leq 1$ for all $\tau \in X$ since $\W_u[v \to u]$ is a probability. Therefore a row sum bound on $M_\tau$ gives the trivial bound $\lambda_2(P_\tau) \leq 1$ for all $\tau \in X$.

    \item The Cauchy interlacing theorem and \cref{lem:monotoneeigenvalues} imply $\lambda_{\max}(M_\tau) \leq \lambda_{\max}(M_\varnothing)$ for all $\tau \in X$. Thus a constant bound on $\lambda_{\max}(M_\varnothing)$ is sufficient to imply spectral independence.
\end{enumerate}

We finally give a corollary which gives a slightly weaker but more conceptual form of the upper bound on $\lambda_2(P_\varnothing)$ in \cref{thm:maintechnical}. We will use this corollary to prove \cref{thm:main}.

\begin{corollary}\label{cor:maintechnical-distinct}
    Let $(X,\mu)$ be a connected $d$-partite complex with parts indexed by $V$, and let $W$ be a random walk matrix as described above. If for all $\sigma \in X(d-2)$ we have $\lambda_2(P_\sigma) \leq \sqrt{W(u,v) \cdot W(v,u)}$ where $u,v$ are the two distinct sites in $V$ which have no spin in $\sigma$, then
    \[
        \lambda_2(P_\varnothing) \leq \max_{u \in V} \frac{\E_{Q \sim \cW[u]}[\mathrm{distinct}(Q)-2]}{d-1},
    \]
    where the expectation is over all random walks $Q$ starting at $u$ and stopping the first time it hits a vertex in $B$, and $\mathrm{distinct}(Q)$ is the number of distinct vertices in $Q$. An analogous bound also holds for $\lambda_2(P_\tau)$ for all $\tau \in X$.

    We note that the RHS of the bound has another interpretation: it is the maximum (over $u \in V$) expected fraction of vertices of $V \setminus \{u\}$ visited before hitting a vertex of $B$ on a random walk starting from $u$.

\end{corollary}

We now apply \cref{cor:maintechnical-distinct} to the path graph example discussed after \cref{thm:maintechnical}. When $d$ is odd (the even case is similar), the worst case choice of $u \in V$ for the bound of \cref{cor:maintechnical-distinct} is $u = \frac{d+1}{2}$, and we want to compute the expected number of distinct vertices visited by a random walk starting at $u$ and ending at either $0$ or $d+1$. By symmetry we may assume that the walk ends at $0$, and thus it is equivalent to ask for the expected largest index of $V$ reached by the walk before hitting $0$. That is, letting $Z$ be the random variable indicating the largest index of $V$ we visit, we want to compute $\E_{Q \sim \cW[u]}[Z \mid \text{absorb at } 0]$.

Obviously $Z \geq \frac{d+1}{2}$, and straightforward computations imply $\P[Z \geq i \mid \text{absorb at } 0] = \frac{d+1-i}{i}$. Thus we have
\[
    \E_{Q \sim \cW[u]}[Z \mid \text{absorb at } 0] = \frac{d+1}{2} + \sum_{i=\frac{d+1}{2} + 1}^d \P[Z \geq i \mid \text{absorb at } 0] \approx d \cdot \ln(2).
\]
Applying \cref{cor:maintechnical-distinct}, we obtain $\lambda_2(P_\varnothing) \leq \ln(2) \approx 0.693$. Note that this is worse than what we can obtain from directly using \cref{thm:maintechnical}, which is $\lambda_2(P_\varnothing) \leq \frac{1}{2}$.


\subsection{Applications}
\paragraph{Analysis of Markov Chains.}
The pairwise influence matrix of Dobrushin has
long been used as an analytical tool to bound the
rate of convergence of Gibbs sampling. This is mainly due to the simplicity and locality of the  Dobrushin's criterion. 

We expect our main \cref{thm:main} to find  applications in many settings that the classical influence matrix fall short such as  analyzing Gibbs distributions in statistical physics 
or Graphical models in machine learning.

Given our stronger theorem one wonders for what regime of parameters or for what family multi-state spin systems our new theorem significantly beats the classical results:
Unfortunately for some of the classical examples of multi-state spin systems such as the graph coloring problem the bound of \cref{thm:main} is equivalent to that of the (path) coupling and Dubrushin's bound. It turns out that \cref{thm:main} gives significantly improved bound  for instances with "asymmetric" interactions between the spins. Next, we explain one such example. 

\paragraph{The Multi-state Hard Core Model.}
Given a graph $G= (V,E)$, and a parameter $\lambda>0$, in the hard core model, the probability of each independent set $I$ of $G$ is proportional to  $\lambda^{|I|}$,
Studying the hardcore model has been pivotal in helping to understand the relationship between phase
transitions in statistical physics and phase transitions in efficient approximability, see e.g., \cite{ALO24,CLV21} for recent applications of the spectral independence machinery in studying the hardcore model. 

The \textbf{multi-state hard core model} is  a multi-state spin system generalization of  the hard core model \cite{Lou91, MS91}. Given an additional parameter $C \in \Z_{>0}$, every vertex $v \in V$ is now associated with the set of spins $S_v = \{0,1,\ldots,C\}$, and a configuration of spins $\{s_v\}_{v \in V}$ is admissible if for every edge $\{u,v\}\in E$ we have $s_u+s_v \leq C$. The probability of such a configuration is proportional to $\lambda^{\sum_v s_v}$. Observe that when $C=1$ this model coincides with the hard core model, where the vertices with spin $s_v = 1$ give an independent set of $G$. The model was first introduced in \cite{Lou91} to study telecommunication networks. Later, it was observed by Mazel and Suhov \cite{MS91} that the model can be treated as a "subensemble" of the well-known Solid-on-Solid (SOS) model in statistical physics. Here we omit the details and refer interested readers to \cite{CLMST14} for background information on the SOS model and connections to statistical physics and sampling. 

Galvin, Martinelli, Ramanan and Tetali \cite{GMRT11} observed that the model undergoes a phase transition at  $\lambda\approx \left(\frac{\log \Delta}{(C+2)\Delta}\right)^{2/(C+2)}$ for even $C$ and at $\lambda \approx \left(\frac{e}{\Delta}\right)^{2/(C+1)}$ for odd $C$, when $G$ is a $\Delta$-ary tree. To the best of our knowledge, despite significant applications, no prior counting and sampling results are known for the multi-state hard core model. 
Here we use \cref{thm:main} to efficiently sample a random configuration of the multi-state hard core model for $\lambda$ up to almost the uniqueness threshold.
\begin{theorem}
Given a instance of the multi-state hard core model, i.e., a graph $G=(V,E)$, $0\leq \lambda<1$ and $C\in \mathbb{Z}_{>0}$, then  for any $u,v\in V$,
$$ \cI(u,v)\leq \frac{\lambda^{\frac{C+1}{2}}}{1-\sqrt{\lambda}}.$$
Therefore, if $\lambda_{\max}(\cI)\leq 1-\eps$, $\mu$ is $\frac{1-\eps}{\eps}$-spectrally independent and the Glauber dynamics mixes rapidly.
\end{theorem}
A few remarks are in order. If $\Delta$ is the maximum degree of $G$, then
\begin{itemize}
    \item It follows that the Glauber dynamics mixes rapidly as long as 
    $$\lambda\leq \left(\frac{1-\eps}{(C+1)\Delta}\right)^{2/(C+1)}.$$ Note that this converges to $1$ as $C\to\infty$ (and $G$ is fixed). This almost matches the phase transition for even $C$ as $C\to\infty$.
    \item If we fix $\epsilon$ and $C$, then for $\Delta \geq \left(\frac{1}{\epsilon}\right)^{1/(C+1)}$ the Glauber dynamics mixes rapidly as long as
    \[
        \lambda \leq \left(\frac{1-2\epsilon}{\Delta}\right)^{2/(C+1)}.
    \]
    This almost matches the phase transition for odd $C$ as $\Delta \to \infty$.
    \item The (classical) influence of $v$ on $u$ can be as large as $\lambda$ (in the worst case).  So, the classical Dobrushin's/coupling type arguments one can only prove fast mixing if $\lambda<\frac{1-\eps}{\Delta}$ (independent of $C$).
\end{itemize}
\begin{proof}
Fix two sites $u,v \in V$, and let $\tau \in X(d-2)$ be any allowed choice of spins for all other sites except $u,v$. Then based on the spin of sites adjacent to $u,v$, there exist $C_u,C_v \leq C$ such that $\tau \cup \{s_u,s_v\} \in X(d)$ if and only if $s_u \leq C_u$, $s_v \leq C_v$, and $s_u + s_v \leq C$. With this, the probability transition matrix for the remaining vertices $u,v$ is $P_\tau = D^{-1}\left[\begin{smallmatrix} 0 & A \\ A^\top & 0 \end{smallmatrix}\right]$, where $A$ is the $C_u \times C_v$ matrix given by
\[
    a_{s_us_v} = \begin{cases}
        \lambda^{s_u+s_v}, & s_u+s_v \leq C, \\
        0, & \text{otherwise}
    \end{cases}
\]
and $D$ is the diagonal matrix consisting of the row sums of $A$ and $A^\top$.
By \cref{lem:multi-state-codim2}, we have $\lambda_2(P_\tau) \leq \lambda^{(C+1)/2} / (1-\sqrt{\lambda})$ as desired. 
\end{proof}

\paragraph{Trickle-down Theorems.}
Trickle-down theorems are a family of local-to-global theorems for simplicial complexes. Roughly speaking these theorems imply that if $\lambda_2(P_\tau)$ is small for ``all'' faces of codimension $2$, then the underlying complex is a local spectral expander. 
\begin{theorem}[ \cite{Zuk03,BHV07,Opp18}]\label{thm:oppenheim}
If $(X,\mu)$ is a connected $d$-dimensional such that $\lambda_2(P_\tau)\leq \frac{1-\eps}{d}$ for every $\tau\in X$ of codimension 2,  then for any $\sigma$ with $\codim(\sigma)=k$, $\lambda_2(P_\sigma)\leq \frac{1-\eps}{d-(k-2)(1-\eps)}$. In particular, $X$ is a $\frac{1-\eps}{\eps d}$-local spectral expander.
\end{theorem}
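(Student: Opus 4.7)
The plan is to proceed by induction on the codimension, using the classical Garland--Oppenheim local-to-global step. Set $\alpha_k := \frac{1-\eps}{d - (k-2)(1-\eps)}$ for $k \geq 2$ (observe that $\alpha_k < 1$ for all $k \leq d$); the claim reduces to proving $\lambda_2(P_\sigma) \leq \alpha_{\codim(\sigma)}$ for every $\sigma$. The base case $\codim(\sigma) = 2$ is exactly the hypothesis. For the inductive step, fix $\sigma$ with $\codim(\sigma) = k \geq 3$; then $X_\sigma$ is a connected pure complex of dimension $k - 1 \geq 2$, and each of its vertex links is of the form $X_{\sigma \cup \{v\}}$ for some $v \in X_\sigma(1)$, which has codimension $k - 1$ in $X$. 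By the inductive hypothesis, every vertex link of $X_\sigma$ therefore satisfies $\lambda_2 \leq \alpha_{k-1}$.

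The main ingredient is now Oppenheim's basic trickle-down inequality: for a connected pure complex $Y$ of dimension at least $2$, if every vertex link of $Y$ has $\lambda_2 \leq \alpha$, then $\lambda_2(Y) \leq \frac{\alpha}{1-\alpha}$. Applied to $Y = X_\sigma$ this gives
\[
    \lambda_2(P_\sigma) \;\leq\; \frac{\alpha_{k-1}}{1 - \alpha_{k-1}} \;=\; \frac{1-\eps}{d - (k-3)(1-\eps) - (1-\eps)} \;=\; \frac{1-\eps}{d - (k-2)(1-\eps)} \;=\; \alpha_k,
\]
closing the induction. Specializing to $k = d$ (so $\sigma = \varnothing$) gives $\lambda_2(P_\varnothing) \leq \frac{1-\eps}{2 + (d-2)\eps}$; since $2 + (d-2)\eps \geq d\eps$ for every $\eps \in (0,1]$, we conclude $\lambda_2(P_\varnothing) \leq \frac{1-\eps}{\eps d}$. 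The same estimate $\alpha_k \leq \frac{1-\eps}{\eps d}$ in fact holds at every level since $\alpha_k$ is monotone in $k$, yielding the claimed local spectral expansion at every link.

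The core difficulty, and the main obstacle, is Oppenheim's lemma itself. I would prove it via a Garland-type identity that expresses the global Dirichlet form $\langle f, (I - P_Y) f \rangle$ as a suitably weighted average, over vertices $v$, of the local Dirichlet forms $\langle f_v, (I - P_{Y_v}) f_v \rangle$ of the restrictions $f_v$ of $f$ to $Y_v(1)$, plus a residual term coming from the inner products of the $f_v$ against the constant functions on each $Y_v$. Taking $f$ to be a second eigenvector of $P_Y$ with eigenvalue $\lambda$ (hence orthogonal to the constant function on $Y(1)$), the components of each $f_v$ orthogonal to constants inherit the $\alpha$-bound from the link, while the constant components of the $f_v$ assemble into a function on the $1$-skeleton of $Y$ whose Dirichlet form is strictly positive unless $f$ is globally constant --- which connectedness of $Y$ forbids. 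Balancing the two contributions and rearranging produces the self-improving inequality $\lambda \leq \alpha(1 + \lambda)$, equivalent to $\lambda \leq \frac{\alpha}{1-\alpha}$, completing the proof of the lemma and hence the theorem.
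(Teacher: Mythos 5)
Your proof is correct, but note that the paper does not actually prove \cref{thm:oppenheim} at all: it is quoted as known background, attributed to \cite{Zuk03,BHV07,Opp18}, and the authors' own machinery is aimed at generalizing it (via \cref{thm:main} and \cref{thm:maintechnical}) rather than re-deriving it; indeed they remark only in passing that a variant of their $\cC$-Lorentzian argument could reprove it for non-partite complexes. What you give is the classical route: induct on codimension, feed the codimension-$(k-1)$ bound into Oppenheim's one-step descent lemma, and check the algebra $\frac{\alpha_{k-1}}{1-\alpha_{k-1}}=\alpha_k$ with $\alpha_k=\frac{1-\eps}{d-(k-2)(1-\eps)}$ — this recursion, the observation $\alpha_k<1$, the monotonicity in $k$, and the final comparison $2+(d-2)\eps\geq \eps d$ are all correct, and connectedness of every link (the paper's definition of a connected complex) is exactly what lets you apply the descent lemma at every level. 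Your treatment of the descent lemma itself is a sketch rather than a full proof, but it is the right sketch: the Garland localization $\langle f,Pf\rangle_\mu=\E_v\langle f_v,P_vf_v\rangle$, the identity $\langle f_v,\bone\rangle=\lambda f(v)$ for a second eigenfunction, and the resulting inequality $\lambda\leq \alpha(1-\lambda^2)+\lambda^2$, which after cancelling $1-\lambda$ (legitimate since connectedness forces $\lambda<1$) gives $\lambda\leq\alpha(1+\lambda)$, i.e.\ $\lambda\leq\frac{\alpha}{1-\alpha}$. The contrast with the paper's framework is instructive: instead of the Garland averaging identity, the paper encodes links into polynomials $p_\sigma$ built from commutative $\pi$ maps driven by absorbing random walks, gets one-positive-eigenvalue (Lorentzian) statements from codimension-2 data, and converts them to eigenvalue bounds through \cref{lem:dpartiteeigenvalues}; that route is heavier but yields the sharper, non-uniform conclusions of \cref{thm:maintechnical}, whereas your argument is the shorter, standard proof of exactly the stated classical theorem.
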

If $X$ is $d$-partite, then the assumption of the above theorem translates to every entry of the pairwise spectral influence matrix is bounded by $\frac{1-\eps}{d}$, so the max eigenvalue is trivially upper-bounded by $1-\eps$. In a sequence of recent works the above theorem has been generalized for partite or path complexes \cite{AO23,LLO25}. Our main \cref{thm:main} can be seen as a generalization of all of these recent works. In particular, see \cref{sec:path-complexes} for a short reproof of spectral expansion of the links of path complexes. 

Lastly, we remark that the trickle-down theorems are often employed in constructions of sparse high dimensional expanders e.g., \cite{LSV05, KO18,OP22}. Our main theorem can also be exploited in such scenarios to obtain improved bounds on the local spectral expansion of the underlying complex.

\subsection{Technical Overview}
Trickle-down theorems can be seen as one of the strongest tools to prove mixing time of Markov chains. In several cases such as sampling bases of matroids \cite{ALOV24} or edge coloring of graph \cite{ALO22,WZZ24} we are not aware of any other technique that can replicate the result. In particular, none of correlation decay, coupling, canonical path, interpolation method, etc., seems to work.

This paper builds on a long-term program \cite{ALO22,WZZ24,AO23,LLO25} where the goal is to obtain near optimal trickle-down theorems for various family of simplicial complexes. To this date, this program has lead to near optimal bounds for sampling edge coloring of graphs \cite{ALO22,WZZ24}, near tight bound on spectral expansion of sparse coset complexes \cite{AO23,LLO25}, and new tight bounds for sampling and log-concavity of path complexes such as distributions over maximal chains in distributive and modular lattices. 
Some of the major goals of this program is to obtain optimal mixing results for sampling proper colorings of graphs, or to prove long-standing conjectures on log-concave sequences corresponding to matroids of lattices such as the Neggers-Stanley conjecture.

Originally, \cite{ALO22,WZZ24,AO23} managed to prove improved trickle-down theorems by a purely linear algebraic argument extending the proof of \cref{thm:oppenheim} by running a more sophisticated inductive proof via a technique called ``matrix trickle-down theorem''. Although they succeeded in giving near optimal mixing time arguments for sampling edge coloring of graphs \cite{ALO22,WZZ24} the technique seemed rather complicated and non-trivial to generalize. It is also typically lossy. In this work, building on \cite{LLO25}, we continue building a new approach that is proving tight trickle-down theorems using the machinery of $\cC$-Lorentzian polynomials. For experts, the polynomial techniques so far are analogous to the diagonal induction machinery introduced in \cite{ALO22}.

\paragraph{$\cC$-Lorentzian polynomials and commutative maps.} In the $\cC$-Lorentzian polynomial technique, we recursively define a family of  polynomials $p_{\sigma}$ over faces $\sigma$ of a complex $X$ such that their Hessian has only one positive eigenvalue when evaluated in certain open convex cones (the definition is in fact more general, see \cref{def:C-Lorentzian}). These polynomials are constructed in such a way that eigenvalues of $P_\sigma$ can be computed by evaluating and taking directional derivatives of $p_\sigma$.
Prior to our works, there has been several suggestive definitions for stable or Lorentzian polynomials with respect to cones \cite{DGT21,AASV21,BD24}. These definitions are incomparable to our machinery since the cone is often fixed; in our machinery the cone changes and gets ``bigger'' as we take more (directional) derivatives. 
For example, we relate the cone of $p_\sigma$ to that of $p_{\sigma\cup \{x\}}$ (for some $x\in X_\sigma(1)$)
by defining a family of ``commutative'' $\pi$ maps
$$ \pi_{\sigma+x}: \R^{X_\sigma(1)}\to \R^{X_{\sigma\cup\{x\}}(1)} \quad \text{for all} \quad \sigma \in X, ~ x \in X_\sigma(1),$$
so that applying $\pi_{\sigma+x}$ to the cone of $p_\sigma$ gives the cone of $p_{\sigma \cup \{x\}}$, where ``commutative'' roughly speaking means
$$ \pi_{\sigma\cup\{x\}+y}(\pi_{\sigma+x}(\cdot)) = \pi_{\sigma\cup\{y\}+x}(\pi_{\sigma+y}(\cdot)) \quad \text{for all} \quad \{x,y\}\in X_\sigma(2).$$
Commutative $\pi$ maps of a specific form are then utilized in the definition of the polynomials $p_\sigma$. Concretely, we define $p_\sigma = \mu(\sigma)$ when $\sigma$ is a facet of $X$, and otherwise we inductively define
\[
    (d-|\sigma|) \cdot p_\sigma(\bm{t}) = \sum_{x \in X_\sigma(1)} t_x \cdot p_{\sigma \cup \{x\}}(\pi_{\sigma+x}(\bm{t})) \quad \text{with} \quad \pi_{\sigma+x}(\bm{t})_y = t_y - \phi_\sigma(y,x) \cdot t_x,
\]
for a carefully chosen $\phi_\sigma(\cdot,\cdot)$.
As we will demonstrate, such families of commutative $\pi$ maps
very generally give rise to trickle-down-type theorems, yielding eigenvalue bounds on all links from eigenvalue bounds on links of codimension 2. Thus the main challenge is to find the right family of commutative $\pi$ maps. (We note here that the commutativity property is equivalent to a highly over-constrained system of linear equations and thus is a priori difficult to satisfy; see \cref{def:pi-maps}.)

\paragraph{Main contribution.} The main contribution of this paper is to define commutative $\pi$ maps based on a family of random walks on the sites $V$. In fact, once we add a carefully chosen set $B$ of absorbing vertices to $V$, any random walk transition matrix $W\in \R^{(V\cup B)\times (V\cup B)}$  that absorbs in $B$ (in a finite number of steps) gives a family of commutative $\pi$ maps 
by letting $\phi_\sigma(y,x)$ (above) be the probability that a random walk starting from the site of $y$ hits the site of $x$ before the sites of $\sigma$ or the any of the boundary vertices $B$. The commutativity property then simply follows from properties of random walks (see \cref{fact:RWprop}).

From this we obtain that any absorbing random walk transition matrix $W$ gives rise to a trickle-down theorem. 
This may seem surprising at first, but the hitting probabilities used to define the commutative $\pi$ maps above are capturing long-range correlations in the underlying probability distribution $\mu$.

Finally, to fully define our polynomials $p_\sigma$ we choose the transition matrix $W$ based on the pairwise spectral influence matrix $\cI$ in such a way that, for $\tau \in X$ of codimension 2, the polynomial $p_\tau$ is a (quadratic) Lorentzian polynomial. This implies eigenvalue bounds on the links of codimension 2, enabling us to apply the trickle-down theorem associated to our commutative $\pi$ maps.




With these polynomials defined, the proofs of our main results are detailed in \cref{sec:C-Lor-argument} and \cref{sec:spectral-indep}. They generally follow three main steps:
\begin{enumerate}
    \item For all $\sigma \in X$ such that $\codim(\sigma) = 2$, prove that the Hessian of the quadratic form
    \[
        p_\sigma(\bm{t}) = \sum_{\{x,y\} \in X_\sigma(2)} \mu(\sigma \cup \{x,y\}) \cdot \left(t_x t_y - \frac{\phi_\sigma(y,x)}{2} t_x^2 - \frac{\phi_\sigma(x,y)}{2} t_y^2\right)
    \]
    has at most one positive eigenvalue (i.e. $\lambda_2 \leq 0$). 
    Note that including $\pi$ maps has the effect of adding negative values to the diagonal of $\nabla^2p_\sigma$. Without these values, the multi-affine terms in $p_\sigma$ would need to be supported on bases of a matroid (in order to obtain $\lambda_2 \leq 0$), which would not allow for the more general applications of $\cC$-Lorentzian polynomials given in this paper. 

    By \cref{thm:conn+quad=Lor} this implies $p_\sigma$ is $\cC_\sigma$-Lorentzian for all $\sigma \in X$ for certain cones $\cC_\sigma$. That is, from eigenvalue bounds for $\sigma \in X$ with $\codim(\sigma) = 2$, we obtain eigenvalue information for all $\sigma \in X$.

    \item Bound the second eigenvalue of $P_{u,v}$, the bipartite random walk matrix between $S_u, S_v$ (in the link of $\varnothing$) where every edge $(s_u,s_v)$ is weighted by $\P_{\sigma\sim\mu}[us_u,vs_v\in \sigma]$. We do this by applying one well-chosen directional derivative $\nabla_{\bm\alpha(w)}$ for each site $w \in V \setminus \{u,v\}$ (where $\bm\alpha(w) \in \overline{C}_\varnothing$), which has the effect of marginalizing out the site $w$. Because $p_\varnothing$ is $C_\varnothing$-Lorentzian, the Hessian of what remains has at most one positive eigenvalue, which yields the desired bound.

    \item Use bounds on all $P_{u,v}$'s together with some basic facts from spectral graph theory to bound $\lambda_2(P_\varnothing)$ (and $\lambda_2(P_\sigma)$ for all $\sigma \in X$); see \cref{lem:dpartiteeigenvalues}. We remark that our approach differs substantially from the matrix trickle-down arguments of \cite{ALO22} in the sense that we don't directly bound eigenvalues of $P_\varnothing$. Instead, as described above, we first use the theory of $\cC$-Lorentzian polynomials to bound eigenvalues of the induced subgraph between every two parts of the underlying $d$-partite graph, and then we combine these bounds to obtain the desired upper bound on $\lambda_2(P_\varnothing)$.
\end{enumerate}

\paragraph{Future directions.} It can be seen that our current machinery is not enough to obtain significantly improved bounds on mixing time of the Glauber dynamics for sampling proper colorings of a graph. This is mainly because in our inductive argument we only use  second eigenvalues of $P_\tau$ for faces $\tau$ of codimension 2, and that is not enough even to obtain optimal mixing results for proper colorings of the complete graph. Going forward, we expect a more general family of commutative $\pi$ maps (which we call ``non-diagonal $\pi$ maps'') can exploit the full spectra of links of codimension 2. We hope that this will allow us to extend the current machinery to give improved mixing bounds for fundamental sampling problems.  

\subsection{Acknowledgements}
We would like to thank Prasad Tetali for pointing us to possible applications of our results to the multi-state hard core model. 
The first author acknowledges the support of the Natural Sciences and Engineering Research Council of Canada (NSERC), [funding reference number RGPIN-2023-03726]. Cette recherche a \'et\'e partiellement financ\'ee par le Conseil de recherches en sciences naturelles et en g\'enie du Canada (CRSNG), [num\'ero de r\'ef\'erence RGPIN-2023-03726].

The second 
author’s research is supported by an NSF grant CCF-2203541, a Simons Investigator Award 928589, and a
Lazowska Endowed Professorship in Computer Science \& Engineering.

\section{Preliminaries}

\subsection{Log-concave Polynomials}

We recall the definition of $\cC$-Lorentzian polynomial from \cite{LLO25}, along with the main lemma which makes such polynomials useful for bounding second eigenvalues (and for log-concavity, but we do not utilize this in this paper).

\begin{definition}[$\cC$-Lorentzian polynomials; Defn. 2.1 of \cite{BL23}] \label{def:C-Lorentzian}
    Let $p \in \R[t_1,\ldots,t_n]$ be a $d$-homogeneous polynomial for $d \geq 2$, and let $\cC \subset \R^n$ be a non-empty open convex cone. We say that $p$ is \textbf{$\cC$-Lorentzian} if
    \begin{enumerate}
        \item[(P)] $\nabla_{\bm{v}_1} \cdots \nabla_{\bm{v}_d} p(\bm{t}) > 0$ for all $\bm{v}_1,\ldots,\bm{v}_d \in \cC$, and
        \item[(Q)] the Hessian of $\nabla_{\bm{v}_1} \cdots \nabla_{\bm{v}_{d-2}} p(\bm{t})$ has at most one positive eigenvalue for all $\bm{v}_1,\ldots,\bm{v}_{d-2} \in \cC$.
    \end{enumerate}
    Note that (P) implies any Hessian in (Q) also has at least one positive eigenvalue.
    
    Additionally, any linear form for which (P) holds is defined to be $\cC$-Lorentzian of degree $d=1$, and any positive constant is defined to be $\cC$-Lorentzian of degree $d=0$. 
\end{definition}


\begin{lemma} \label{lem:main-purpose-lemma}
    Let $p$ be a $d$-homogeneous polynomial on $\R^n$, and let $\cC \subset \R^n$ be a non-empty open convex cone. If $p$ is $\cC$-Lorentzian and $\bm{u},\bm{w},\bm{v}_1,\ldots,\bm{v}_{d-2} \in \overline{\cC}$ (the closure of $\cC$), then:
    \begin{enumerate}
        \item the Hessian of $\nabla_{\bm{v}_1} \cdots \nabla_{\bm{v}_{d-2}} p$ has at most one positive eigenvalue, and
        \item the coefficients of $f(x) = p(\bm{u} \cdot x + \bm{w})$ form an ultra log-concave sequence.
    \end{enumerate}
\end{lemma}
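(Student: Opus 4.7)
The plan is to first extend properties (P) and (Q) from \cref{def:C-Lorentzian} from $\cC$ to $\overline{\cC}$ by a continuity argument, and then use this closure version together with a standard symmetric multilinear-form argument to deduce ultra log-concavity.

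For item (1), I would fix $\bm{v}_1, \ldots, \bm{v}_{d-2} \in \overline{\cC}$ and choose sequences $\bm{v}_i^{(n)} \in \cC$ with $\bm{v}_i^{(n)} \to \bm{v}_i$. Writing $H_n$ for the Hessian of $\nabla_{\bm{v}_1^{(n)}} \cdots \nabla_{\bm{v}_{d-2}^{(n)}} p$ (a constant symmetric matrix since $p$ is $d$-homogeneous), its entries are polynomial in the $\bm{v}_i^{(n)}$, so $H_n \to H$ entrywise. By (Q), each $H_n$ has $\lambda_2(H_n) \leq 0$, and since eigenvalues of symmetric matrices depend continuously on the entries, $\lambda_2(H) = \lim_n \lambda_2(H_n) \leq 0$, which is exactly the statement of item (1). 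The same continuity argument extends (P) to $\overline{\cC}$ as the non-strict inequality $\nabla_{\bm{v}_1} \cdots \nabla_{\bm{v}_d} p \geq 0$ for $\bm{v}_i \in \overline{\cC}$.

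For item (2), I would introduce the symmetric multilinear form $P$ associated to $p$ so that $p(\bm{t}) = P(\bm{t}, \ldots, \bm{t})$ ($d$ copies) and expand
$$
f(x) \;=\; p(\bm{u}x + \bm{w}) \;=\; \sum_{k=0}^d \binom{d}{k} a_k\, x^k,
\qquad
a_k \;:=\; P(\underbrace{\bm{u},\ldots,\bm{u}}_{k}, \underbrace{\bm{w},\ldots,\bm{w}}_{d-k}).
$$
Ultra log-concavity of $\{\binom{d}{k} a_k\}_{k=0}^d$ is exactly the log-concavity inequality $a_k^2 \geq a_{k-1} a_{k+1}$ for $1 \leq k \leq d-1$, together with non-negativity of each $a_k$ (immediate from the closure extension of (P) applied to $k$ copies of $\bm{u}$ and $d-k$ copies of $\bm{w}$). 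Fix such a $k$ and apply item (1) with $\bm{v}_1, \ldots, \bm{v}_{d-2}$ chosen as $k-1$ copies of $\bm{u}$ and $d-k-1$ copies of $\bm{w}$; call the resulting Hessian $H$. Using the identity $\nabla_{\bm{y}_1} \cdots \nabla_{\bm{y}_m} p(\bm{t}) = \tfrac{d!}{(d-m)!} P(\bm{y}_1, \ldots, \bm{y}_m, \bm{t}^{d-m})$, one checks $\bm{u}^\top H \bm{u} = d!\cdot a_{k+1}$, $\bm{u}^\top H \bm{w} = d!\cdot a_k$, and $\bm{w}^\top H \bm{w} = d!\cdot a_{k-1}$. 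By item (1), $H$ has at most one positive eigenvalue, and since this property is preserved under congruence, the $2 \times 2$ matrix $M = \begin{pmatrix} a_{k+1} & a_k \\ a_k & a_{k-1} \end{pmatrix}$ also has at most one positive eigenvalue. The closure extension of (P) gives $\mathrm{tr}(M) = a_{k+1} + a_{k-1} \geq 0$, which combined with ``at most one positive eigenvalue'' forces $\det(M) \leq 0$, i.e.\ $a_k^2 \geq a_{k-1} a_{k+1}$ (the borderline case $\mathrm{tr}(M) = 0$ collapses to $M = 0$, and the inequality still holds trivially).

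The main technical obstacle is the passage to the closure: properties (P) and (Q) are only stated on the open cone $\cC$, and the whole argument requires them on $\overline{\cC}$ to accommodate $\bm{u}, \bm{w} \in \overline{\cC}$. Everything hinges on two closedness facts, namely that ``at most one positive eigenvalue'' is preserved both under entrywise limits of symmetric matrices and under arbitrary congruence transformations $H \mapsto C^\top H C$ (the latter because a two-dimensional subspace on which $C^\top H C$ is positive definite must push forward injectively under $C$ to a two-dimensional positive subspace for $H$, else one finds $y \neq 0$ with $Cy = 0$ and $y^\top C^\top H C y = 0$). Both facts are standard, and once they are granted the rest of the argument is bookkeeping with the symmetric multilinear form $P$.
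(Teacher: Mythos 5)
Your proof is correct, but it diverges from the paper on part (2). For part (1) you give exactly the paper's argument: approximate the $\bm{v}_i$ by vectors in $\cC$, note that the Hessians converge entrywise, and use that ``at most one positive eigenvalue'' survives limits of symmetric matrices. For part (2), however, the paper simply cites Remark 2.4 of \cite{BL23}, whereas you supply a self-contained proof: polarize $p$, identify the coefficients $a_k = P(\bm{u}^k,\bm{w}^{d-k})$, apply part (1) to the Hessian $H$ of $\nabla_{\bm{u}}^{k-1}\nabla_{\bm{w}}^{d-k-1}p$, and push $H$ through the congruence $C^\top H C$ with $C = [\bm{u}\ \bm{w}]$ to get the $2\times 2$ matrix $M = \begin{pmatrix} a_{k+1} & a_k \\ a_k & a_{k-1}\end{pmatrix}$ with at most one positive eigenvalue; since $\mathrm{tr}(M)\geq 0$ by the closure version of (P), this forces $\det(M)\leq 0$, i.e.\ $a_k^2 \geq a_{k-1}a_{k+1}$, and (P) on $\overline{\cC}$ also gives $a_k \geq 0$. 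This is essentially the standard Lorentzian-to-ULC argument that underlies the cited remark, so what your route buys is self-containedness (and it correctly handles the non-invertible congruence via the injectivity-on-a-positive-subspace observation), at the cost of length; the paper's route buys brevity by outsourcing to \cite{BL23}. One small slip: in the borderline case $\mathrm{tr}(M)=0$ you claim $M=0$, which need not hold (one only gets $a_{k+1}=a_{k-1}=0$, while $a_k$ may be nonzero), but this does not matter since your main chain of reasoning --- $\lambda_2(M)\leq 0$ together with $\mathrm{tr}(M)\geq 0$ already yields $\det(M)\leq 0$ --- covers that case, and the desired inequality $a_k^2\geq 0 = a_{k-1}a_{k+1}$ is trivial there anyway.
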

\begin{proof}
    For (1), let $(\bm{v}_1(i),\ldots,\bm{v}_{d-2}(i))_{i=1}^\infty$ be a sequence of vectors in $\cC$ which approach $(\bm{v}_1,\ldots,\bm{v}_{d-2})$. Thus the Hessian of $\nabla_{\bm{v}_1(i)} \cdots \nabla_{\bm{v}_{d-2}(i)} p$ has at most one positive eigenvalue for all $i \geq 1$. Therefore the limit of these Hessian matrices also has at most one positive eigenvalue.

    The statement of (2) follows from Remark 2.4 of \cite{BL23}.
\end{proof}

\subsection{Linear Algebra} \label{sec:lin-alg}

We next state some eigenvalue bounds which will be useful in our analysis. The first two results are standard, whereas \cref{lem:eigenvaluebipartite} and a weaker version of \cref{lem:dpartiteeigenvalues} were both proven and used in \cite{LLO25}.

\begin{lemma}[\cite{Tro01}]\label{lem:rhonorm}
    For any matrix $A$ and any matrix norm,
\begin{equation}\label{eq:rhodef} \rho(A)=\lim_{n\to \infty} \norm{A^n}^{1/n}
\end{equation}
\end{lemma}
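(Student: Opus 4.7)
The plan is to establish the two-sided bounds $\rho(A) \le \liminf_n \norm{A^n}^{1/n}$ and $\limsup_n \norm{A^n}^{1/n} \le \rho(A)$, from which the limit exists and equals $\rho(A)$. A preliminary step reduces to a single concrete norm: since all norms on the finite-dimensional space of $n\times n$ matrices are equivalent, there exist constants $c_1,c_2>0$ with $c_1\norm{B}_{op} \le \norm{B} \le c_2\norm{B}_{op}$ for every $B$, where $\norm{\cdot}_{op}$ denotes the operator norm induced by the standard Euclidean vector norm. Taking $n$-th roots and letting $n\to\infty$ kills the constants $c_1^{1/n}, c_2^{1/n} \to 1$, so it suffices to prove the identity for $\norm{\cdot}_{op}$.

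For the lower bound I would pick an eigenvalue $\lambda$ of $A$ with $|\lambda|=\rho(A)$ and a nonzero eigenvector $v$. Since $A^n v = \lambda^n v$ and the operator norm is compatible with the Euclidean vector norm, $|\lambda|^n \norm{v} = \norm{A^n v} \le \norm{A^n}_{op}\norm{v}$, so after dividing and taking $n$-th roots one obtains $\rho(A) \le \norm{A^n}_{op}^{1/n}$ for every $n\ge 1$, hence $\rho(A) \le \liminf_n \norm{A^n}_{op}^{1/n}$.

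For the upper bound I would use the Jordan canonical form $A = SJS^{-1}$ together with the explicit formula for powers of Jordan blocks: a block $J_k(\mu)$ with $|\mu|\le \rho(A)$ has $n$-th power whose entries are bounded in absolute value by $\binom{n}{i}|\mu|^{n-i}$ for $0 \le i < k$, and this is at most $q(n)\rho(A)^n$ for some polynomial $q$ depending only on $A$. Hence $\norm{A^n}_{op} \le \norm{S}_{op}\norm{S^{-1}}_{op}\, q(n)\, \rho(A)^n$, and taking $n$-th roots gives $\limsup_n \norm{A^n}_{op}^{1/n} \le \rho(A)$.

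I do not expect a real obstacle here; this is the standard Gelfand formula, and the only subtlety is the passage from an arbitrary matrix norm to a concrete one, which is handled by finite-dimensional norm equivalence as above. A cleaner alternative that avoids Jordan form is to note that for any $\epsilon>0$ the matrix $(\rho(A)+\epsilon)^{-1}A$ has spectral radius strictly less than $1$, so its powers tend to $0$ and are in particular bounded in $\norm{\cdot}_{op}$; this yields $\norm{A^n}_{op} \le C_\epsilon (\rho(A)+\epsilon)^n$ and the same upper bound after letting $\epsilon \to 0$.
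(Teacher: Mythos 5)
Your proposal is correct: it is the standard proof of Gelfand's spectral radius formula, with the two-sided bound (eigenvector argument for $\rho(A)\le\liminf_n\norm{A^n}_{op}^{1/n}$, Jordan form or the rescaling trick for the reverse), and the reduction from an arbitrary norm to the operator norm via finite-dimensional norm equivalence is exactly the right way to handle the "any matrix norm" clause. The paper itself offers no proof of this lemma --- it is quoted as a known result with a citation --- so there is nothing to compare against; your argument is the standard one that the cited reference records. One cosmetic point: the dominant eigenvalue and its eigenvector may be complex even for a real matrix, so in the lower bound you should either take $\norm{\cdot}_{op}$ to be the induced norm on $\mathbb{C}^n$ (which agrees with the real induced norm for real matrices), or replace the eigenvector step by the inequality $\rho(A)^n=\rho(A^n)\le\norm{A^n}_{op}$, which holds for any induced (or submultiplicative) norm; either fix is one line and does not affect the structure of your argument.
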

For a square (non-symmetric) matrix $A\in \R^{n\times n}$ let $\bar{A}$ be the symmetrized version of $A$ where $\bar{A}_{i,j}=\sqrt{A_{i,j}A_{j,i}}$.  We will prove the following two lemmas.
The following lemmas are immediate consequences of the above.
\begin{lemma}[{\cite[Example 7.10.2]{Mey00}}]
    \label{lem:monotoneeigenvalues}
    For any pair of matrix $A,B\in \R_{\geq 0}^{n\times n}$ such that $A(i,j)\leq B(i,j)$ for all $i,j$ we have $\rho(A)\leq\rho(B).$ 
\end{lemma}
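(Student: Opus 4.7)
The plan is to combine Gelfand's spectral radius formula (\cref{lem:rhonorm}) with the entrywise monotonicity of matrix powers for nonnegative matrices. The statement is standard, so the proof should be short; the main content is choosing a matrix norm that plays well with both submultiplicativity (required for Gelfand) and the entrywise comparison of nonnegative matrices.

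First, I would prove by induction on $k$ that $A^k(i,j) \leq B^k(i,j)$ entrywise for every $k \geq 1$. The base case $k=1$ is the hypothesis. For the inductive step, every entry $A^{k+1}(i,j) = \sum_\ell A^k(i,\ell) A(\ell,j)$ is a sum of nonnegative products, each of which is dominated termwise by the corresponding product $B^k(i,\ell) B(\ell,j)$, giving $A^{k+1}(i,j) \leq B^{k+1}(i,j)$. In particular all iterates of $A$ and $B$ remain entrywise nonnegative.

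Second, I would pick the induced $\ell_\infty$ norm $\norm{M}_\infty = \max_i \sum_j |M(i,j)|$, which is submultiplicative (being an operator norm) and clearly monotone under entrywise comparison of nonnegative matrices: if $0 \leq X \leq Y$ entrywise, then every row sum of $X$ is bounded by the corresponding row sum of $Y$. Applying this to $X = A^k$ and $Y = B^k$ (using the previous step), I get $\norm{A^k}_\infty \leq \norm{B^k}_\infty$ for all $k \geq 1$. Taking $k$-th roots and passing to the limit using \cref{lem:rhonorm} gives
\[
    \rho(A) = \lim_{k\to\infty} \norm{A^k}_\infty^{1/k} \leq \lim_{k\to\infty} \norm{B^k}_\infty^{1/k} = \rho(B),
\]
as desired.

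There is no real obstacle here; the only point requiring a moment's thought is the choice of norm, and essentially any natural norm (induced $\ell_p$ for $p \in \{1,2,\infty\}$, the Frobenius norm, or the entrywise $\ell_1$ norm) would work equally well, so the argument is robust.
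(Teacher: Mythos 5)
Your argument is correct and follows the same route the paper intends: the paper states this lemma as an immediate consequence of \cref{lem:rhonorm} (citing Meyer's Example 7.10.2) and gives no further details, and your entrywise induction $A^k \leq B^k$ plus monotonicity of a matrix norm and Gelfand's formula is exactly that standard argument spelled out. No gaps.
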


\begin{lemma}\label{lem:rhoaverage}
    For any $A\in \R_{\geq 0}^{n\times n}$ we have
    $$ \rho(A)\geq \rho(\bar{A})$$
    where $\rho$ is the spectral radius.
\end{lemma}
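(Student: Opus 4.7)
The plan is to establish the inequality via a trace comparison: I will show that $\operatorname{tr}(\bar A^{2k}) \leq \operatorname{tr}(A^{2k})$ for every $k\geq 1$, and then extract the spectral-radius bound by taking $(2k)$-th roots and sending $k\to\infty$. The usefulness of the geometric mean $\bar A_{i,j}=\sqrt{A_{i,j}A_{j,i}}$ comes from how cleanly it pairs with a Cauchy--Schwarz inequality once the trace is expanded combinatorially as a sum over closed walks.

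First I would unfold the diagonal entry $(\bar A^{2k})_{i,i}$ as a sum over closed walks $w=(i=j_0,j_1,\ldots,j_{2k-1},j_{2k}=i)$ of length $2k$ based at $i$. Writing $w_A(w)=\prod_{t}A_{j_t,j_{t+1}}$ for the $A$-weight along $w$ and $w^R$ for the reverse walk, the identity $\bar A_{j_t,j_{t+1}}=\sqrt{A_{j_t,j_{t+1}}A_{j_{t+1},j_t}}$ gives
\[
    (\bar A^{2k})_{i,i}\;=\;\sum_{w}\sqrt{w_A(w)\cdot w_A(w^R)}.
\]
Applying Cauchy--Schwarz to this sum and using that reversal is an involution on the set of closed walks at $i$, I would conclude
\[
    (\bar A^{2k})_{i,i}\;\leq\;\sqrt{\Big(\textstyle\sum_{w}w_A(w)\Big)\Big(\textstyle\sum_{w}w_A(w^R)\Big)}\;=\;(A^{2k})_{i,i}.
\]
Summing over $i$ then yields $\operatorname{tr}(\bar A^{2k})\leq\operatorname{tr}(A^{2k})$.

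Finally I would convert this trace inequality into the desired spectral-radius bound. Because $\bar A$ is symmetric and non-negative, its eigenvalues are real, so $\operatorname{tr}(\bar A^{2k})=\sum_i\lambda_i(\bar A)^{2k}\geq\rho(\bar A)^{2k}$. For $A$ itself, non-negativity of $A$ gives $\operatorname{tr}(A^{2k})\geq 0$, together with the elementary bound $\operatorname{tr}(A^{2k})\leq n\cdot\|A^{2k}\|$ for any operator norm. Combining this with \cref{lem:rhonorm}, which says $\|A^{2k}\|^{1/(2k)}\to\rho(A)$, I would obtain
\[
    \rho(\bar A)\;\leq\;\limsup_{k\to\infty}\operatorname{tr}(\bar A^{2k})^{1/(2k)}\;\leq\;\limsup_{k\to\infty}\bigl(n\cdot\|A^{2k}\|\bigr)^{1/(2k)}\;=\;\rho(A).
\]

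The main subtlety to manage is the asymmetric treatment of the two sides of the inequality: $\bar A$ is symmetric, so its trace of even powers cleanly dominates $\rho(\bar A)^{2k}$, whereas $A$ is generally non-symmetric with possibly complex eigenvalues, which forces us to pass through an operator-norm estimate rather than a direct trace identity on the $A$-side. The choice of \emph{even} powers $2k$ is essential: it ensures each $\lambda_i(\bar A)^{2k}$ is non-negative, which is what makes the inequality $\operatorname{tr}(\bar A^{2k})\geq\rho(\bar A)^{2k}$ valid and lets the factor $n$ in the upper bound wash out when we take $(2k)$-th roots.
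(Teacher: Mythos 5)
Your proof is correct and follows essentially the same route as the paper: expand powers of $\bar{A}$ as sums over walks, use Cauchy--Schwarz on the geometric-mean walk weights $\sqrt{A_{\rightarrow Q}A_{\leftarrow Q}}$ to dominate them by the corresponding walk sums for $A$, and pass to the limit of roots via \cref{lem:rhonorm}. The only cosmetic difference is that you compare traces of even powers (closed walks) and exploit the symmetry of $\bar{A}$ on that side, whereas the paper compares Frobenius-type norms of all powers entrywise with an extra AM--GM step.
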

\begin{proof}
We choose the trace norm $\norm{A} = \sqrt{\textup{Tr}(AA^T)}=\sqrt{\sum_{i,j} A_{i,j}^2}$. 
By \cref{lem:rhonorm}, it is enough to show  for any integer $n\geq 1$,
    $$ \norm{A^n} \geq \norm{\bar{A}^n}.$$
We show that for any $n$ and $i,j$ we have
    \begin{equation}\label{eq:Anij} (A^{n}_{i,j})^2 +(A^n_{j,i})^2 \geq 2(\bar{A}^n_{i,j})^2.
    \end{equation}
    Then, the lemma immediately follows from the fact that $\norm{A^n}^2=\sum_{i,j} (A_{i,j}^n)^2 = \sum_{i,j} (A_{j,i}^n)^2$.

    It remains to show \eqref{eq:Anij}. First we notice that $A_{i,j}^n$ can be written as the sum over all walks of length $n$ from $i$ to $j$ (with non-zero diagonal entries of $A$ indicating loops):
    $$ A^n_{i,j} = \sum_{P=(u_0,\dots,u_n), i=u_0,j=u_n} \prod_{i=1}^{n} A_{u_{i-1},u_i} = \sum_{Q=(u_0,\dots,u_n), i=u_0,j=u_n} A_{\rightarrow Q}$$
    where we let $A_{\rightarrow Q}=\prod_{i=1}^{n} A_{u_{i-1},u_i}$. Similarly, 
    $$ A^n_{j,i} = \sum_{P=(u_0,\dots,u_n), i=u_0,j=u_n} \prod_{i=1}^{n} A_{u_i,u_{i-1}} = \sum_{Q=(u_0,\dots,u_n), i=u_0,j=u_n} A_{\leftarrow Q}$$
    where we let $A_{\leftarrow Q}=\prod_{i=1}^{n} A_{u_i,u_{i-1}}$.
    Since $\bar{A}$ is symmetric we have $\bar{A}_{\rightarrow Q}=\bar{A}_{\leftarrow Q}$; furthermore since it is a symmetrization of $A$ for any path $Q$ we have
    $$ \sqrt{A_{\rightarrow Q}A_{\leftarrow Q}} = \bar{A}_{\rightarrow Q}$$
    Therefore, by the Cauchy-Scwhartz inequality we can write
    $$ \left(\bar{A}^n_{i,j}\right)^2 = \left(\sum_{Q=(u_0,\dots,u_n),i=u_0,j=u_n} \sqrt{A_{\rightarrow Q}A_{\leftarrow Q}}\right)^2 \leq \sum_Q A_{\rightarrow Q}\cdot \sum_Q A_{\leftarrow Q} = A_{i,j}^n A_{j,i}^n$$
Now, by AM-GM we have
$$ A_{i,j}^nA_{j,i}^n \leq \frac12 ( (A_{i,j}^n)^2 + (A_{j,i}^n)^2)$$
as desired. 
\end{proof}

\subsection{Spectral Graph Theory}
In this subsection, we state and prove several facts about bipartite and $d$-partite graphs.

For an undirected (possibly weighted) graph $G=(V,E)$, let $P$ be the transition probability of the simple random walk on $G$. Let $\mu$ be the stationary distribution of the walk.
For $f,g:V\to\R$, let 
$$\langle f,g\rangle_\mu:=\E_{x\sim \mu} f(x)g(x)$$
Recall that $P$ is self-adjoint with respect to this inner product, i.e., $\langle Pf,g\rangle_\mu=\langle f,Pg\rangle_\mu.$ Furthermore, the all-ones vector $\bone$ is an eigenfunction with eigenvalue 1.

\begin{lemma}[Eigenvalues of Bipartite graphs]\label{lem:eigenvaluebipartite}
Let $G=(X,Y)$ be a (weighted) bipartite graph, $A\in\R^{(X+Y)\times (X+Y)}$ be the adjacency matrix of $G$, $P$ be the transition probability matrix of the simple random walk on $G$, and $D$ be the diagonal matrix of vertex degrees, i.e., $P=D^{-1}A$. Let $S, \bar{S}\in \R^{(X+Y)\times (X+Y)}$ be diagonal matrices such that $S(x,x)=s_X, S(y,y)=s_Y$ for all $x\in X, y\in Y$ and $\bar{S} = \sqrt{s_Xs_Y} \cdot I_{X+Y}$. Then the following are equivalent:
\begin{enumerate}
    \item $D^{-1/2} A D^{-1/2}\preceq S+vv^T$ for some vector $v\in \R^{X+Y}$,
    \item $D^{-1/2} A D^{-1/2}\preceq \bar{S}+ww^T$ for some vector $w \in \R^{X+Y}$,
    \item $\lambda_2(P) \leq \sqrt{s_X s_Y}$,
    \item $\lambda_2(P-S) \leq 0$.
\end{enumerate}
\end{lemma}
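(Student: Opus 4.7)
My plan is to route the four-way equivalence through three simpler ones: (1)$\Leftrightarrow$(4), (2)$\Leftrightarrow$(3), and (3)$\Leftrightarrow$(4), where bipartiteness only enters in the last. Throughout I would let $N := D^{-1/2} A D^{-1/2}$; this symmetric matrix has the same spectrum as $P$, and since $S$ is diagonal and commutes with $D^{1/2}$, the matrix $P - S$ is similar to $N - S$ (and in particular has the same eigenvalues).

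For the first two equivalences, the key ingredient is the general fact that for any real symmetric matrix $H$, there exists a vector $v$ with $H \preceq vv^T$ if and only if $\lambda_2(H) \leq 0$. The ``if'' direction is immediate from the spectral decomposition by taking $v = \sqrt{\max(\lambda_1(H), 0)}\, u_1$; the ``only if'' direction follows from Weyl's inequality $\lambda_2(H) \leq \lambda_2(vv^T) + \lambda_1(H - vv^T) \leq 0 + 0 = 0$. Applied to $H = N - S$, this yields (1) $\Leftrightarrow \lambda_2(N - S) \leq 0 \Leftrightarrow$ (4). Applied to $H = N - \bar{S}$, and using that $\bar{S} = \sqrt{s_X s_Y}\, I$ is a scalar matrix so that $\lambda_2(N - \bar{S}) = \lambda_2(P) - \sqrt{s_X s_Y}$, it yields (2) $\Leftrightarrow$ (3).

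The remaining equivalence (3) $\Leftrightarrow$ (4) is where the bipartite structure enters. Relative to the partition $X \cup Y$, I would write $N = \begin{pmatrix} 0 & M \\ M^T & 0 \end{pmatrix}$, so that the eigenvalues of $N$ are $\pm \sigma_i$ for the singular values $\sigma_1 \geq \sigma_2 \geq \cdots$ of $M$ (together with extra zeros if $|X| \neq |Y|$), and in particular $\lambda_2(N) = \sigma_2$. A Schur-complement computation on $N - S = \begin{pmatrix} -s_X I & M \\ M^T & -s_Y I \end{pmatrix}$ shows that its eigenvalues are exactly the roots $\nu$ of $(\nu + s_X)(\nu + s_Y) = \sigma^2$ as $\sigma$ ranges over the singular values of $M$, plus possibly additional eigenvalues in $\{-s_X, -s_Y\}$ from kernel contributions when $|X| \neq |Y|$. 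Solving gives $\nu_{\pm}(\sigma) = \tfrac{1}{2}\bigl(-(s_X + s_Y) \pm \sqrt{(s_X - s_Y)^2 + 4\sigma^2}\bigr)$, and a short check shows $\nu_+(\sigma) \leq 0$ iff $\sigma \leq \sqrt{s_X s_Y}$.

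The main bookkeeping obstacle will be to confirm that the second largest eigenvalue of $N - S$ is exactly $\nu_+(\sigma_2)$, and not $\nu_-(\sigma_1)$ or one of the $-s_X, -s_Y$ eigenvalues. This should follow cleanly once one notes that $\nu_+$ is monotonically increasing on $[0,\infty)$ while $\nu_-(\sigma), -s_X, -s_Y$ are all non-positive, so $\nu_+(\sigma_1)$ is the unique candidate for the largest eigenvalue and $\nu_+(\sigma_2)$ for the second largest. Granted this, $\lambda_2(N - S) \leq 0 \Leftrightarrow \sigma_2 \leq \sqrt{s_X s_Y} \Leftrightarrow \lambda_2(P) \leq \sqrt{s_X s_Y}$, completing (3) $\Leftrightarrow$ (4).
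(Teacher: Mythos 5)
Your proposal is essentially correct, and it cannot be compared step-by-step with a proof in this paper because the paper gives none: \cref{lem:eigenvaluebipartite} is imported, with proof, from \cite{LLO25}. Your route is a perfectly good self-contained one: the observation that for symmetric $H$ one has $H \preceq vv^T$ for some $v$ iff $\lambda_2(H)\leq 0$ (spectral decomposition one way, Weyl's inequality the other), applied to $N-S$ and $N-\bar S$ with $N=D^{-1/2}AD^{-1/2}$, together with the similarity of $P-S$ and $N-S$, handles $(1)\Leftrightarrow(4)$ and $(2)\Leftrightarrow(3)$; and the SVD block-diagonalization of $N-S$ into $2\times 2$ blocks with eigenvalues $\nu_\pm(\sigma_i)$ plus leftover entries $-s_X$ or $-s_Y$ is the right way to get $(3)\Leftrightarrow(4)$.

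Two points need tightening. First, the whole lemma (and your computation) implicitly assumes $s_X,s_Y\geq 0$, as in every use in the paper: you need $s_X+s_Y\geq 0$ to pass from $\nu_+(\sigma)\leq 0$ to $\sigma\leq\sqrt{s_Xs_Y}$ by squaring, and you need $s_X,s_Y\geq 0$ for the claim that $\nu_-(\sigma)$, $-s_X$, $-s_Y$ are non-positive; with negative $s$'s the equivalence is genuinely false, so state the assumption. Second, your justification of the ``bookkeeping'' step is a bit loose: the other eigenvalues being non-positive does not by itself show $\lambda_2(N-S)=\nu_+(\sigma_2)$, since $\nu_+(\sigma_2)$ may itself be negative. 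The clean fix is the inequality $\nu_+(\sigma)\geq\nu_+(0)=-\min(s_X,s_Y)\geq\max\{\nu_-(\sigma'),-s_X,-s_Y\}$, which pins down the top two eigenvalues as $\nu_+(\sigma_1)\geq\nu_+(\sigma_2)$; alternatively, for the equivalence you only need the two implications ``$\sigma_2\leq\sqrt{s_Xs_Y}$ forces every eigenvalue except possibly $\nu_+(\sigma_1)$ to be $\leq 0$'' and ``$\sigma_2>\sqrt{s_Xs_Y}$ forces the two positive eigenvalues $\nu_+(\sigma_1)\geq\nu_+(\sigma_2)>0$.'' Finally, in the degenerate case $|X|=|Y|=1$ the identity $\lambda_2(N)=\sigma_2$ fails ($\lambda_2(N)=-\sigma_1$), but both (3) and (4) hold trivially there (given $s_X,s_Y\geq 0$), so the equivalence survives; a sentence acknowledging this, as the paper does elsewhere for the two-vertex case, would make the argument airtight.
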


In what follows, given a vector $U \in \R^{X \cup Y}$ we define $u_X$ to be the vector with $X$ entries equal to those of $u$ and $Y$ entries equal to $0$ (and we define $u_Y$ similarly).

\begin{lemma} \label{lem:refined-bipartite-eigs}
    Let $P$ be a random walk matrix on a  bipartite graph $(X,Y)$ with stationary distribution $\mu$, self-adjoint with respect to $\langle \cdot, \cdot \rangle_\mu$. If $g$ is such that $ \langle g,\bone_X\rangle_\mu=\langle g, \bone_Y \rangle_\mu = 0,$ 
    then
    \[
        \langle g, Pg \rangle_\mu \leq \lambda_2(P) \cdot 2 \sqrt{\langle g_X, g_X \rangle_\mu \langle g_Y, g_Y \rangle_\mu}.
    \]
\end{lemma}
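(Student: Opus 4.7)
The plan is to exploit the bipartite structure to turn the quadratic form $\langle g, Pg\rangle_\mu$ into a single cross-term $\langle g_X, Pg_Y\rangle_\mu$, and then bound this cross-term by the standard variational characterization of $\lambda_2(P)$ applied to a one-parameter family of test vectors.

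First I would use bipartiteness to kill the diagonal terms. Since $P$ is the transition matrix of a random walk on a bipartite graph, $Pg_X$ is supported on $Y$ and $Pg_Y$ is supported on $X$; combined with the disjointness of the supports of $g_X$ and $g_Y$, this gives $\langle g_X, Pg_X\rangle_\mu = \langle g_Y, Pg_Y\rangle_\mu = 0$. Self-adjointness of $P$ with respect to $\langle \cdot,\cdot\rangle_\mu$ then yields
\[
    \langle g, Pg\rangle_\mu \;=\; \langle g_X, Pg_Y\rangle_\mu + \langle g_Y, Pg_X\rangle_\mu \;=\; 2\langle g_X, Pg_Y\rangle_\mu.
\]

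Next, for a parameter $t > 0$ I would form the test vector $h_t := g_X + t g_Y$. The hypotheses $\langle g, \bone_X\rangle_\mu = \langle g, \bone_Y\rangle_\mu = 0$ say exactly that $g_X \perp \bone$ and $g_Y \perp \bone$ in the $\mu$-inner product (since $g_X$ and $g_Y$ are supported on the respective parts), so $h_t \perp \bone$ for every $t$. Because $\bone$ is the unique top eigenvector of $P$ (using the standing connectedness assumption), the variational characterization gives
\[
    \langle h_t, Ph_t\rangle_\mu \;\leq\; \lambda_2(P) \, \langle h_t, h_t\rangle_\mu.
\]
Expanding both sides using the vanishing diagonal terms and self-adjointness produces
\[
    2t \, \langle g_X, Pg_Y\rangle_\mu \;\leq\; \lambda_2(P) \bigl(\langle g_X, g_X\rangle_\mu + t^2 \langle g_Y, g_Y\rangle_\mu\bigr).
\]

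Finally I would optimize: taking $t = \sqrt{\langle g_X,g_X\rangle_\mu/\langle g_Y,g_Y\rangle_\mu}$ (and treating the trivial cases $g_X = 0$ or $g_Y = 0$ separately) makes the right-hand side equal to $2\lambda_2(P)\sqrt{\langle g_X,g_X\rangle_\mu \langle g_Y,g_Y\rangle_\mu}$, yielding $\langle g_X, Pg_Y\rangle_\mu \leq \lambda_2(P)\sqrt{\langle g_X,g_X\rangle_\mu \langle g_Y,g_Y\rangle_\mu}$. Doubling and combining with the first step produces the claimed bound. The only delicate point is the variational step, which requires that orthogonality to $\bone$ suffices to push $h_t$ into the spectral subspace where eigenvalues are at most $\lambda_2(P)$; this is why the stronger pair of orthogonality conditions (rather than just $g \perp \bone$) is built into the hypothesis, so that $h_t \perp \bone$ holds uniformly in $t$.
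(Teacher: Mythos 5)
Your proposal is correct and is essentially the paper's own argument: the paper also tests the Rayleigh quotient against $h=\alpha g_X+\beta g_Y$ (normalized by $\alpha\beta=1$, which is just a rescaled version of your $h_t=g_X+tg_Y$), uses bipartiteness to reduce to the cross term, and optimizes the parameter by AM--GM, after handling $g_X=\bm{0}$ or $g_Y=\bm{0}$ separately. The only nitpicks are cosmetic: with your choice of $t$ the right-hand side equals $2\lambda_2(P)\langle g_X,g_X\rangle_\mu$ (the stated bound follows after dividing by $2t$), and the variational step needs no uniqueness of the top eigenvector --- orthogonality of $h_t$ to $\bone$ already suffices since $P$ is self-adjoint.
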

\begin{proof}
    First notice that if $g_X={\bf 0}$ then $\langle g,Pg\rangle_\mu=0$ since $G$ is bipartite; so the statement holds trivially. Similarly, we are done if $g_Y={\bf 0}$. So, we assume $g_X,g_Y\neq {\bf 0}$. 
    Let $h=\alpha g_X + \beta g_Y$ for some $\alpha\cdot \beta=1$ that we choose later.
    Since $G$ is bipartite and $\alpha\cdot\beta=1$, $\langle h,Ph \rangle_\mu=\langle g,Pg\rangle_\mu$.
    Since $\langle h,\bone\rangle_\mu=\langle g,\bone\rangle_\mu=0$, we have
    $$
    \langle h,Ph\rangle_\mu\leq \lambda_2(P) \cdot \langle h,h\rangle_\mu = \lambda_2(P)\cdot (\alpha^2\langle g_X,g_X\rangle_\mu + \beta^2\langle g_Y,g_Y\rangle_\mu)
    $$    
 Since $g_X,g_Y\neq {\bf 0}$, $\langle g_X,g_X\rangle_\mu,\langle g_Y,g_Y\rangle_\mu>0$.
    Letting $\alpha^2=\sqrt{\frac{\langle g_Y,g_Y\rangle_\mu}{\langle g_X,g_X\rangle_\mu}}$ and $\beta^2=1/\alpha^2$
    proves the claim.
\end{proof}


\begin{lemma} \label{lem:dpartiteeigenvalues}
    Let $G=(V,E,w)$ be a weighted $d$-partite graph with parts $T_1,\dots,T_d$ such that for $i,j\in [d]$ with $i\neq j$, and any $x\in T_i$, we have $w(x,T_j)=\sum_{y\in T_j} w(x,y)=\frac{d_w(x)}{d-1}$. Here, $d_w(x)$ is the weighted degree of $x$. Let $P$ be the simple random walk on $G$ with stationary distribution $\mu$. Then, $\mu(T_i)=1/d$ for all $i$.
    For all distinct $i,j\in [d]$, let $G_{i,j}$ be the induced bipartite graph on parts $T_i,T_j$ with corresponding random walk matrix $P_{i,j}$, and let $M(i,j) \geq 0$ be such that $\lambda_2(P_{i,j}) \leq M(i,j)$. Then
    \[
        \lambda_2(P) \leq \frac{\lambda_{\max}(M)}{d-1},
    \]
    where $M \in \R^{d \times d}$ is the real symmetric matrix with entries given by $M(i,j)$.
\end{lemma}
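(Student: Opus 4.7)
The plan is to apply the variational characterization
\[
    \lambda_2(P) = \sup\left\{\frac{\langle f, Pf\rangle_\mu}{\langle f, f\rangle_\mu} : f \perp \bone,\ f \neq 0\right\}
\]
after decomposing an arbitrary test function so as to isolate the contribution of the \emph{indicator subspace} $\mathrm{span}\{\bone_{T_1},\dots,\bone_{T_d}\}$. First note that the hypothesis $w(x,T_j)=d_w(x)/(d-1)$ forces the total degree $\sum_{x \in T_i} d_w(x)$ to be independent of $i$ (by counting the weight of edges between $T_i$ and $T_j$ in two ways), so $\mu(T_i) = 1/d$. A direct computation also gives $P\bone_{T_i} = \frac{1}{d-1}(\bone - \bone_{T_i})$, so $P$ acts on the indicator subspace as the random walk on $K_d$, with eigenvalue $-1/(d-1)$ on the mean-zero subspace.

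Now fix $f \perp \bone$ and split $f = \tilde f + h$, where $h := d\sum_i c_i \bone_{T_i}$ with $c_i := \langle f, \bone_{T_i}\rangle_\mu$ is the orthogonal projection of $f$ onto the indicator subspace (and $\sum_i c_i = 0$). Then $\tilde f_i := \tilde f\vert_{T_i}$ has $\langle \tilde f_i, \bone_{T_i}\rangle_\mu = 0$ for each $i$, and the previous identity gives $Ph = -h/(d-1)$. Self-adjointness of $P$ and $\tilde f \perp h$ kill the cross terms, so
\[
    \langle f, Pf\rangle_\mu = \langle \tilde f, P\tilde f\rangle_\mu - \tfrac{1}{d-1}\langle h, h\rangle_\mu.
\]
Because $M$ has nonnegative entries and zero diagonal we have $\lambda_{\max}(M) \geq 0$, so the $h$-term is harmless for an upper bound and the task reduces to controlling $\langle \tilde f, P\tilde f\rangle_\mu$.

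By $d$-partiteness, $\langle \tilde f, P \tilde f\rangle_\mu = \sum_{i \neq j} \langle \tilde f_i, P \tilde f_j\rangle_\mu$. For $i \neq j$, the degree-balance hypothesis provides two scaling identities: the stationary distribution $\mu_{i,j}$ of $P_{i,j}$ equals $\tfrac{d}{2}\mu$ on $T_i \cup T_j$, and $P\tilde f_j(x) = \tfrac{1}{d-1} P_{i,j}\tilde f_j(x)$ for $x \in T_i$. Combining them,
\[
    \langle \tilde f_i, P\tilde f_j\rangle_\mu = \tfrac{2}{d(d-1)}\langle \tilde f_i, P_{i,j}\tilde f_j\rangle_{\mu_{i,j}}.
\]
Since $\tilde f_i + \tilde f_j$ has zero mean on each side of $G_{i,j}$ under $\mu_{i,j}$, \cref{lem:refined-bipartite-eigs} combined with bipartiteness and self-adjointness of $P_{i,j}$ (which isolates the cross-term on the left-hand side) yields
\[
    \langle \tilde f_i, P_{i,j}\tilde f_j\rangle_{\mu_{i,j}} \leq \lambda_2(P_{i,j}) \cdot \tfrac{d}{2}\, b_i b_j \leq M(i,j)\cdot \tfrac{d}{2}\, b_i b_j, \qquad b_i := \|\tilde f_i\|_\mu.
\]
Summing over $i \neq j$ gives $\langle \tilde f, P\tilde f\rangle_\mu \leq \tfrac{1}{d-1}\, b^\top M b \leq \tfrac{\lambda_{\max}(M)}{d-1}\|b\|^2 = \tfrac{\lambda_{\max}(M)}{d-1}\langle \tilde f, \tilde f\rangle_\mu$, and plugging into the $f = \tilde f + h$ identity yields the desired bound.

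The main subtlety is bookkeeping around the indicator subspace: the bipartite restrictions $P_{i,j}$ are blind to the ``mean per part'' degrees of freedom, yet $P$ itself has a genuine eigenspace there with eigenvalue $-1/(d-1)$. The decomposition $f = \tilde f + h$ routes all such modes into the $h$-term, and the positivity $\lambda_{\max}(M) \geq 0$ is exactly what ensures this contribution does not inflate the upper bound. Beyond that, the proof is essentially a careful accounting of how the two inner products $\langle\cdot,\cdot\rangle_\mu$ and $\langle\cdot,\cdot\rangle_{\mu_{i,j}}$, and the two walks $P$ and $P_{i,j}$, rescale when restricted to functions supported on a single part.
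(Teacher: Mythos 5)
Your proof is correct and follows essentially the same route as the paper: the core is the identical combination of \cref{lem:refined-bipartite-eigs}, the rescalings $\mu_{G_{i,j}} = \tfrac{d}{2}\mu$ and $P = \tfrac{1}{d-1}P_{i,j}$ on off-diagonal blocks, and the bound $b^\top M b \leq \lambda_{\max}(M)\|b\|^2$ with $b_i = \|\tilde f_i\|_\mu$. The only (minor) difference is how the trivial modes are handled — you project a general test function onto the indicator subspace and absorb that piece using $\lambda_{\max}(M) \geq 0$, whereas the paper works directly with the second eigenfunction, notes it is orthogonal to each $\bone_{T_i}$, and dispatches the case $\lambda_2(P) < 0$ separately.
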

\begin{proof}
     For  $i\in [d]$, define 
    \[
        e_i(x) =\begin{cases} d-1 & \text{if } x\in T_i\\ -1& \text{otherwise.}\end{cases}
    \]
    Then since for any $i\neq j$ and $x\in T_i$, $P(x,T_j)=\frac{1}{d-1}$ we get that  $e_i$ is also an eigenfunction of $P$ with eigenvalue $\frac{-1}{d-1}$, i.e, $Pe_i=\frac{-e_i}{d-1}$. We call $\bone$ and $e_i$'s trivial eigenfunctions of $P$. 
    
    Let $f$ be a non-trivial eigenfunction of $P$; for any $i\in [d]$  we have 
    $$0=\langle f,\bone+e_i\rangle_\mu= d\cdot \langle f,\bone_i\rangle_\mu.$$ 
    where $\bone_i$ is the indicator vector of the set $T_i$.
    
    Note that if $\lambda_2(P)\leq 0$ we are already done since $M(i,j)\geq 0$ for all $i,j$. So we assume $\lambda_2(P)\geq 0$.
    Let $g$ be the second eigenfunction of $P$. Consider the bipartite graph $G_{i,j}$ for some $i\neq j$. Let $\mu_{G_{i,j}}$ be the stationary distribution of the random in the graph $G_{i,j}$. Notice that since $d_{G_{i,j}}(x) = \frac{1}{d-1}d_w(x)$ for all $x\in T_i\cup T_j$ we have $\mu_{G_{i,j}}(x) = \frac{d}{2}\mu(x)$ for any $x\in T_i\cup T_j$. Therefore
    \[
        \langle g,\bone_i\rangle_{\mu_{G_{i,j}}} =\langle g,\bone_j\rangle_{\mu_{G_{i,j}}}=0,
    \]
    and thus by \cref{lem:refined-bipartite-eigs}, we have
    \[
        \langle g, P_{i,j} g \rangle_{\mu_{G_{i,j}}} \leq \lambda_2(P_{i,j}) \cdot 2 \sqrt{\langle g_i, g_i \rangle_{\mu_{G_{i,j}}} \langle g_j, g_j \rangle_{\mu_{G_{i,j}}}} \leq M(i,j) \cdot 2 \sqrt{\langle g_i, g_i \rangle_{\mu_{G_{i,j}}} \langle g_j, g_j \rangle_{\mu_{G_{i,j}}}}.
    \]
    We now define
    \[
        w(i) = \sqrt{\langle g_i, g_i \rangle_\mu} = \sqrt{\sum_{x \in T_i} \mu(x) \cdot g(x)^2}
    \]
    for all $i \in [d]$. Since $\mu(x) = \frac{2}{d} \mu_{G_{i,j}}(x)$ for all $x \in T_i$, we then have
    \[
        \langle g, P_{i,j} g \rangle_{\mu_{G_{i,j}}} \leq M(i,j) \cdot d \sqrt{\langle g_i, g_i \rangle_\mu \langle g_j, g_j \rangle_\mu} = M(i,j) \cdot d \cdot w(i) \cdot w(j).
    \]
    Using the fact that for $x \in T_i$ and $y \in T_j$ we have $\mu(x) = \frac{2}{d} \mu_{G_{i,j}}(x)$ and $P(x,y) = \frac{P_{i,j}(x,y)}{d-1}$, we compute
    \[
        \langle g, Pg \rangle_\mu = \sum_{i < j} \frac{2}{d} \cdot \frac{1}{d-1} \cdot \langle g, P_{i,j}g \rangle_{\mu_{G_{i,j}}} \leq 2 \sum_{i < j} \frac{M(i,j)}{d-1} \cdot w(i) \cdot w(j) = \frac{\langle w, Mw \rangle}{d-1},
    \]
    where without loss of generality we assume that $M(i,i) = 0$ for all $i$. We then further have
    \[
        \langle g, g \rangle_\mu = \sum_x \mu(x) \cdot g(x)^2 = \sum_i \sum_{x \in T_i} \mu(x) \cdot g(x)^2 = \sum_i w(i)^2 = \langle w, w \rangle,
    \]
    which implies
    \[
        \frac{\langle g, Pg \rangle_\mu}{\langle g, g \rangle_\mu} \leq \frac{1}{d-1} \cdot \frac{\langle w, Mw \rangle}{\langle w, w \rangle}.
    \]
    Therefore $\lambda_2(P) \leq \frac{\lambda_{\max}(M)}{d-1}$ by the variational characterization of eigenvalues.
\end{proof}

\section{Random Walks on Absorbing Graphs} \label{sec:random-walks-absorbing}
Let $G=(V\cup B, E)$ be a (possibly directed) graph. We call $B$ the set of ``boundary'' vertices. Let  $W\in \R^{(V \cup B)\times (V \cup B)}$ be the transition probability matrix of a simple random walk on $G$. Without loss of generality we assume $W(u,v)>0$ for any (directed) edge $u\sim v\in E$. 
\begin{definition}[Absorbing graph]
    We say $G$ is an absorbing graph if for every vertex $v\in V$ there is a directed path with from $v$ to $B$ (with positive probability).
\end{definition}
Unless otherwise specified, all graphs we work with in this manuscript are assumed to be absorbing.
The following fact is immediate:
\begin{fact}
    If $G$ is an absorbing graph then for any $U\subseteq V$, the graph with boundary vertices $U\cup B$ and vertex set $V\setminus U$  is also absorbing.
\end{fact}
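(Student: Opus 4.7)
The plan is to unfold the definition of ``absorbing'' for the modified graph and show that the reachability certificate already supplied by $G$ suffices, possibly after a short truncation. Concretely, I need to exhibit, for each $v \in V \setminus U$, a directed walk in $G$ from $v$ to some vertex of $U \cup B$ along which every transition has positive probability under the transition matrix $W$.

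First I would fix $v \in V \setminus U$ and invoke the absorbing property of $G$ to produce a directed walk $v = v_0, v_1, \dots, v_k$ with $v_k \in B$ and $W(v_{i-1}, v_i) > 0$ for each $i = 1, \dots, k$. Then I would split into two cases. If none of $v_1, \dots, v_{k-1}$ lies in $U$, the original walk itself ends in $B \subseteq U \cup B$, so it is a valid certificate for the new graph. Otherwise, let $i^\star$ be the smallest index with $v_{i^\star} \in U$; the truncated walk $v_0, v_1, \dots, v_{i^\star}$ reaches $U \cup B$ via a vertex of $U$, and each of its transitions retains the same positive probability as in the original walk.

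There is essentially no obstacle here: the claim is a direct unpacking of definitions, and the only thing to verify is that prefixing a walk preserves the per-step positivity of $W$, which is immediate. Consequently, the graph with vertex set $V \setminus U$ and boundary set $U \cup B$ satisfies the definition of an absorbing graph, completing the proof.
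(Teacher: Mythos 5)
Your proof is correct. The only microscopic point to tidy is that a certificate for the new graph should meet the new boundary $U\cup B$ only at its final vertex; your truncation at the first index in $U$ handles $U$, and one should likewise note that the path supplied by the absorbing property of $G$ can be assumed (after truncating at its first visit to $B$) to meet $B$ only at its end. With that remark the walk you produce stays in $V\setminus U$ until it terminates in $U\cup B$, which is exactly the definition.

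Your route differs slightly in form from the paper's: the paper argues by induction on $|U|$, adding one vertex $v$ to the boundary at a time and observing that, by the induction hypothesis, every remaining vertex has a path either to $v$ or to $U\cup B$. You instead argue in one shot, taking the original path to $B$ guaranteed by absorbency of $G$ and truncating it at its first entry into $U$. The underlying idea (truncate at the first hit of the enlarged boundary) is the same, but your direct version avoids the induction bookkeeping and is, if anything, cleaner; the paper's inductive phrasing buys nothing extra here.
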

\begin{proof}
    This can be shown by induction. Suppose statement holds for $U$; consider $U'=U\cup\{v\}$. For any vertex $u\in V\setminus U'$, either $U$ has a path to $v$ or a path to $U\cup B$ (by IH).
\end{proof}
Consider a simple random walk on $G$ started at a vertex $v\in V$. Unless otherwise specified, we assume that such a walk will terminate (or be absorbed) when it hits a boundary vertex. This is despite the fact that we may have $W(b,u)>0$ for $b\in B$. 
The following fact is immediate:
\begin{fact}
    For any absorbing graph and any vertex $v\in V$, the simple random walk started at $v$ will be absorbed almost surely in a finite number of steps.
\end{fact}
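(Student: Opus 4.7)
The plan is to use the finiteness of $V$ to convert the vertex-by-vertex positive-probability-of-escape hypothesis into a uniform geometric tail bound on the absorption time, which immediately gives almost sure absorption (and in fact finite expected absorption time). The statement is standard folklore about finite absorbing Markov chains, and the proof should be short.

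First, for each $v \in V$, the absorbing hypothesis gives a directed path $v = u_0 \to u_1 \to \cdots \to u_{\ell_v} \in B$ in $G$ along which every transition has positive $W$-probability; let $p_v = \prod_{i=1}^{\ell_v} W(u_{i-1}, u_i) > 0$. Since $V$ is finite, I can set
\[
    \ell = \max_{v \in V} \ell_v, \qquad p = \min_{v \in V} p_v > 0.
\]
Because absorption is permanent (the walk terminates on hitting $B$), the probability that a walk started at any $v \in V$ is \emph{not} absorbed within $\ell$ steps is at most $1 - p_v \leq 1 - p$, regardless of the history.

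Applying this bound $k$ times using the strong Markov property at times $\ell, 2\ell, \ldots, k\ell$, the probability that a walk started at $v$ has not been absorbed after $k\ell$ steps is at most $(1-p)^k$. Letting $k \to \infty$ gives
\[
    \P[\text{walk from $v$ is never absorbed}] = 0,
\]
and moreover the absorption time $T$ satisfies $\E[T] \leq \ell \sum_{k \geq 0} (1-p)^k = \ell/p < \infty$, so absorption happens almost surely in a finite number of steps.

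There is no real obstacle here: the only slightly delicate point is making sure one chooses a common horizon $\ell$ and a common escape probability $p$ that work uniformly over all starting vertices, which is why the finiteness of $V$ is used. Everything else is a direct geometric-series argument.
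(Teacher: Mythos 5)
Your proof is correct and follows essentially the same route as the paper's: establish a uniform positive probability of absorption over a bounded horizon (the paper uses $\eps^{n-1}$ over $n-1$ steps, you use $p=\min_v p_v$ over $\ell=\max_v \ell_v$ steps) and then iterate via the Markov property to get a geometric tail bound. The additional observation that the expected absorption time is finite is a harmless bonus.
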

\begin{proof}
    Let $\eps=\min_{(u,v)\in E}W(u,v)$ be the minimum probability in $P$. Then, for any $v\in V$, the simple random walk from $v$ hits $B$ with probability $\geq \eps^{n-1}$ where $n=|V|.$ So, the probability that the walk is not absorbed by time $t=k(n-1)$ is at most $(1-\eps^{n-1})^{t/(n-1)}$ which $\to 0$ as $t\to\infty$.
\end{proof}
For a walk $Q=(v_i)_{i=0}^\ell$, we use $\W[Q]=\prod_{i=0}^{\ell-1}W(v_i,v_{i+1})$
to denote the probability of going along this walk in our random walk.
For $U\subseteq V$, and a pair of vertices $u,v \in V \cup B$ let 
$$\cW_U[u\to v] = \bigcup_{\ell\geq 0}\{(v_i)_{i=0}^\ell: v_0=u, v_\ell=v \text{ and } v_1,\dots,v_{\ell-1}\notin B\cup U\}$$ 
be the set of all walks from $u$ to $v$ (in $G$)  that do not hit any vertex of $U\cup B$ in between.
Furthermore, let
\[
    \W_U[u\to v]=\sum_{Q\in \cW_{U}[u\to v]} \W[Q]
\]
be the sum of the probabilities of all  walks in $\cW_U[u\to v]$. When $U = \{w\}$, we may also write $\cW_w[u \to v]$ and $\W_w[u \to v]$.

Let us explain some concrete examples:
\begin{itemize}
\item If $v \in U$ and $u\neq v$, then $\W_U[u\to v]$ is the {\bf probability} that, on $G$ with boundary vertices $U\cup B$,  a simple random walk started at $u$ is absorbed at $v$. So, in particular, $0\leq \W_U[u\to v]\leq 1$ because $G$ is absorbing.
\item If $v \not\in U$ then the walk is allowed to visit $v$ multiple times, and 
we may have $\W_U[u\to v]>1$. In fact, in the special case that $v=u$ we always have $\W_U(u\to u)\geq 1$.
\end{itemize}
The following fact is crucially used in our construction of $\cC$-Lorentzian polynomials. 
\begin{lemma}\label{fact:RWprop}
    Fix any set $U\subseteq V$ and any three vertices $v\in V\setminus U$ and $u,w\in V$. If $u\neq v$ and $w\neq v$ then we have
\begin{align*}
    \W_U[u \to w] &=\W_{U \cup \{v\}}[u \to w] +\W_{U \cup \{v\}}[u \to v] \cdot \W_{U}[v \to w], 
\end{align*}
and if $v = w$ then we have
\[
    \W_U[u \to v] = \W_{U \cup \{v\}}[u \to v] \cdot \W_{U}[v \to v].
\]
\end{lemma}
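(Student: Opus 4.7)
The plan is to establish both identities by a first-visit decomposition of walks at the vertex $v$, exploiting the multiplicative property $\W[Q_1 \cdot Q_2] = \W[Q_1] \cdot \W[Q_2]$ of walk probabilities under concatenation. This is essentially the strong Markov property, specialized to the collections $\cW_U[\cdot \to \cdot]$ just defined.

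For the first identity (where $u \neq v$ and $w \neq v$), I would partition $\cW_U[u \to w]$ into walks that never visit $v$ and walks that visit $v$ at least once. Since $v \notin U \cup B$ by hypothesis, the first sub-collection is precisely $\cW_{U \cup \{v\}}[u \to w]$, contributing $\W_{U \cup \{v\}}[u \to w]$ to the sum. For the second sub-collection, each walk $Q$ has a well-defined first visit to $v$; writing $Q = Q_1 \cdot Q_2$ with $Q_1$ the prefix ending at that first visit, one checks that $Q_1 \in \cW_{U \cup \{v\}}[u \to v]$ (internal vertices avoid $U \cup B$, and $v$ appears only at the endpoint) while $Q_2 \in \cW_U[v \to w]$ (internal vertices avoid $U \cup B$, with no restriction on revisiting $v$ since $v \notin U$). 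This concatenation map is a bijection onto $\cW_{U \cup \{v\}}[u \to v] \times \cW_U[v \to w]$, and summing the multiplicative probabilities yields $\W_{U \cup \{v\}}[u \to v] \cdot \W_U[v \to w]$. Adding the two contributions gives the first identity.

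For the second identity (where $w = v$), every walk in $\cW_U[u \to v]$ terminates at $v$ and hence visits $v$, so the ``no visit'' piece is empty and the same first-visit split produces a bijection onto $\cW_{U \cup \{v\}}[u \to v] \times \cW_U[v \to v]$. The edge case $u = v$ is handled uniformly: the prefix then becomes the length-zero walk at $v$ (of probability $1$), and the identity collapses to the tautology $\W_U[v \to v] = 1 \cdot \W_U[v \to v]$.

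The computation is routine; the one step that warrants care is verifying that the first-visit decomposition actually lands in the correct index sets on both sides --- the prefix must be forbidden from revisiting $v$ (forcing $U \cup \{v\}$ as its forbidden set), while the suffix must be free to revisit $v$ (retaining only $U$). The hypothesis $v \in V \setminus U$ makes both constraints consistent, so the bijection is well-defined and everything reduces to summing a factorizable product.
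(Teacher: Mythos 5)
Your main argument is correct and is essentially the paper's own proof: both decompose a walk at its first visit to $v$, identifying the prefix with an element of $\cW_{U\cup\{v\}}[u\to v]$ and the suffix with an element of $\cW_U[v\to w]$ (respectively $\cW_U[v\to v]$), and both verify the concatenation map is a bijection so that multiplicativity of $\W[\cdot]$ gives the identities. One caveat: your closing remark about the edge case $u=v$ is wrong as stated, since $\W_{U\cup\{v\}}[v\to v]$ is not $1$ in general (it counts the trivial walk plus all first-return loops at $v$ avoiding $U\cup B$), so the second identity would actually fail for $u=v$; the lemma should be (and in the paper is only ever) applied with $u\neq v$, and your proof of that case is fine.
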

\begin{proof}
We start by proving the identity when $u\neq v,w\neq v$.
We give a bijection between walks counted in the LHS and the RHS. By definition, the LHS is the  the sum probabilities of all walks in $\cW_U[u\to w]$. Let $Q\in \cW_U[u\to w]$. We show $Q$ is counted (once) in the RHS. 

{\bf Case 1: $Q\in \cW_{U\cup \{v\}}[u\to w]$.} Then $\W[Q]$ is counted  in $\W_{U\cup \{v\}}[u\to w]$. 

{\bf Case 2: $Q\notin W_{U\cup\{v\}}[u\to w]$.} In other words, $v$ is hit (not at the end) in $Q$. Consider the first time that it hits $v$. We can write $Q=Q_1Q_2$, i.e., we write $Q$ as the concatenation of two walks where
$$Q_1\in \cW_{U\cup \{v\}}[u\to v],\quad\quad Q_2\in \cW_U[v\to w],\quad \quad \W[Q]=\W[Q_1] \cdot \W[Q_2]. $$
Note that $Q_1$ is counted in $\W_{U\cup \{v\}}[u\to v]$ and $Q_2$ is counted in $\cW_{U}[v\to w].$

Conversely, we show that any walk in the RHS is counted (once) in the LHS. First, any $Q_1\in \cW_{U\cup \{v\}}[u\to v]$ and $Q_2\in Q_U[v\to w]$ can be concatenated to form $Q_1Q_2\in \cW_U[u\to w]$ which hits $v$ in the middle. $\cW_{U\cup \{v\}}[u\to w]$ makes up the rest of walks in $\cW_U[u\to w].$ 
\end{proof}

\begin{lemma}\label{lem:minwuv}
    For any pair of vertices $u,v\in V$ with $u\neq v$ we have
    $$ 0\leq \W_v[u\to v]\leq 1, \quad\quad \min\{\W_v[u\to v], \W_u[v\to u]\}<1.$$
\end{lemma}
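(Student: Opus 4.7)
The plan is to handle the two assertions separately; the first is a routine identification, while the second requires the absorbing property of $G$ in an essential way.

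For the bound $0 \leq \W_v[u \to v] \leq 1$, I would note that under the convention that random walks are stopped upon hitting a boundary vertex, $\W_v[u \to v]$ is precisely the probability that a random walk started at $u$ on $G$ with enlarged boundary $\{v\} \cup B$ is absorbed at $v$ (as opposed to absorbed at some vertex of $B$). The fact following the definition of absorbing graph guarantees that enlarging the boundary preserves the absorbing property, and then the second fact in Section 3 guarantees that such walks terminate almost surely. The absorption probabilities at the distinct boundary vertices $\{v\} \cup B$ therefore partition a total mass of $1$, giving $\W_v[u \to v] \in [0,1]$.

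For the strict inequality $\min\{\W_v[u \to v], \W_u[v \to u]\} < 1$, I would argue by contradiction. Suppose both probabilities equal $1$, and consider a random walk $(X_t)_{t \geq 0}$ on $G$ started at $u$. Inductively define stopping times $\tau_0 = 0$; for $k \geq 1$, let $\tau_{2k-1}$ be the first time $t > \tau_{2k-2}$ with $X_t \in \{v\} \cup B$, and let $\tau_{2k}$ be the first time $t > \tau_{2k-1}$ with $X_t \in \{u\} \cup B$. The hypothesis $\W_v[u \to v] = 1$ together with the strong Markov property at $\tau_{2k-2}$ (on the event $X_{\tau_{2k-2}} = u$) forces $\tau_{2k-1} < \infty$ and $X_{\tau_{2k-1}} = v$ almost surely. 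Symmetrically the hypothesis $\W_u[v \to u] = 1$ gives $\tau_{2k} < \infty$ and $X_{\tau_{2k}} = u$ almost surely. Iterating, the walk never visits $B$, contradicting the fact that a walk on the absorbing graph $G$ hits $B$ almost surely.

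The only mild subtlety I anticipate is ensuring that each $\tau_k$ is almost surely finite before applying the strong Markov property at the next stage. This is exactly the content of enlarging the boundary while preserving absorption: the graph with boundary $\{v\} \cup B$ is absorbing, so a walk started at $u$ reaches $\{v\}\cup B$ in finite time a.s., and symmetrically with the roles of $u$ and $v$ swapped. With this in hand the induction closes and the contradiction is immediate.
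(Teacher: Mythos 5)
Your proof is correct, but your argument for the strict inequality takes a genuinely different route from the paper's. For the first assertion you do essentially what the paper does: identify $\W_v[u\to v]$ as the probability that the walk started at $u$ hits $v$ before $B$ (using that the graph with boundary $\{v\}\cup B$ is still absorbing, so these first-hitting events have total mass $1$). For the second assertion, the paper argues combinatorially and directly: among all directed paths from $u$ or from $v$ to $B$, take a shortest one, say from $u$; it cannot pass through $v$ (else there would be a strictly shorter path from $v$ to $B$), so the walk from $u$ escapes to $B$ while avoiding $v$ with positive probability, giving $\W_v[u\to v]<1$ outright. Your argument instead proceeds by contradiction via the strong Markov property: if both probabilities were $1$, the walk would a.s.\ alternate between $u$ and $v$ forever without touching $B$, contradicting a.s.\ absorption. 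This is valid — your handling of the finiteness of the stopping times via the enlarged-boundary absorbing property is exactly the point that needs care, and you address it — but it is a softer argument: it requires the strong Markov property and a countable intersection of almost-sure events, and it only shows that the two probabilities cannot both equal $1$, whereas the paper's shortest-path argument is purely combinatorial, pinpoints which of the two quantities is strictly below $1$ (the one whose start vertex is closer to $B$), and implicitly yields a quantitative escape probability.
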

\begin{proof}
    First observe that for any $u,v\in V$, $\W_v[u\to v]$ is the probability that a simple random walk started at $u$ hits $v$ before any boundary vertex in $B$. 
    This proves the first assertion of the lemma.
    To prove the second assertion, we use that $G$ is absorbing. 
    Therefore, there is a directed path from each $u$ and $v$ to $B$. Let $Q$ be the {\bf shortest} path from any of $u$ or $v$ to $B$; say it is from $u$. But then, this path does not visit $v$ (otherwise there is an even shorter path). This implies, that the walk started at $u$ has a positive probability of hitting $B$ before hitting $v$; i.e., $\W_v[u\to v]<1$ as desired. 
\end{proof}
We now define $\cW[u]$
to be the set of walks which start at $u\in V$, and end the first time a vertex in $B$ is reached. 
Since we assume $G$ is absorbing, for any $u\in V$ we have
$$ \sum_{Q\in \cW[u]} \W[Q]=1;$$
i.e., the LHS defines a probability distribution. So, we abuse notation by also letting $\cW[u]$ denote the probability distribution over all walks in this set.
For a walk $Q=(v_i)_{i=0}^\ell$, we let 
$$ d(Q):=|\{v_i:0\leq i\leq \ell\}|$$
to denote the number of distinct vertices visited in $Q$.

\begin{lemma} \label{lem:expected-length-bound}
    For any $u \in V$ we have
    \[
        \sum_{v \in V : v \neq u} \W_v[u \to v] = \E_{Q\sim \cW[u]}\left[d(Q) - 2\right] \leq \E_{Q\sim \cW[u]}[|Q|-1],
    \]
    where $|Q|$ is the length (number of edges) of $Q$.
\end{lemma}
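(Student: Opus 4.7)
The plan is to interpret each term $\W_v[u \to v]$ probabilistically and then apply linearity of expectation. Recall that $\W_v[u\to v] = \sum_{Q' \in \cW_{\{v\}}[u \to v]} \W[Q']$, where walks in $\cW_{\{v\}}[u \to v]$ are exactly those that go from $u$ to $v$ without hitting $\{v\} \cup B$ in the middle. I claim this equals $\P_{Q \sim \cW[u]}[v \in V(Q)]$, where $V(Q)$ denotes the set of distinct vertices visited by $Q$. For this, I would set up a bijection: every walk $Q \sim \cW[u]$ that visits $v$ has a unique minimal prefix $Q_1 \in \cW_{\{v\}}[u \to v]$ (ending at the first time $v$ is reached), and then $Q$ is obtained by concatenating $Q_1$ with a walk $Q_2$ starting at $v$ and ending the first time it hits $B$. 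Since $G$ is absorbing, the walks from $v$ absorb almost surely, so $\sum \W[Q_2] = 1$. This gives
\[
    \P_{Q \sim \cW[u]}[v \in V(Q)] = \sum_{Q_1 \in \cW_{\{v\}}[u \to v]} \W[Q_1] \cdot 1 = \W_v[u \to v].
\]

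Summing over $v \in V \setminus \{u\}$ and applying linearity of expectation,
\[
    \sum_{v \in V : v \neq u} \W_v[u \to v] = \E_{Q \sim \cW[u]}\bigl[|V(Q) \cap (V \setminus \{u\})|\bigr].
\]
Now I need to identify the right-hand side with $\E[d(Q) - 2]$. Since $Q \sim \cW[u]$ terminates at the first boundary vertex it hits, $V(Q)$ contains exactly one vertex of $B$ (the terminal one) and always contains $u \in V$. Thus $|V(Q) \cap (V \setminus \{u\})| = d(Q) - 2$, which yields the equality.

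For the inequality $\E[d(Q) - 2] \leq \E[|Q| - 1]$, I would simply observe that a walk $Q$ of length $|Q|$ (counting edges) traverses exactly $|Q|+1$ vertex positions, so $d(Q) \leq |Q|+1$ pointwise, and monotonicity of expectation finishes the proof.

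There is no real obstacle here: the identification of $\W_v[u \to v]$ with the hit probability is essentially definitional once one notices that the tail of the walk contributes total weight $1$ by absorption. The only point to be careful about is distinguishing $u$ and the terminal boundary vertex from the vertices counted by the sum, which is what produces the ``$-2$'' in $d(Q) - 2$; the first-hit prefix decomposition handles any potential worry about walks that revisit $v$.
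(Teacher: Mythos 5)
Your proposal is correct and follows essentially the same route as the paper: you interpret $\W_v[u\to v]$ as the probability that the walk $Q\sim\cW[u]$ visits $v$, via the first-hit prefix decomposition $Q=Q_1Q_2$ with the absorbed tail contributing total weight $1$, and then sum over $v$ (linearity of expectation versus the paper's explicit swap of the order of summation is only a cosmetic difference). The bookkeeping for the ``$-2$'' (excluding $u$ and the unique terminal boundary vertex) and the pointwise bound $d(Q)\leq |Q|+1$ match the paper's argument.
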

\begin{proof}
    %
    First, we write, 
    \begin{align*}
         \E_{Q \sim \cW[u]}\left[d(Q)-2\right] &= \E_{Q\sim \cW[u]} \sum_{v\in Q: v\notin B \cup \{u\}} \W[Q_1Q_2],
    \end{align*}
    where the inner sum in the RHS is over the {\bf first occurrence} of any vertex $v\in Q$ (other than $u$ and the last vertex), $Q_1$ is the (prefix) of $Q$ which is a walk from $u\to v$ and $Q_2$ is the rest which is a walk from $v\to B$. Note that inner-sum in the RHS ranges over exactly $d(Q)-2$ vertices as we are excluding $u$ and the last vertex of $Q$.
    Changing the order of sums we can write,
    \begin{align*}
         \E_{Q \sim W[u]}[d(Q)-2] &= \sum_{v\neq u}\sum_{Q\in \cW_v[u\to v]}\W[Q]\cdot \sum_{Q\in \cW[v]} \W[Q]\\
        &=\sum_{v\neq u}\sum_{Q\in \cW_v[u\to v]}\W[Q]\cdot 1 = \sum_{v\neq u} \W_v[u\to v].
    \end{align*}
    where the second to last identity uses that the set of walks in $\cW[v]$ form a probability distribution.
    This proves the first identity in lemma's statement. The inequality simply follows from $d(Q)\leq |Q|+1$, i.e. the number of distinct vertices of $Q$ is at most one more than the number of edges of $Q$.
\end{proof}

\section{Local to Global Argument via $\cC$-Lorentzian Polynomials} \label{sec:C-Lor-argument}
In this section, we prove a sort of edge trickle-down theorem for partite simplicial complexes, based on the $\cC$-Lorentzian polynomial machinery. We first recall some notation from the previous sections which we will utilize here.

Throughout, we fix a partite simplicial complex $(X,\mu)$ with parts given by $S_v = \{vs_v \in X(1)\}$ for $v \in V$, and an absorbing (possibly directed) graph $G = (V \cup B, E)$ with boundary vertices $B$ which has no edges between boundary vertices. We further associate to $G$ a random walk matrix $W \in \R^{(V \cup B) \times (V \cup B)}$, where $W(u,v) > 0$ if and only if $u\sim v \in E$. Given any $\sigma \in X(d-2)$, we also denote by $P_\sigma$ the transition probability matrix on the skeleton of $X_\sigma$, weighted according to $\mu$.

Further, given $u,v \in V$ we let $G_{u,v}$ be the bipartite graph on $S_u \cup S_v$ with adjacency matrix $A_{u,v}$ with edge weights $A_{u,v}(us_u,vs_v)=\P_{\sigma\sim\mu}[us_u,vs_v\in \sigma]$, diagonal weighted degree matrix $D_{u,v}$, and transition probability matrix $P_{u,v}$ of the simple random walk on $G_{u,v}$.

The main goal of this section is then to prove the following result.

\begin{theorem}\label{thm:mainlorentzian}
    Suppose $(X,\mu)$ is connected, and suppose for all $\sigma\in X(d-2)$ we have 
    \begin{equation} \label{eq:quad-condition}
        \lambda_2(P_\sigma)\leq \sqrt{W(u,v) \cdot W(v,u)},
    \end{equation}
    where $u,v$ are the two distinct vertices in $V$ for which $u,v \not\in V(\sigma)$. Then we have
    \[
        \lambda_2(P_{u,v})\leq\sqrt{\W_u[v\to u] \cdot \W_v[u\to v]}
    \]
    for all distinct $u,v \in V$.
\end{theorem}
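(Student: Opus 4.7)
The plan is to carry out the three-step $\cC$-Lorentzian polynomial strategy outlined in the technical overview. For $x \in X(1)$ write $u_x \in V$ for the unique site with $x \in S_{u_x}$, and for every $\sigma \in X$ and every pair of vertices $y, x \in X_\sigma(1)$ with $u_y \ne u_x$ define
\[
    \phi_\sigma(y,x) := \W_{V(\sigma) \cup \{u_x\}}[u_y \to u_x],
\]
the probability that a random walk on $G$ started at $u_y$ reaches $u_x$ without first touching any site of $V(\sigma)$ or the boundary set $B$. The concatenation identity of \cref{fact:RWprop} then yields
\[
    \phi_\sigma(z,y) = \phi_{\sigma \cup \{x\}}(z,y) + \phi_{\sigma \cup \{y\}}(z,x) \cdot \phi_\sigma(x,y)
\]
for all $z \in X_{\sigma \cup \{x,y\}}(1)$, together with the symmetric identity obtained by swapping $x$ and $y$. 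A short direct calculation shows these two identities are equivalent to the commutativity
\[
    \pi_{\sigma\cup\{x\}+y}\bigl(\pi_{\sigma+x}(\bm{t})\bigr) = \pi_{\sigma\cup\{y\}+x}\bigl(\pi_{\sigma+y}(\bm{t})\bigr)
\]
of the linear maps $\pi_{\sigma+x}(\bm{t})_y = t_y - \phi_\sigma(y,x)\,t_x$. With these commutative $\pi$-maps in hand, define polynomials $p_\sigma$ via the recursion stated in the technical overview.

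For the base case, fix $\sigma \in X(d-2)$ with ``free'' sites $u,v$. The recursion unfolds to the explicit quadratic in $(t_x)_{x \in S_u \cup S_v}$ displayed in the overview, and after conjugating by $D_\sigma^{1/2}$ its Hessian becomes $D_\sigma^{-1/2} A_\sigma D_\sigma^{-1/2} - S$, where $S$ is diagonal with $S(x,x) = \phi_\sigma(y,x)$ for $x \in S_u$ and $S(y,y) = \phi_\sigma(x,y)$ for $y \in S_v$. Since $V(\sigma) \cup \{u_y\} = V \setminus \{u\}$, the walk defining $\phi_\sigma(y,x)$ is absorbed on its very first step, giving $\phi_\sigma(y,x) = W(v,u)$ and $\phi_\sigma(x,y) = W(u,v)$. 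The hypothesis \eqref{eq:quad-condition} combined with the equivalence of items (3) and (4) in \cref{lem:eigenvaluebipartite} (taking $s_X s_Y = W(u,v)\,W(v,u)$) then forces $\lambda_2 \leq 0$ for this Hessian, i.e., $p_\sigma$ satisfies condition (Q) of $\cC_\sigma$-Lorentzianness on an appropriate cone. The $\cC$-Lorentzian trickle-down theorem developed earlier in the paper---with the commutativity of the $\pi$-maps as the crucial input---then propagates $\cC$-Lorentzianness from all codimension-$2$ faces down to $p_\varnothing$ itself.

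Finally, for each $w \in V \setminus \{u,v\}$ pick a vector $\bm{\alpha}(w) \in \overline{\cC_\varnothing}$ supported on $S_w$ (the natural choice being the marginal $\mu$-weights on $S_w$), and set
\[
    q(\bm{t}) := \nabla_{\bm{\alpha}(w_1)} \cdots \nabla_{\bm{\alpha}(w_{d-2})}\,p_\varnothing(\bm{t}),
\]
a quadratic form in $(t_x)_{x \in S_u \cup S_v}$. By \cref{lem:main-purpose-lemma}(1) its Hessian has at most one positive eigenvalue. The key computation---and the main obstacle of the argument---is to verify that this Hessian equals, up to rescaling by the weighted degrees of $G_{u,v}$, the matrix $D_{u,v}^{-1/2} A_{u,v} D_{u,v}^{-1/2} - S'$ with $S'$ diagonal equal to $\W_u[v \to u]$ on $S_u$ and $\W_v[u \to v]$ on $S_v$. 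This identification holds because the $\phi$-values at $\sigma = \varnothing$ are already the full hitting probabilities $\phi_\varnothing(y,x) = \W_{\{u_x\}}[u_y \to u_x]$ that appear on the right-hand side, and because each directional derivative $\nabla_{\bm\alpha(w)}$ correctly marginalizes out the site $w$ through the concatenation identity of \cref{fact:RWprop}. Once the identification is in hand, the equivalence of items (3) and (4) in \cref{lem:eigenvaluebipartite} (now with $s_X = \W_u[v \to u]$ and $s_Y = \W_v[u \to v]$) converts the Hessian bound into $\lambda_2(P_{u,v}) \leq \sqrt{\W_u[v \to u] \cdot \W_v[u \to v]}$, completing the proof.
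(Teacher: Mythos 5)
Your first two steps track the paper exactly: the same hitting-probability $\phi_\sigma(y,x)=\W_{V(\sigma)\cup\{u_x\}}[u_y\to u_x]$, commutativity via \cref{fact:RWprop}, the codimension-$2$ Hessian $A_\sigma - M_\sigma$ with $\phi_\sigma(y,x)=W(v,u)$ (since every non-boundary site other than $u$ lies in $V(\sigma)$), and \cref{lem:eigenvaluebipartite} plus \cref{thm:conn+quad=Lor} to get that $p_\varnothing$ is $\cC_\varnothing$-Lorentzian. The gap is in your final step, and it is exactly the step you flag as ``the key computation'' without carrying it out. Your proposed direction vectors --- $\bm\alpha(w)$ supported on $S_w$ with entries the marginal $\mu$-weights --- do not in general lie in $\overline{\cC_\varnothing}$: every element of that closure is $\bm\pi$-non-negative, but applying $\pi_{\varnothing+ws_w}$ to a vector supported on $S_w$ gives $\pi_{\varnothing+ws_w}(\bm v)_{zs_z} = 0 - \W_{\{w\}}[z\to w]\,v_{ws_w} < 0$ whenever some other site $z$ can reach $w$ before the boundary. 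So \cref{lem:main-purpose-lemma} cannot be invoked for these directions. Moreover, even ignoring cone membership, the claimed Hessian identification fails with this choice: iterating $\nabla_{\bm\alpha(w)}$ with $\mu$-marginal weights produces off-diagonal entries of the form $\sum_{\text{spins}}\prod_i \mu_{w_i}(s_{w_i})\,\mu(\{us_u,vs_v,w_1s_{w_1},\dots\})$, a product-measure-weighted sum, not the true pairwise marginal $A_{u,v}(us_u,vs_v)=\P_{\sigma\sim\mu}[us_u,vs_v\in\sigma]$, and the diagonal does not come out as $\W_u[v\to u]$ times the weighted degree.

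The missing idea is the specific choice $\alpha(w)_{zs_z} = \W_\varnothing[z\to w]$ for \emph{all} sites $z$ (not just $z=w$). These vectors satisfy $\pi_{\varnothing+\tau}(\bm\alpha(w)) = \bm\alpha_\tau(w)$ if $w\notin V(\tau)$ and $\bm 0$ otherwise (\cref{lem:alpha-pi}, again via \cref{fact:RWprop}), hence are $\bm\pi$-non-negative, lie in $\overline{\cC_\varnothing}$, and also give non-emptiness of the cones $\cC_\sigma$ (\cref{cor:cones-nonempty}), which you implicitly need in order to apply \cref{thm:conn+quad=Lor}. Crucially, because $\pi_{\sigma\cup\{ws_w\}}(\bm\alpha(w))=\bm 0$, the directional derivative $\nabla_{\bm\alpha(w)}$ applied to $p_\sigma(\pi_\sigma(\bm t))$ collapses to a sum over the spins of $w$ alone, each with the \emph{spin-independent} coefficient $\W_{V(\sigma)}[w\to w]$ (\cref{lem:dir-deriv-expression}); this is what makes the derivative an exact marginalization of site $w$ and what produces the Hessian $A_{u,v}-D_{u,v}M_{u,v}$ up to a single positive scalar, with the diagonal $\W_u[v\to u]$, $\W_v[u\to v]$ you want. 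Without this choice (or a substitute with the same interaction with the $\pi$ maps), the final bipartite eigenvalue bound does not follow.
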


In the following sections we will prove the above theorem. To do this, we will at various points recall results and notation from \cite[Sec. 4]{LLO25}.

\subsection{Polynomials and $\pi$ Maps for Partite Complexes} \label{sec:polys-pi-maps}

In this section, we recursively define polynomials associated our given simplicial complex $X$, which we will later prove are $\cC$-Lorentzian with respect to particular cones. To construct these polynomials, we need to define a family commutative $\pi$ maps. We first recall the definition of commutativity.

\begin{definition}[Commutative $\pi$ maps] \label{def:pi-maps}
    Let $\bm\pi = \{\pi_{\sigma+\tau}: \R^{X_\sigma(1)} \to \R^{X_{\sigma \cup \tau}(1)}\}_{\sigma \in X,\tau \in X_\sigma}$ be a family of linear maps having the following property:
    \[
        \text{$\pi_{\sigma+\tau}(\bm{t})_H = t_H - L\big((t_x)_{x \in \tau}\big)$, for all  $H \in X_{\sigma \cup \tau}(1)$, where $L = L_{\sigma,\tau,H}$ is some linear functional.}
    \]
    We will also use the shorthands $\pi_{\sigma+x} := \pi_{\sigma+\{x\}}$ and $\pi_\sigma := \pi_{\varnothing+\sigma}$.
    We say that $\bm\pi$ is \textbf{commutative} if for all $\sigma \in X$, $\tau \in X_\sigma$, $\omega \in X_{\sigma \cup \tau}$, we have:
    \[
        \text{$\pi_{\sigma+(\tau \cup \omega)}(\bm{t}) = \pi_{(\sigma \cup \tau) + \omega}(\pi_{\sigma+\tau}(\bm{t}))$.}
    \]
\end{definition}

We now construct the commutative family of $\pi$ maps, which will lead to the polynomials we will use in our analysis. To do this, we will utilize the random walk notation of \cref{sec:random-walks-absorbing}. Given $\sigma \in X$ and $\{us_u,vs_v\} \in X_\sigma(2)$, we define
\[
    \pi_{\sigma + us_u}(\bm{t})_{vs_v} = t_{vs_v} - \W_{V(\sigma) \cup \{u\}}[v\to u] \cdot t_{us_u}.
\]
We now prove that these $\pi$ maps are commutative.

\begin{lemma} For all $\sigma \in X$ and $\{us_u,vs_v,ws_w\} \in X_\sigma(3)$, we have
\[
    \pi_{(\sigma \cup \{us_u\})+vs_v}(\pi_{\sigma + us_u}(\bm{t}))_{ws_w} = \pi_{(\sigma \cup \{vj\})+us_u}(\pi_{\sigma + vs_v}(\bm{t}))_{ws_w}.
\]
\end{lemma}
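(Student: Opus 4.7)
The plan is to verify the identity by directly expanding both sides using the definition of the $\pi$ maps and then matching coefficients of $t_{us_u}$, $t_{vs_v}$, and $t_{ws_w}$ separately. Writing $U = V(\sigma)$ for brevity, applying $\pi_{\sigma+us_u}$ to $\bm t$ yields
\begin{align*}
    \pi_{\sigma+us_u}(\bm t)_{vs_v} &= t_{vs_v} - \W_{U\cup\{u\}}[v\to u]\cdot t_{us_u},\\
    \pi_{\sigma+us_u}(\bm t)_{ws_w} &= t_{ws_w} - \W_{U\cup\{u\}}[w\to u]\cdot t_{us_u},
\end{align*}
and then applying $\pi_{(\sigma\cup\{us_u\})+vs_v}$ gives the left-hand side. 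A symmetric computation, swapping the roles of $u$ and $v$, gives the right-hand side.

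The coefficient of $t_{ws_w}$ on each side is plainly $1$. Matching the coefficient of $t_{vs_v}$ reduces the desired equality to
\[
    \W_{U\cup\{v\}}[w\to v] = \W_{U\cup\{u,v\}}[w\to v] + \W_{U\cup\{u,v\}}[w\to u]\cdot \W_{U\cup\{v\}}[u\to v],
\]
which is an instance of \cref{fact:RWprop} with forbidden set $U\cup\{v\}$ and distinguished vertex $u$: every walk from $w$ to $v$ that avoids $U\cup\{v\}$ in its interior either also avoids $u$, or visits $u$ and we split at the first such visit. By the mirror-image argument (swap $u$ and $v$), matching the coefficient of $t_{us_u}$ reduces to another instance of \cref{fact:RWprop}, this time with forbidden set $U\cup\{u\}$ and distinguished vertex $v$, applied to walks from $w$ to $u$. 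Together, these two applications establish commutativity.

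The only thing to be careful about is verifying the hypotheses of \cref{fact:RWprop} in each invocation, namely that the distinguished vertex lies outside the forbidden set and is distinct from both endpoints of the walk. Since $\{us_u,vs_v,ws_w\}\in X_\sigma(3)$, the three sites $u,v,w$ are pairwise distinct elements of $V\setminus V(\sigma)$, which makes all side conditions automatic. There is no substantive obstacle beyond bookkeeping: the entire content of the commutativity lemma is packaged into the walk-splitting identity of \cref{fact:RWprop}, and this is exactly why defining $\pi_{\sigma+us_u}(\bm t)_{vs_v}$ via the hitting probability $\W_{V(\sigma)\cup\{u\}}[v\to u]$ is the right choice.
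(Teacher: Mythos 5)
Your proof is correct and follows essentially the same route as the paper's: expand via the definition of the $\pi$ maps and invoke the walk-splitting identity of \cref{fact:RWprop}. The only cosmetic difference is that the paper expands just the left-hand side, applies \cref{fact:RWprop} once with forbidden set $V(\sigma)\cup\{u\}$ to reach an expression symmetric in $u$ and $v$, whereas you expand both sides and match coefficients, using the two mirror-image instances of the same identity.
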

\begin{proof}
We compute
\begin{align*}
    &\pi_{(\sigma \cup \{us_u\})+vs_v}(\pi_{\sigma + us_u}(\bm{t}))_{ws_w} \\
        &= \pi_{\sigma+us_u}(\bm{t})_{ws_w} - \W_{V(\sigma) \cup \{u,v\}}[w \to v] \cdot \pi_{\sigma+us_u}(\bm{t})_{vs_v} \\
        &= (t_{ws_w} - \W_{V(\sigma) \cup \{u\}}[w \to u] \cdot t_{us_u}) - \W_{V(\sigma) \cup \{u,v\}}[w \to v] \cdot (t_{vs_v} - \W_{V(\sigma) \cup \{u\}}[v \to u] \cdot t_{us_u}) \\
        &= t_{ws_w} - \W_{V(\sigma) \cup \{u,v\}}[w \to v] \cdot t_{vs_v} - (\W_{V(\sigma) \cup \{u\}}[w \to u] - \W_{V(\sigma) \cup \{u,v\}}[w \to v] \cdot \W_{V(\sigma) \cup \{u\}}[v \to u]) \cdot t_{us_u}\\
        &= t_{ws_w} - \W_{V(\sigma) \cup \{u,v\}}[w \to v] \cdot t_{vs_v} - \W_{V(\sigma) \cup \{u,v\}}[w \to u] \cdot t_{us_u} \\
        &= \pi_{(\sigma \cup \{vj\})+ui}(\pi_{\sigma + vj}(\bm{t}))_{ws_w}.
\end{align*}
The second to last equality follows by \cref{fact:RWprop} for $U=V(\sigma) \cup \{u\}$.
\end{proof}

With this, we can now construct the polynomials associated to $X$. Note that the following definition applies to any simplicial complex, but here we will apply it specifically to partite complexes.

\begin{definition} \label{def:polys-X}
    We inductively define a family of polynomials $p_\sigma(\bm{t})$ on $\R^{X_\sigma(1)}$ of degree $d-|\sigma|$ associated to every $\sigma \in X$. If $\sigma$ is a facet of $X$ (so that $|\sigma| = d$) we define $p_\sigma(\bm{t}) = \mu(\sigma)$, and if $|\sigma| < d$ we define $p_\sigma(\bm{t})$ via
    \[
        (d-|\sigma|) \cdot p_\sigma(\bm{t}) = \sum_{x\in X_\sigma(1)} t_x \cdot p_{\sigma \cup \{x\}}(\pi_{\sigma+x}(\bm{t})).
    \]
    In particular, if $|\sigma| = d-1$ then $p_\sigma$ is the linear form given by
    \begin{equation}\label{eq:linearpcase}
        p_\sigma(\bm{t}) = \sum_{x \in X_\sigma(1)} t_x \cdot \mu(\sigma \cup \{x\}).
    \end{equation}
\end{definition}

In what follows we will prove that these polynomials $p_\sigma$ are Lorentzian with respect to certain associated cones. We next describe these cones.

\subsection{Cones of Positive Vectors} \label{sec:positive-cones}

In this section we define cones $\cC_\sigma$ such that $p_\sigma$ is $\cC_\sigma$-Lorentzian under condition \eqref{eq:quad-condition}. The cone $\cC_\sigma$ is called the cone of positive vectors, and we recall its definition now.

\begin{definition}[Cone of positive vectors] \label{def:pos-vector}
    Given $\sigma \in X$, we say that a vector $\bm{v} \in \R^{X_\sigma(1)}_{>0}$ is \textbf{$\bm\pi$-positive}  if $\pi_{\sigma+\tau}(\bm{v}) \in \R^{X_{\sigma \cup \tau}(1)}_{>0}$ for all $\tau \in X_\sigma$. We also say $\bm{v}$ is \textbf{$\bm\pi$-non-negative} if the inequalities are not strict.  We further define $\cC_\sigma$ to be the set of all $\bm\pi$-positive vectors. By definition of $\bm\pi$, it follows that $\cC_\sigma$ is an open convex (possibly empty) cone.
\end{definition}

The main goal of this section is then to prove some basic properties of the cones of non-negative vectors for the polynomials and $\pi$ maps defined above. To do this, we first define some special vectors in those cones called the $\alpha$ vectors. For $\sigma \in X$ and $w \in V \setminus V(\sigma)$, we define $\bm\alpha_\sigma(w) \in \R^{X_\sigma(1)}$ via
\[
    \alpha_\sigma(w)_{us_u} = \W_{V(\sigma)}[u \to w] \qquad \text{for all} ~ us_u \in X_\sigma(1).
\]
The following is the main result we will use regarding the $\alpha$ vectors.

\begin{lemma} \label{lem:alpha-pi}
    For any $\sigma \in X$, $\tau \in X_\sigma$, and $v \in V \setminus V(\sigma)$, we have
    \[
        \pi_{\sigma + \tau}(\bm\alpha_\sigma(v)) = \begin{cases}
            \bm\alpha_{\sigma \cup \tau}(v), & v \not\in V(\tau) \\
            \bm{0}, & v \in V(\tau).
        \end{cases}
    \]
    Furthermore, $\bm\alpha_\sigma(v)$ is $\bm\pi$-non-negative.
\end{lemma}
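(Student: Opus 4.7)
The plan is to induct on $|\tau|$, using commutativity of $\bm\pi$ to reduce the problem to the base case $|\tau|=1$. The non-negativity claim will follow immediately from the identity, since the image will always be either $\bm 0$ or a vector of hitting probabilities in $[0,1]$.

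For the base case $\tau = \{us_u\}$, I would pick an arbitrary $ws_w \in X_{\sigma \cup \tau}(1)$ (so in particular $w \neq u$) and unfold the definitions of $\pi_{\sigma+us_u}$ and $\bm\alpha_\sigma$ to obtain
\[
    \pi_{\sigma+us_u}(\bm\alpha_\sigma(v))_{ws_w} = \W_{V(\sigma)}[w \to v] - \W_{V(\sigma) \cup \{u\}}[w \to u] \cdot \W_{V(\sigma)}[u \to v].
\]
If $v \neq u$, the first identity of \cref{fact:RWprop} (with absorbing set $V(\sigma)$ and splitting vertex $u$) decomposes $\W_{V(\sigma)}[w \to v]$ into walks that avoid $u$ plus walks that pass through $u$; the latter contribution cancels the subtracted term exactly, leaving $\W_{V(\sigma) \cup \{u\}}[w \to v] = \alpha_{\sigma \cup \tau}(v)_{ws_w}$. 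If $v = u$, the second identity of \cref{fact:RWprop} (its $v = w$ branch) factors $\W_{V(\sigma)}[w \to u]$ as $\W_{V(\sigma) \cup \{u\}}[w \to u] \cdot \W_{V(\sigma)}[u \to u]$, so the entry vanishes.

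For the inductive step with $|\tau| \geq 2$, pick any $x = us_u \in \tau$ and write $\tau = \{x\} \cup \tau'$. Commutativity gives
\[
    \pi_{\sigma+\tau}(\bm\alpha_\sigma(v)) = \pi_{(\sigma \cup \{x\})+\tau'}\bigl(\pi_{\sigma+x}(\bm\alpha_\sigma(v))\bigr).
\]
By the base case, the inner vector is $\bm\alpha_{\sigma \cup \{x\}}(v)$ when $v \neq u$ and $\bm 0$ when $v = u$. In the latter case linearity preserves $\bm 0$; in the former, the induction hypothesis applied at the face $\sigma \cup \{x\}$ with $\tau'$ returns either $\bm\alpha_{\sigma \cup \tau}(v)$ or $\bm 0$ depending on whether $v \in V(\tau')$. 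These sub-cases assemble into the stated dichotomy, and non-negativity is then automatic.

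The main obstacle will be keeping the indices in \cref{fact:RWprop} straight: the fact's splitting vertex corresponds to our site $u$, its endpoint corresponds to our target site $v$, and one must separately invoke the $v = w$ branch to handle the diagonal case $v = u$ where the target of $\bm\alpha$ lies in the part being contracted. Once the correspondence between the two notations is pinned down, everything reduces to a direct algebraic substitution.
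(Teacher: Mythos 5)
Your proposal is correct and matches the paper's proof essentially verbatim: the paper likewise reduces by "a standard induction argument" to the single-element case $\tau=\{us_u\}$, expands each coordinate, and invokes the two branches of \cref{fact:RWprop} (splitting at $u$, target $v$) to get either $\bm\alpha_{\sigma\cup\tau}(v)$ or $\bm 0$, with non-negativity falling out of the identity. The only nitpick is your parenthetical that the surviving entries are "probabilities in $[0,1]$" — entries such as $\W_{V(\sigma\cup\tau)}[v\to v]$ can exceed $1$ since $v$ is not absorbing there — but only non-negativity is needed, so the argument stands.
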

\begin{proof}
    By a standard induction argument, we only need to prove this for $\tau = \{us_u\}$. For all $zs_z \in X_{\sigma \cup \{us_u\}}(1)$, we compute
    \begin{align*}
        \pi_{\sigma+us_u}(\bm\alpha_\sigma(v))_{zs_z} &= \alpha_\sigma(v)_{zs_z} - \W_{V(\sigma)\cup\{u\}}[z \to u] \cdot \alpha_\sigma(v)_{us_u} \\
            &= \W_{V(\sigma)}[z \to v]
             - \W_{V(\sigma)\cup \{u\}}[z \to u] \cdot \W_{V(\sigma)}[u\to v] \\
            &\underset{\text{\cref{fact:RWprop}}}{=} \begin{cases}
                \W_{V(\sigma)\cup\{u\}}[z\to v], & u \neq v \\
                0, & u = v
            \end{cases} \\
            &= \begin{cases}
                \alpha_{\sigma \cup \tau}(v)_{zs_z}, & v \not\in V(\tau) \\
                0, & v \in V(\tau),
            \end{cases}
    \end{align*}
    as desired.
\end{proof}

This gives non-emptiness of the cones of non-negative vectors as an immediate corollary.

\begin{corollary} \label{cor:cones-nonempty}
    For all non-facets $\sigma \in X$, the cone $C_\sigma$ is non-empty.
\end{corollary}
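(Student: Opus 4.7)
The plan is to exhibit an explicit $\bm\pi$-positive vector, namely the sum $\bm\beta_\sigma := \sum_{v \in V \setminus V(\sigma)} \bm\alpha_\sigma(v)$, and to verify its two defining properties using \cref{lem:alpha-pi} and linearity of the $\pi$ maps.

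First I would argue that $\bm\beta_\sigma \in \R^{X_\sigma(1)}_{>0}$. For any $us_u \in X_\sigma(1)$ we have $u \in V \setminus V(\sigma)$, so the term with $v = u$ in the defining sum contributes
\[
    \alpha_\sigma(u)_{us_u} = \W_{V(\sigma)}[u \to u] \geq 1,
\]
where the inequality holds because the length-zero walk at $u$ is always in $\cW_{V(\sigma)}[u \to u]$. All other summands are non-negative probabilities (or sums thereof), so $\bm\beta_\sigma$ is strictly positive coordinatewise. Note that $V \setminus V(\sigma)$ is non-empty precisely because $\sigma$ is a non-facet, so the sum defining $\bm\beta_\sigma$ is non-trivial.

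Next I would verify the pushforward condition. Fix any $\tau \in X_\sigma$. Since $\pi_{\sigma+\tau}$ is linear, \cref{lem:alpha-pi} gives
\[
    \pi_{\sigma+\tau}(\bm\beta_\sigma) = \sum_{v \in V \setminus V(\sigma)} \pi_{\sigma+\tau}(\bm\alpha_\sigma(v)) = \sum_{v \in V \setminus V(\sigma \cup \tau)} \bm\alpha_{\sigma \cup \tau}(v),
\]
since the terms with $v \in V(\tau)$ vanish. If $\sigma \cup \tau$ is a facet of $X$, then $X_{\sigma \cup \tau}(1) = \varnothing$ and the required coordinatewise positivity holds vacuously. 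Otherwise, the same argument as in the previous paragraph (applied to $\sigma \cup \tau$ in place of $\sigma$) shows that this sum is strictly positive on each coordinate $zs_z \in X_{\sigma \cup \tau}(1)$, because $z \in V \setminus V(\sigma \cup \tau)$ and the $v = z$ term contributes $\W_{V(\sigma \cup \tau)}[z \to z] \geq 1$.

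Combining these two observations shows $\bm\beta_\sigma \in \cC_\sigma$, so $\cC_\sigma$ is non-empty. I do not expect any real obstacle here: the absorbing graph structure ensures all hitting probabilities are well-defined and the length-zero walk contribution makes the positivity on the ``diagonal'' coordinates automatic. The only thing to watch for is the edge case where $\sigma \cup \tau$ becomes a facet, which is handled vacuously by the empty index set.
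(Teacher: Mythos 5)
Your proposal is correct and follows essentially the same route as the paper: both exhibit the candidate vector $\sum_{v \in V \setminus V(\sigma)} \bm\alpha_\sigma(v)$, push it forward with \cref{lem:alpha-pi}, and get strict positivity from the diagonal term $\W_{V(\sigma \cup \tau)}[u \to u] \geq 1$. Your explicit handling of the facet edge case and the separate check of positivity before applying $\pi_{\sigma+\tau}$ are minor elaborations of the same argument.
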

\begin{proof}
    For all $\sigma \in X$ and $us_u \in X_\sigma(1)$, we have $\alpha_\sigma(u)_{u_s} = \W_{V(\sigma)}[u \to u] \geq 1$. Thus for all $\tau \in X_\sigma$ and $us_u \in X_{\sigma \cup \tau}(1)$ we have
    \[
        \pi_{\sigma+\tau}\left(\sum_{v \in V \setminus V(\sigma)} \bm\alpha_\sigma(v)\right)_{us_u} \underset{\text{\cref{lem:alpha-pi}}}{=} \sum_{v \in V \setminus V(\sigma \cup \tau)} \alpha_{\sigma \cup \tau}(v)_{us_u} \geq \alpha_{\sigma \cup \tau}(u)_{us_u} \geq 1 > 0,
    \]
    and therefore $\sum_{v \in V \setminus V(\sigma)} \bm\alpha_\sigma(v) \in C_\sigma$.
\end{proof}

\subsection{Polynomials $p_\sigma$ are $\cC_\sigma$-Lorentzian}

We are now ready to prove that the polynomials $p_\sigma$ are $\cC_\sigma$-Lorentzian under condition \eqref{eq:quad-condition}. To do this, we need the following theorem, along with a lemma which puts into context the main required condition of the theorem.

\begin{theorem}[Connected + Quadratics $\implies$ Lorentzian] \label{thm:conn+quad=Lor}
    Suppose $(X,\mu)$ is a $d$-dimensional connected simplicial complex and $\cC_\varnothing$ is non-empty. If the Hessian of $p_\sigma$ has at most one positive eigenvalue for all $\sigma \in X(d-2)$, then $p_\sigma$ is $\cC_\sigma$-Lorentzian for all $\sigma \in X$.
\end{theorem}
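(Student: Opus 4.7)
The plan is to induct on the codimension $k = d - |\sigma|$, establishing that $p_\sigma$ is $\cC_\sigma$-Lorentzian for every $\sigma \in X$. The base cases are direct: for $k = 0$, $p_\sigma = \mu(\sigma) > 0$ is trivially Lorentzian; for $k = 1$, $p_\sigma$ is the positive linear form in \eqref{eq:linearpcase}, and (P) reduces to $\sum_x v_x \mu(\sigma \cup \{x\}) > 0$, which holds since $\cC_\sigma \subseteq \R^{X_\sigma(1)}_{>0}$ and at least one of the $\mu(\sigma \cup \{x\})$ is positive. For $k = 2$, (Q) is precisely the hypothesis; for (P) I would pick any $\bm{v}_0 \in \cC_\sigma$ (which exists by \cref{cor:cones-nonempty}), verify $p_\sigma(\bm{v}_0) > 0$ by unwinding the recursion and using connectivity to rule out a vanishing sum, and then invoke the standard reverse Cauchy--Schwarz for quadratic forms of Lorentzian signature: once the Hessian $H$ has exactly one positive eigenvalue with positive direction $\bm{v}_0$, one has $\bm{u}^T H \bm{v} > 0$ on the connected component of $\{p_\sigma > 0\}$ containing $\bm{v}_0$, and the open convex cone $\cC_\sigma$ lies entirely in that component.

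For the inductive step I assume $p_{\sigma \cup \{x\}}$ is $\cC_{\sigma \cup \{x\}}$-Lorentzian for every $x \in X_\sigma(1)$. The key compatibility between the cones is that $\pi_{\sigma+x}(\overline{\cC_\sigma}) \subseteq \overline{\cC_{\sigma \cup \{x\}}}$. This follows from the definition of $\bm\pi$-positivity combined with commutativity of the $\pi$-maps: for any $\bm{v} \in \overline{\cC_\sigma}$ and $\tau' \in X_{\sigma \cup \{x\}}$, one has $\pi_{(\sigma \cup \{x\}) + \tau'}(\pi_{\sigma+x}(\bm{v})) = \pi_{\sigma + (\{x\} \cup \tau')}(\bm{v}) \geq 0$. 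This lets me push directional derivatives in $\cC_\sigma$-directions through the recursion while staying inside the appropriate closure cones on the other side.

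For (P) I differentiate the identity $(d-|\sigma|)\, p_\sigma(\bm{t}) = \sum_x t_x \cdot p_{\sigma \cup \{x\}}(\pi_{\sigma+x}(\bm{t}))$ exactly $k$ times in directions $\bm{v}_1, \ldots, \bm{v}_k \in \cC_\sigma$. By the product and chain rules the output is a sum over $x$ and $i$ of products of $(v_i)_x$ factors with full-order directional derivatives of $p_{\sigma \cup \{x\}}$ in directions $\pi_{\sigma+x}(\bm{v}_j) \in \overline{\cC_{\sigma \cup \{x\}}}$; each such summand is non-negative by property (P) of $p_{\sigma \cup \{x\}}$ continuously extended to cone closures, and connectivity of $X$ prevents the whole sum from vanishing. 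For (Q), I differentiate the same identity $k-2$ times and obtain a quadratic built from two kinds of pieces: rank-$2$ ``hyperbolic plane'' contributions of signature $(1,1)$ coming from each $t_x \cdot (\text{linear in } \bm{t})$ factor, and pullbacks $Q_x \circ \pi_{\sigma+x}$ of quadratic forms $Q_x$ on the links (each with at most one positive eigenvalue by the inductive hypothesis). I then need to show that these pieces combine into a single Hessian with at most one positive eigenvalue.

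The main obstacle is precisely this last Hessian analysis in (Q): naively, a sum of matrices each with at most one positive eigenvalue can easily have many positive eigenvalues, so the proof must exploit the specific algebraic structure of the sum rather than just the signature of each summand. The critical ingredient is the commutativity of the $\pi$-maps (encoded in the random-walk identity of \cref{fact:RWprop}), which should align the cross-terms between different $x$'s so that the aggregate Hessian retains only a rank-one positive part --- analogously to Oppenheim's classical trickle-down computation, but carried out in the presence of the $\pi$-maps. The plan is to handle this by a Schur-complement/Cauchy-interlacing argument that isolates the $t_x$-rank-one corrections from the link Hessians and uses the commutativity identity to cancel the extra positive directions; this is where the careful design of \cref{sec:polys-pi-maps} earns its keep.
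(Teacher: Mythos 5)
You should first note that this paper does not actually prove \cref{thm:conn+quad=Lor} itself; it is recalled from the machinery of \cite{LLO25}, so your attempt must stand on its own. Your induction skeleton (base cases $k\le 2$, the cone compatibility $\pi_{\sigma+x}(\overline{\cC_\sigma})\subseteq\overline{\cC_{\sigma\cup\{x\}}}$ via commutativity, differentiating the recursion for (P)) is the right shape, but there is a genuine gap exactly where the theorem has its content: the inductive step for property (Q). You correctly observe that the Hessian of $\nabla_{\bm v_1}\cdots\nabla_{\bm v_{k-2}}p_\sigma$ is a combination of pulled-back link Hessians plus corrections, and that a sum of matrices each with at most one positive eigenvalue can have many; but you then defer the resolution to an unspecified ``Schur-complement/Cauchy-interlacing argument'' in which commutativity of the $\pi$ maps ``cancels the extra positive directions.'' That is not an argument, and as stated it cannot be completed, for a concrete reason: commutativity of the $\pi$ maps, non-emptiness of the cones, and the Lorentzian property of every $p_{\sigma\cup\{x\}}$ all hold verbatim when the skeleton of $X_\sigma$ is disconnected (take $X$ to be a disjoint union of two pure $d$-dimensional complexes, each individually satisfying the quadratic hypothesis), yet then $p_\sigma$ splits over the components, its relevant Hessians are block diagonal with a positive eigenvalue in each block, and the conclusion is false. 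Hence connectivity of the skeleton of every link must enter precisely in the (Q) step --- via a Perron--Frobenius/irreducibility-type argument on the Hessian, as in Oppenheim's trickle-down and in the proof in \cite{LLO25} --- whereas in your outline connectivity is invoked only to rule out vanishing in (P). Relatedly, the hypothesis controls only the coordinate quadratics $p_\sigma$ with $\sigma\in X(d-2)$; upgrading this to arbitrary directional derivatives $\nabla_{\bm v_1}\cdots\nabla_{\bm v_{d-2}}p_\varnothing$ with $\bm v_i\in\cC_\varnothing$ is exactly the missing local-to-global step, and it is not a formal consequence of the decomposition you describe.

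Two smaller points. For (P) you need strict positivity, and ``connectivity prevents the whole sum from vanishing'' needs an actual argument: after differentiating, the directions $\pi_{\sigma+x}(\bm v_j)$ lie only in the closures of the link cones, where the inductive hypothesis gives merely $\ge 0$, so some additional reasoning (e.g.\ tracking a facet with $\mu(\tau)>0$ and showing its contribution is strictly positive, or deducing (P) jointly with (Q)) is required. Likewise, your degree-2 base case asserts the reverse Cauchy--Schwarz inequality on the component of $\{p_\sigma>0\}$ containing $\cC_\sigma$; that is fine in principle, but it presupposes $p_\sigma>0$ on $\cC_\sigma$, which is again the strict positivity you have not yet established.
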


\begin{lemma} \label{lem:quadratic-entries}
    Fix $\sigma \in X(d-2)$, and let $A$ be the adjacency matrix of the $1$-skeleton of $X_\sigma$, weighted according to $\mu$. That is, for all $\{x,y\} \in X_\sigma(2)$ the entry $A(x,y)$ is $\mu(\sigma \cup \{x,y\})$. Further, let $D$ be the diagonal matrix with entries given by
    \[
        D(x,x) = \sum_{y \in X_{\sigma \cup \{x\}}(1)} A(x,y) \cdot \partial_{t_x} \pi_{\sigma+x}(\bm{t})_y = \nabla\left[A \cdot \pi_{\sigma+x}(\bm{t})\right]_x.
    \]
    Then the Hessian of $p_\sigma$ is $A+D$.
\end{lemma}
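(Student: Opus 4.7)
The plan is a short, direct unwinding of \cref{def:polys-X}. Since $|\sigma|=d-2$, the recursion gives
\[
    2 p_\sigma(\bm t) \;=\; \sum_{x \in X_\sigma(1)} t_x \cdot p_{\sigma \cup \{x\}}(\pi_{\sigma+x}(\bm t)),
\]
and each $p_{\sigma \cup \{x\}}$ has $|\sigma \cup \{x\}| = d-1$, so by \eqref{eq:linearpcase} it is the linear form $\bm s \mapsto \sum_y \mu(\sigma \cup \{x,y\}) \cdot s_y = \sum_y A(x,y)\cdot s_y$, where the sum ranges over $y \in X_{\sigma \cup \{x\}}(1)$ (equivalently, over all $y \in X_\sigma(1)$, since $A(x,y)=0$ when $\{x,y\}\notin X_\sigma(2)$). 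Substituting $\bm s = \pi_{\sigma+x}(\bm t)$, I would then split $\pi_{\sigma+x}(\bm t)_y$ into its $t_y$ part and its $t_x$ part: because $\pi_{\sigma+x}(\bm t)_y$ is linear in $\bm t$ with $t_y$-coefficient $1$ (and no other $t_z$ for $z \notin \{x,y\}$), we have
\[
    \pi_{\sigma+x}(\bm t)_y \;=\; t_y \;+\; t_x \cdot \partial_{t_x}\pi_{\sigma+x}(\bm t)_y.
\]

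Substituting this decomposition into the formula for $2p_\sigma$, the first piece becomes $\sum_x t_x \sum_y A(x,y)\, t_y = \bm t^T A \bm t$, using symmetry of $A$ coming from the symmetry of $\mu(\sigma \cup \{x,y\})$ in $x,y$. The second piece becomes
\[
    \sum_x t_x^2 \cdot \sum_y A(x,y) \cdot \partial_{t_x}\pi_{\sigma+x}(\bm t)_y \;=\; \sum_x t_x^2 \cdot D(x,x),
\]
by the definition of $D$ in the lemma statement. Thus $2p_\sigma(\bm t) = \bm t^T A \bm t + \bm t^T D \bm t$, and since $A$ is symmetric and $D$ is diagonal, taking the Hessian of both sides gives $2 \nabla^2 p_\sigma = 2A + 2D$, so $\nabla^2 p_\sigma = A + D$ as claimed.

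There is no real obstacle here; the lemma is essentially a definition-chase exploiting that $p_{\sigma \cup \{x\}}$ is linear when $\codim(\sigma)=2$. The only minor point to be careful about is the indexing (that $y$ ranges over $X_{\sigma \cup \{x\}}(1)$, which is consistent with $A$ having its $(x,x)$-entry equal to $0$ and with $\pi_{\sigma+x}$ being defined as a map out of $\R^{X_\sigma(1)}$), and noting that the alternate expression $D(x,x) = \nabla[A \cdot \pi_{\sigma+x}(\bm t)]_x$ in the lemma is just a compact rewriting of $\sum_y A(x,y)\,\partial_{t_x}\pi_{\sigma+x}(\bm t)_y$.
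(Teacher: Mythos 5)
Your proof is correct, and it is exactly the intended argument: the paper states this lemma without proof (recalling it from the machinery of \cite{LLO25}), since for $\codim(\sigma)=2$ the recursion in \cref{def:polys-X} combined with the linear case \eqref{eq:linearpcase} reduces the claim to precisely the definition-chase you carry out, with the off-diagonal part giving $A$ and the $t_x$-component of $\pi_{\sigma+x}(\bm{t})_y$ giving the diagonal matrix $D$.
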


We now apply \cref{thm:conn+quad=Lor} to prove that $p_\sigma$ is $C_\sigma$-Lorentzian for all $\sigma \in X$. To do this, we just need to prove that the Hessian of $p_\sigma$ has at most one positive eigenvalue for all $\sigma \in X(d-2)$.

\begin{theorem} \label{thm:partite-Lorentzian}
    Let $(X, \mu)$ be connected, and suppose that for every $\sigma\in X(d-2)$ we have 
    \[
        \lambda_2(P_\sigma) \leq \sqrt{W(u,v) \cdot W(v,u)},
    \]
    where $u,v$ are the two distinct vertices such that $u,v \not\in V(\sigma)$. Then $p_\sigma$ is $C_\sigma$-Lorentzian for all $\sigma \in X$.
\end{theorem}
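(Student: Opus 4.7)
The plan is to invoke Theorem \ref{thm:conn+quad=Lor}. Connectivity of $(X,\mu)$ is given by hypothesis, and Corollary \ref{cor:cones-nonempty} provides $\cC_\varnothing$ non-empty, so the task reduces to verifying the remaining input: that for every $\sigma \in X(d-2)$ the Hessian of $p_\sigma$ has at most one positive eigenvalue.

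Fix such a $\sigma$, and let $u,v$ be the two sites with $u,v \notin V(\sigma)$. By Lemma \ref{lem:quadratic-entries}, the Hessian of $p_\sigma$ has the form $A + D$, where $A$ is the weighted bipartite adjacency of the $1$-skeleton of $X_\sigma$ with entries $A(us_u,vs_v) = \mu(\sigma \cup \{us_u,vs_v\})$ and $D$ is diagonal. Unpacking $\pi_{\sigma + us_u}(\bm t)_{vs_v} = t_{vs_v} - \W_{V(\sigma)\cup\{u\}}[v\to u]\cdot t_{us_u}$ and observing that this hitting probability is independent of the particular spin $s_v$, a direct computation yields $D = -\tilde S\cdot D_A$, where $D_A$ is the weighted degree matrix of $A$ and $\tilde S$ is the diagonal matrix with constant entry $p := \W_{V(\sigma)\cup\{u\}}[v\to u]$ on the $S_u$-block and $q := \W_{V(\sigma)\cup\{v\}}[u\to v]$ on the $S_v$-block.

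Next I would convert the eigenvalue condition on $A+D$ into one on $P_\sigma$. Since
\[
    A + D = D_A^{1/2}\bigl(D_A^{-1/2} A D_A^{-1/2} - \tilde S\bigr) D_A^{1/2},
\]
Sylvester's law of inertia gives $\lambda_2(A+D) \leq 0$ iff $\lambda_2(D_A^{-1/2} A D_A^{-1/2} - \tilde S) \leq 0$. Because $\tilde S$ is diagonal it commutes with $D_A^{1/2}$, so $D_A^{-1/2} A D_A^{-1/2} - \tilde S$ is similar to $P_\sigma - \tilde S$ via conjugation by $D_A^{1/2}$, and the condition becomes $\lambda_2(P_\sigma - \tilde S) \leq 0$. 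The equivalence (3)$\Leftrightarrow$(4) of Lemma \ref{lem:eigenvaluebipartite}, applied with $s_X = p$ and $s_Y = q$, converts this in turn to $\lambda_2(P_\sigma) \leq \sqrt{p\,q}$.

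To close, observe that the single-edge walk $(v,u)$ belongs to $\cW_{V(\sigma)\cup\{u\}}[v\to u]$ (it has no intermediate vertices, so the avoidance condition is vacuous), hence $p \geq W(v,u)$; symmetrically $q \geq W(u,v)$. Combined with the hypothesis $\lambda_2(P_\sigma) \leq \sqrt{W(u,v)\,W(v,u)}$ this yields $\lambda_2(P_\sigma) \leq \sqrt{p\,q}$, completing the verification and hence the proof via Theorem \ref{thm:conn+quad=Lor}. The conceptual hinge, rather than any genuine obstacle, is the identity $D = -\tilde S\,D_A$: the $\pi$-maps contribute a negative diagonal correction of exactly the right magnitude for the bipartite second-eigenvalue hypothesis to upgrade to the quadratic Lorentzian condition; everything else reduces to the standard linear algebra of Section \ref{sec:lin-alg} and the explicit form of $\pi_{\sigma+us_u}$.
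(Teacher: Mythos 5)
Your proposal is correct and follows essentially the same route as the paper: reduce via \cref{thm:conn+quad=Lor} and \cref{cor:cones-nonempty} to the codimension-2 Hessian condition, compute the Hessian through \cref{lem:quadratic-entries} and the explicit $\pi$ maps as $A_\sigma$ minus a diagonal correction proportional to the hitting probabilities, and convert the hypothesis using \cref{lem:eigenvaluebipartite}. The only (harmless) deviation is at the last step: the paper notes that for codimension-2 $\sigma$ every neighbor of $v$ lies in $V(\sigma)\cup\{u\}\cup B$, so $\W_{V(\sigma)\cup\{u\}}[v\to u]=W(v,u)$ exactly, whereas you only use the one-step lower bound $\W_{V(\sigma)\cup\{u\}}[v\to u]\geq W(v,u)$, which suffices since the inequality points the right way.
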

\begin{proof}
    By \cref{thm:conn+quad=Lor} and the fact that $C_\sigma$ is non-empty by \cref{cor:cones-nonempty}, we just need to prove that the Hessian of $p_\sigma$ has at most one positive eigenvalue for all $\sigma \in X(d-2)$. Given $\sigma \in X(d-2)$, to compute the Hessian of $p_\sigma$, we use \cref{lem:quadratic-entries}. Let $u,v$ be the two vertices such that $u,v \not\in V(\sigma)$. For $\{us_u,vs_v\} \in X_\sigma(2)$ compute
    \[
        \partial_{t_{us_u}} \pi_{\sigma + us_u}(\bm{t})_{vs_v} = -\W_{V(\sigma) \cup \{u\}}[v \to u] = -W(v,u),
    \]
    where the last equality follows from the fact that every vertex adjacent to $v$ is contained in $V(\sigma) \cup \{u\} \cup B$. Now let $A_\sigma$ be the adjacency matrix of the skeleton of $X_\sigma$, weighted according to $\mu$, and let $M_\sigma$ be the diagonal matrix with entries given by
    \[
        M_\sigma(us_u,us_u) = W(v,u) \sum_{vs_v \in X_{\sigma \cup \{us_u\}}(1)} A_\sigma(us_u,vs_v) \qquad \text{for all} ~ us_u \in X_\sigma(1)
    \]
    and
    \[
        M_\sigma(vs_v,vs_v) = W(u,v) \sum_{us_u \in X_{\sigma \cup \{vs_v\}}(1)} A_\sigma(vs_v,us_u) \qquad \text{for all} ~ vs_v \in X_\sigma(1),
    \]
    By \cref{lem:quadratic-entries}, the Hessian of $p_\sigma$ is equal to $A_\sigma - M_\sigma$. Letting $D_\sigma$ be the diagonal degree matrix weighted according to $\mu$, we have
    \[
        M_\sigma = \begin{bmatrix}
            W(v,u) \cdot I_{us_u \in X_\sigma(1)} & 0 \\
            0 & W(u,v) \cdot I_{vs_u \in X_\sigma(1)}
        \end{bmatrix} \cdot D_\sigma,
    \]
    where $I_S$ is the identity matrix indexed by the elements of $S$. Now by assumption we have
    \[
        \lambda_2(D_\sigma^{-1}A_\sigma) = \lambda_2(P_\sigma) \leq \sqrt{W(v,u) \cdot W(u,v)},
    \]
    and thus by \cref{lem:eigenvaluebipartite} we have that
    \[
        D_\sigma^{-1/2} A_\sigma D_\sigma^{-1/2} - \begin{bmatrix}
            W(v,u) \cdot I_{\{us_u \in X_\sigma(1)\}} & 0 \\
            0 & W(u,v) \cdot I_{\{vs_v \in X_\sigma(1)\}}
        \end{bmatrix}
        = D_\sigma^{-1/2} \left(A_\sigma - M_\sigma\right) D_\sigma^{-1/2}
    \]
    has at most one positive eigenvalue. Therefore the Hessian $A_\sigma - M_\sigma$ has at most one positive eigenvalue.
\end{proof}

\subsection{Special Directional Derivatives of $p_\sigma$} \label{sec:dir-derivs}

In this section, we will prove a nice formula for directional derivatives in $\bm\alpha(v)$ directions (for various $v \in V$) applied to $p_\sigma$. This will produce a quadratic polynomial with Hessian related to the random walk matrix $P_{u,v}$. Since the $\bm\alpha(v)$ vectors are $\bm\pi$-non-negative, this Hessian will have one positive eigenvalue, allowing us to analyze the eigenvalues of $P_{u,v}$. First we recall a few lemmas which will make out computations much simpler.

\begin{lemma}[cf. Lem.~3.3 of \cite{BL21}] \label{lem:derivs-expression}
    Given $\sigma \in X$ and $x \in X_\sigma(1)$, we have
    \[
        \partial_{t_x} p_\sigma(\bm{t}) = p_{\sigma \cup \{x\}}(\pi_{\sigma+x}(\bm{t})).
    \]
    Further, for $x,y \in X_\sigma(1)$ with $x \neq y$, we have
    \[
        \partial_{t_y} \partial_{t_x} p_\sigma(\bm{t}) = \begin{cases}
            p_{\sigma \cup \{x,y\}}(\pi_{\sigma+\{x,y\}}(\bm{t})), & \{x,y\} \in X_\sigma(2) \\
            0, & \text{otherwise}.
        \end{cases}
    \]
\end{lemma}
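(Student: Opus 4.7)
The plan is to induct on $d - |\sigma|$, treating the first identity as the main statement and deriving the second identity from it. The base case $|\sigma| \in \{d, d-1\}$ is either vacuous or follows directly from the linear formula in \eqref{eq:linearpcase}: for $|\sigma| = d-1$, differentiating $p_\sigma(\bm t) = \sum_x t_x \mu(\sigma \cup \{x\})$ in $t_x$ yields $\mu(\sigma \cup \{x\}) = p_{\sigma \cup \{x\}}$, which agrees with $p_{\sigma\cup\{x\}}(\pi_{\sigma+x}(\bm t))$ since $p_{\sigma\cup\{x\}}$ is a constant.

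For the inductive step with $k := d - |\sigma| \ge 2$, apply $\partial_{t_x}$ to the defining recursion $k \cdot p_\sigma(\bm t) = \sum_{y\in X_\sigma(1)} t_y \cdot p_{\sigma\cup\{y\}}(\pi_{\sigma+y}(\bm t))$. Writing $g_y(\bm t) := p_{\sigma\cup\{y\}}(\pi_{\sigma+y}(\bm t))$, one gets
\[
    k \cdot \partial_{t_x} p_\sigma(\bm t) \;=\; g_x(\bm t) \;+\; t_x \cdot \partial_{t_x} g_x(\bm t) \;+\; \sum_{y\in X_\sigma(1),\, y\neq x} t_y \cdot \partial_{t_x} g_y(\bm t).
\]
Each $\partial_{t_x} g_y$ is evaluated by the chain rule together with the inductive hypothesis applied to $p_{\sigma\cup\{y\}}$: differentiating under the $\pi$ map and then applying commutativity of the $\pi$ maps collapses $\pi_{(\sigma\cup\{y\})+z}(\pi_{\sigma+y}(\bm t))$ to $\pi_{\sigma+\{y,z\}}(\bm t)$, so every term in sight is a value of $p_{\sigma\cup\{x,z\}}$ at $\pi_{\sigma+\{x,z\}}(\bm t)$ for some $z$. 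The only $\bm t$-dependence in the $\pi$-map coefficient is linear in $t_y$ (of the form $t_z - c_{z,y} t_y$), so the $y\neq x$ terms survive only when $\{x,y\}\in X_\sigma(2)$, giving a contribution $\sum_{y\neq x} t_y \cdot p_{\sigma\cup\{x,y\}}(\pi_{\sigma+\{x,y\}}(\bm t))$, while the $y=x$ term contributes $-\sum_{z} c_{z,x} t_x \cdot p_{\sigma\cup\{x,z\}}(\pi_{\sigma+\{x,z\}}(\bm t))$.

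The combinatorial miracle is that these two contributions assemble into $\sum_{z\in X_{\sigma\cup\{x\}}(1)} \pi_{\sigma+x}(\bm t)_z \cdot p_{\sigma\cup\{x,z\}}(\pi_{\sigma+\{x,z\}}(\bm t))$, which by the inductive formula for $\partial_{t_z} p_{\sigma\cup\{x\}}$ and Euler's identity applied to the $(k-1)$-homogeneous polynomial $p_{\sigma\cup\{x\}}$ equals exactly $(k-1) \cdot p_{\sigma\cup\{x\}}(\pi_{\sigma+x}(\bm t)) = (k-1) g_x(\bm t)$. Substituting back yields $k \cdot \partial_{t_x} p_\sigma = g_x + (k-1) g_x = k \cdot g_x$, which is the claim. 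I expect this Euler-identity-plus-commutativity matching to be the main step to verify carefully, as it is where the precise form of the $\pi$ maps (affine-linear with one ``pivot'' coordinate per map) plays a crucial role.

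Finally, for the second identity with $x\neq y$, apply $\partial_{t_y}$ to the first-derivative formula $\partial_{t_x} p_\sigma = p_{\sigma\cup\{x\}}(\pi_{\sigma+x}(\bm t))$. The chain rule gives $\partial_{t_y}\partial_{t_x} p_\sigma = \sum_{z\in X_{\sigma\cup\{x\}}(1)} [\partial_{t_y}\pi_{\sigma+x}(\bm t)_z]\cdot (\partial_{t_z} p_{\sigma\cup\{x\}})(\pi_{\sigma+x}(\bm t))$; since $y\neq x$, the only surviving $z$ is $z=y$, and this survives only if $y\in X_{\sigma\cup\{x\}}(1)$, i.e.\ $\{x,y\}\in X_\sigma(2)$. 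In that case the inductive hypothesis applied to $\partial_{t_y} p_{\sigma\cup\{x\}}$ together with commutativity of the $\pi$ maps produces $p_{\sigma\cup\{x,y\}}(\pi_{\sigma+\{x,y\}}(\bm t))$ as desired, while the off-complex case vanishes.
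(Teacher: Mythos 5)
Your proof is correct. The paper itself does not prove this lemma (it is recalled with a citation to Lemma~3.3 of \cite{BL21}), and your induction on $\codim(\sigma)$ --- differentiating the defining recursion, using the chain rule together with commutativity of the $\pi$ maps to rewrite every term as $p_{\sigma\cup\{x,z\}}(\pi_{\sigma+\{x,z\}}(\bm{t}))$, and then reassembling $\sum_{z}\pi_{\sigma+x}(\bm{t})_z\,\partial_{t_z}p_{\sigma\cup\{x\}}$ via Euler's identity for the $(d-|\sigma|-1)$-homogeneous $p_{\sigma\cup\{x\}}$ --- is exactly the argument that underlies the cited result, with the second identity following from the first plus commutativity as you state.
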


\begin{lemma}[Chain rule] \label{lem:chain-rule}
    If $f \in \R[t_1,\ldots,t_n]$ is a polynomial, $L: \R^m \to \R^n$ is a linear map, and $\bm{v} \in \R^m$ then
    \[
        \nabla_{\bm{v}}[f(L(\bm{t}))] = [\nabla_{L(\bm{v})}f](L(\bm{t})).
    \]
\end{lemma}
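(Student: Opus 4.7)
The plan is to reduce the identity to its standard one-variable analogue via the limit definition of the directional derivative. Recall that for any polynomial $g$ on $\R^m$ and vectors $\bm{t},\bm{v} \in \R^m$, the directional derivative is $\nabla_{\bm{v}} g(\bm{t}) = \frac{d}{dh} g(\bm{t} + h\bm{v})\big|_{h=0}$. The idea is to apply this definition to $g(\bm{t}) := f(L(\bm{t}))$ and then use linearity of $L$ to pull the scalar $h$ through $L$.

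Concretely, I would compute
\[
    \nabla_{\bm{v}}[f(L(\bm{t}))] = \frac{d}{dh} f(L(\bm{t} + h\bm{v}))\Big|_{h=0} = \frac{d}{dh} f(L(\bm{t}) + h L(\bm{v}))\Big|_{h=0} = [\nabla_{L(\bm{v})}f](L(\bm{t})),
\]
where the second equality uses linearity of $L$ (so that $L(\bm{t} + h\bm{v}) = L(\bm{t}) + h L(\bm{v})$) and the third equality is once again the limit definition of the directional derivative, this time applied to $f$ at the point $L(\bm{t})$ in direction $L(\bm{v})$. This yields exactly the claimed identity.

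A coefficient-by-coefficient alternative would be to reduce, by linearity in $f$, to the case of a single monomial, and by linearity in $\bm{v}$, to the case where $\bm{v}$ is a standard basis vector $e_i$. The identity then collapses to the familiar multivariable chain rule $\partial_{t_i}[f(L(\bm{t}))] = \sum_j (\partial_{s_j} f)(L(\bm{t})) \cdot L(e_i)_j$, applied termwise to each monomial of $f$. There is no genuine obstacle here; the lemma is a routine bookkeeping statement whose purpose is to streamline the directional-derivative manipulations of $p_\sigma$ carried out in \cref{sec:dir-derivs}, where the linear maps $L$ will be the $\pi$-maps $\pi_{\sigma+\tau}$ appearing inside $p_\sigma$ by construction.
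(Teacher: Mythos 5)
Your proof is correct: the limit-definition argument together with linearity of $L$ is exactly the standard verification of this identity, and the paper itself states \cref{lem:chain-rule} without proof, treating it as a routine recalled fact. There is nothing to add or compare; your write-up simply supplies the (omitted) canonical argument.
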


Using these two lemmas, we now prove the nice formula for applying $\bm\alpha(v)$ directional derivatives to $p_\sigma$.

\begin{lemma} \label{lem:dir-deriv-expression}
    Given $\sigma \in X$, let $\{v_1,\ldots,v_m\}$ be any ordering of the vertices of $V \setminus V(\sigma)$. 
    We have
    \[
        \left[\prod_{v\notin V(\sigma)} \nabla_{\bm\alpha(v)}\right] p_\sigma(\pi_\sigma(\bm{t})) = \left[\prod_{i=1}^m \W_{V(\sigma) \cup \{v_1,\ldots,v_{i-1}\}}[v_i \to v_i]\right] \sum_{\tau \in X(d): \sigma \subseteq \tau} \mu(\tau).
    \]
    In particular, $\prod_{i=1}^m \W_{V(\sigma) \cup \{v_1,\ldots,v_{i-1}\}}[v_i \to v_i]$ is independent of the order of $\{v_1,\ldots,v_m\}$.
\end{lemma}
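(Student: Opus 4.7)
The plan is to chain-rule our way inside, unfold $p_\sigma$ via its recursive definition, and then use a polarization identity together with the vanishing property of \cref{lem:alpha-pi} to identify the answer. As a first step, applying \cref{lem:chain-rule} iteratively and \cref{lem:alpha-pi} with outer $\sigma = \varnothing$ and inner $\tau = \sigma$, one has $\pi_\sigma(\bm\alpha(v)) = \bm\alpha_\sigma(v)$ for each $v \notin V(\sigma)$, so
\[
    \left[\prod_{v \notin V(\sigma)} \nabla_{\bm\alpha(v)}\right] p_\sigma(\pi_\sigma(\bm t)) = \left[\prod_v \nabla_{\bm\alpha_\sigma(v)} p_\sigma\right](\pi_\sigma(\bm t)).
\]
This is a product of $m := d - |\sigma|$ directional derivatives applied to a polynomial of degree $m$, hence a constant, so it suffices to compute $\prod_v \nabla_{\bm\alpha_\sigma(v)} p_\sigma$.

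Next, I would unfold the recursion $m \cdot p_\sigma(\bm t) = \sum_{x \in X_\sigma(1)} t_x p_{\sigma \cup \{x\}}(\pi_{\sigma + x}(\bm t))$ by repeated substitution together with the commutativity of the $\pi$ maps to obtain
\[
    m! \cdot p_\sigma(\bm t) = \sum_{(x_1,\ldots,x_m)} t_{x_1} \cdot \prod_{i=2}^m \pi_{\sigma + \{x_1,\ldots,x_{i-1}\}}(\bm t)_{x_i} \cdot \mu(\sigma \cup \{x_1,\ldots,x_m\}),
\]
where the outer sum ranges over ordered $m$-tuples whose underlying set is a facet of $X_\sigma$. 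For any product of $m$ linear forms $L_1, \ldots, L_m$ and any vectors $\bm u_1, \ldots, \bm u_m$, the polarization identity reads $\prod_j \nabla_{\bm u_j}[L_1 \cdots L_m] = \operatorname{perm}(L_i(\bm u_j))$. Applying this termwise yields
\[
    m! \prod_v \nabla_{\bm\alpha_\sigma(v)} p_\sigma = \sum_{(x_1,\ldots,x_m)} \mu(\sigma \cup \{x_1,\ldots,x_m\}) \cdot \operatorname{perm}\!\left(M^{(\bm x)}\right),
\]
where $M^{(\bm x)}_{ij} := \pi_{\sigma + \{x_1,\ldots,x_{i-1}\}}(\bm\alpha_\sigma(v_j))_{x_i}$ (with the convention $\pi_{\sigma + \varnothing} = \operatorname{id}$).

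The key computational step is a triangularity observation. Let $u_k := \operatorname{site}(x_k) \in V$. By \cref{lem:alpha-pi}, $M^{(\bm x)}_{ij}$ vanishes precisely when $v_j \in \{u_1,\ldots,u_{i-1}\}$, and otherwise equals $\W_{V(\sigma) \cup \{u_1,\ldots,u_{i-1}\}}[u_i \to v_j]$. Writing the ordered tuple as $(y_{\pi(1)}, \ldots, y_{\pi(m)})$ for some $\pi \in S_m$, where $y_k$ is the spin at site $v_k$ of the underlying facet (so $u_i = v_{\pi(i)}$), and making the substitution $\tau = \pi \circ \rho$ in the permanent's sum over $S_m$, the non-vanishing condition becomes $\rho(i) \geq i$ for all $i$, which forces $\rho = \operatorname{id}$. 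Thus each permanent collapses to the single product $C_m^{(\pi)} := \prod_i \W_{V(\sigma) \cup \{v_{\pi(1)},\ldots,v_{\pi(i-1)}\}}[v_{\pi(i)} \to v_{\pi(i)}]$, and summing over ordered facets gives
\[
    m! \prod_v \nabla_{\bm\alpha_\sigma(v)} p_\sigma = \left(\sum_{\tau \supseteq \sigma} \mu(\tau)\right) \cdot \sum_{\pi \in S_m} C_m^{(\pi)}.
\]

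To close the argument I would prove that $C_m^{(\pi)}$ is in fact independent of $\pi$, which also yields the ``in particular'' statement. Identifying $\W_U[u \to v]$ as the $(u,v)$-entry of the fundamental matrix $N_U := (I - W|_{V \setminus U})^{-1}$, the Schur complement identity for determinants reads $\det(N_U) = N_U(v,v) \cdot \det(N_{U \cup \{v\}})$ for any $v \in V \setminus U$; iterating across $v_1,\ldots,v_m$ gives $C_m^{(v_1,\ldots,v_m)} = \det\bigl(N_{V(\sigma)}(v_i,v_j)\bigr)_{i,j}$, which is manifestly order-independent. Consequently $\sum_\pi C_m^{(\pi)} = m! \cdot C_m^{(v_1,\ldots,v_m)}$, and the stated formula follows. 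The main obstacle I anticipate is the bookkeeping in the triangularity step: tracking indices carefully enough to see that only $\rho = \operatorname{id}$ survives in each permanent expansion, so that the answer telescopes to a single product of hitting quantities rather than the naive permanent of the associated matrix of first-passage probabilities.
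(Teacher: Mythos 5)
Your proof is correct, but it takes a genuinely different route from the paper. The paper argues by induction on $\codim(\sigma)$: it applies one directional derivative $\nabla_{\bm\alpha(v)}$ at a time (chain rule, \cref{lem:alpha-pi}, \cref{lem:derivs-expression}, commutativity of $\bm\pi$), and then uses the annihilation identity $\pi_{\sigma\cup\{us_u\}}(\bm\alpha(u))=\bm{0}$ to kill every term of the resulting sum except those indexed by spins of the site $v$ just differentiated, so the recursion telescopes site by site; order-independence of the product then comes for free because directional derivatives commute. You instead expand $p_\sigma$ globally into $m!\,p_\sigma(\bm t)=\sum_{(x_1,\ldots,x_m)} \prod_i \pi_{\sigma+\{x_1,\ldots,x_{i-1}\}}(\bm t)_{x_i}\,\mu(\sigma\cup\{x_1,\ldots,x_m\})$ (valid by commutativity of $\bm\pi$), apply the permanent/polarization identity termwise, and use \cref{lem:alpha-pi} to show each permanent is ``triangular'' so that only the identity permutation survives — this collapse is the same mechanism as the paper's annihilation step, just executed all at once rather than inductively. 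The one place your route needs extra work is order-independence, which in your setup is an input rather than a by-product; your fundamental-matrix argument handles it cleanly: since $\W_U[u\to v]$ for $u,v\in V\setminus U$ is the $(u,v)$-entry of $N_U=(I-W|_{V\setminus U})^{-1}$ (the Neumann series converges because the restricted graph is absorbing), Cramer's rule gives $N_U(v,v)=\det\bigl((I-W|_{V\setminus U})_{\hat v,\hat v}\bigr)/\det(I-W|_{V\setminus U})$, whence $\det(N_U)=N_U(v,v)\det(N_{U\cup\{v\}})$ and the product telescopes to $\det(N_{V(\sigma)})$, manifestly order-free. This buys you something the paper's proof does not state: a closed form for the constant, $\prod_i \W_{V(\sigma)\cup\{v_1,\ldots,v_{i-1}\}}[v_i\to v_i]=\det\bigl((I-W|_{V\setminus V(\sigma)})^{-1}\bigr)$, at the cost of heavier bookkeeping. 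One small imprecision: $M^{(\bm x)}_{ij}$ vanishes \emph{whenever} $v_j\in\{u_1,\ldots,u_{i-1}\}$ (it could also vanish otherwise if the relevant hitting probability is zero), but only the ``whenever'' direction is used in the collapse, so the argument stands.
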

\begin{proof}
    We prove this by induction on $\codim(\sigma)$ 
    where the result is trivial in the case that $\codim(\sigma) = 0$. For $v\notin V(\sigma)$, we first compute an expression for a single directional derivative.
    \begin{align*}
        \nabla_{\bm\alpha(v)} [p_\sigma(\pi_\sigma(\bm{t}))] &\underset{\text{chain rule}}{=} [\nabla_{\pi_\sigma(\bm\alpha(v))} p_\sigma](\pi_\sigma(\bm{t})) \\
            &\underset{\text{\cref{lem:alpha-pi}}}{=} [\nabla_{\bm\alpha_\sigma(v)} p_\sigma](\pi_\sigma(\bm{t})) \\
            &\underset{\text{\cref{lem:derivs-expression}}}{=} \sum_{ws_w\in X_\sigma(1)} \alpha_\sigma(v)_{ws_w} \cdot p_{\sigma \cup \{ws_w\}}(\pi_{\sigma + ws_w}(\cdot))](\pi_\sigma(\bm{t})) \\
            &\underset{\text{commut. of $\bm\pi$}}{=} \sum_{ws_w \in X_\sigma(1)} \alpha_\sigma(v)_{ws_w} \cdot p_{\sigma \cup \{ws_w\}}(\pi_{\sigma \cup \{ws_w\}}(\bm{t})).
    \end{align*}
    To prove the desired expression, we now want to apply $\nabla_{\bm\alpha(u)}$ for all other $u \not\in V(\sigma)$ besides $v$. A priori, it seems this computation could become quite complicated. However, we now make a small observation which will simplify things significantly. Note that by \cref{lem:alpha-pi} we have
    \[
        \pi_{\sigma \cup \{us_u\}}(\bm\alpha(u)) = \bm{0},
    \]
    which implies
    \begin{equation} \label{eq:alpha-order}
        \nabla_{\bm\alpha(u)} [p_{\sigma \cup \{us_u\}}(\pi_{\sigma \cup \{us_u\}}(\bm{t}))] \underset{\text{chain rule}}{=} [\nabla_{\pi_{\sigma \cup \{ui\}}(\bm\alpha(u))} p_{\sigma \cup \{us_u\}}](\pi_{\sigma \cup \{us_u\}}(\bm{t})) = 0.
    \end{equation}
    Therefore we have
    \begin{align*}
        \left[\prod_{u \not\in V(\sigma)} \nabla_{\bm\alpha(u)}\right] p_\sigma(\pi_\sigma(\bm{t})) &= \left[\prod_{u \not\in V(\sigma) \cup \{v\}} \nabla_{\bm\alpha(u)}\right] \nabla_{\bm\alpha(v)} [p_\sigma(\pi_\sigma(\bm{t}))] \\
            &= \sum_{ws_w \in X_\sigma(1)} \alpha_\sigma(v)_{ws_w} \left[\prod_{u \not\in V(\sigma) \cup \{v\}} \nabla_{\bm\alpha(u)}\right] p_{\sigma \cup \{ws_w\}}(\pi_{\sigma \cup \{ws_w\}}(\bm{t})) \\
            &\underset{\text{\eqref{eq:alpha-order}}}{=} \sum_{s_v: vs_v \in X_\sigma(1)} \alpha_\sigma(v)_{vs_v} \left[\prod_{u \not\in V(\sigma) \cup \{v\}} \nabla_{\bm\alpha(u)}\right] p_{\sigma \cup \{vs_v\}}(\pi_{\sigma \cup \{vs_v\}}(\bm{t})).
    \end{align*}
    That is, the sum of the RHS is only over $vs_v \in X_\sigma(1)$ for the specific vertex $v$ with which we applied the directional derivative $\nabla_{\bm\alpha(v)}$. Now suppose we order vertices not in $V(\sigma)$ as $v_1,\dots, v_m$ for $m=\codim(\sigma)$. Recalling that $\alpha_\sigma(v)_{vs_v} = \W_\sigma[v \to v]$, by induction we finally have
    \begin{align*}
        \left[\prod_{i=1}^m \nabla_{\bm\alpha(v_i)}\right] &p_\sigma(\pi_\sigma(\bm{t})) \\
            &= \sum_{s:v_1s \in X_\sigma(1)} \alpha_\sigma(v_1)_{v_1s} \left[\prod_{i=2}^m \nabla_{\bm\alpha(v_i)}\right] p_{\sigma \cup \{v_1s\}}(\pi_{\sigma \cup \{v_1s\}}(\bm{t})) \\
            &\underset{\text{IH}}{=} \sum_{s:v_1s \in X_\sigma(1)} \W_\sigma[v_1 \to v_1] \cdot \left[\prod_{i=2}^m \W_{V(\sigma) \cup \{v_1,\ldots,v_{i-1}\}}[v_i \to v_i]\right] \sum_{\tau \in X(d): \sigma \cup \{v_1s\} \subseteq \tau} \mu(\tau) \\
            &= \left[\prod_{i=1}^m \W_{V(\sigma) \cup \{v_1,\ldots,v_{i-1}\}}[v_i \to v_i]\right] \sum_{\tau \in X(d): \sigma \subseteq \tau} \mu(\tau)
    \end{align*}
    as desired.
\end{proof}

\subsection{Proof of \cref{thm:mainlorentzian}}

We now finally use \cref{lem:dir-deriv-expression} to prove \cref{thm:mainlorentzian}. First we compute a certain Hessian related to the random walk matrix $P_{u,v}$. Let $\sigma = \{us_u,vs_v\}$ and let $w_1,\dots,w_{d-2}$ be the rest of the vertices in $V$. 
We have
\[
\begin{split}
    \nabla_{\bm\alpha(w_1)} \cdots &\nabla_{\bm\alpha(w_{d-2})} \partial_{us_u} \partial_{vs_v} p_\varnothing(\bm{t}) \\
        &\underset{\text{\cref{lem:derivs-expression}}}{=} \nabla_{\bm\alpha(w_1)} \cdots \nabla_{\bm\alpha(w_{d-2})} p_{\{us_u,vs_v\}}(\pi_{\{us_u,vs_v\}}(\bm{t})) \\
        &\underset{\text{\cref{lem:dir-deriv-expression}}}{=} \left[\prod_{i=1}^{d-2} \W_{\{u,v,w_1,\dots,w_{i-1}\}}[w_i \to w_i]\right] \sum_{\tau \in X(d): \{us_u,vs_v\} \subseteq \tau} \mu(\tau)
\end{split}
\]
and
\[
\begin{split}
    \nabla_{\bm\alpha(w_1)} \cdots &\nabla_{\bm\alpha(w_{d-2})} \partial_{us_u}^2 p_\varnothing(\bm{t}) \\
        &\underset{\text{\cref{lem:derivs-expression}}}{=} \sum_{s: vs \in X_{\{us_u\}}(1)} \partial_{t_{us_u}} \pi_{\{us_u\}}(\bm{t})_{vs} \cdot \nabla_{\bm\alpha(w_1)} \cdots \nabla_{\bm\alpha(w_{d-2})} [p_{\{us_u,vs\}}(\pi_{\{us_u,vs\}}(\bm{t}))] \\
        &\underset{\text{\cref{lem:dir-deriv-expression}}}{=} \sum_{s: vs \in X_{\{us_u\}}(1)} -\W_{u}[v \to u] \cdot \left[\prod_{i=1}^{d-2} \W_{\{u,v,w_1,\ldots,w_{i-1}\}}[w_i \to w_i]\right] \sum_{\tau \in X(d): \{us_u,vs\} \subseteq \tau} \mu(\tau) \\
        &= -\W_{u}[v \to u] \cdot \left[\prod_{i=1}^{d-2} \W_{\{u,v,w_1,\ldots,w_{i-1}\}}[w_i \to w_i]\right] \sum_{\tau \in X(d) : us_u \in \tau} \mu(\tau).
\end{split}
\]
Thus up to positive scalar, the Hessian $H_{u,v}$ of $\nabla_{\bm\alpha(w_1)} \cdots \nabla_{\bm\alpha(w_{d-2})} p_\varnothing(\bm{t})$ is equal to $A_{u,v} - D_{u,v} M_{u,v}$ where $A_{u,v}$ and $D_{u,v}$ are as defined above \cref{thm:mainlorentzian} and $M_{u,v}$ is the diagonal matrix which equals $\W_{u}[v \to u]$ for the $u$ entries and equals $\W_{v}[u \to v]$ for the $v$ entries. Since $p_\varnothing$ is $\cC_\varnothing$-Lorentzian by \cref{thm:partite-Lorentzian}, and $\bm\alpha(w_k) \in \overline{C}_\varnothing$ for all $k$ by \cref{lem:alpha-pi}, the matrix $H_{u,v}$ has one positive eigenvalue by \cref{lem:main-purpose-lemma}. Since $G_{u,v}$ is a bipartite graph, by \cref{lem:eigenvaluebipartite} we finally have that $\lambda_2(P_{u,v}) = \lambda_2(D_{u,v}^{-1} A_{u,v}) \leq \sqrt{\W_{u}[v \to u] \cdot \W_{v}[u \to v]}$. This completes the proof of \cref{thm:mainlorentzian}.

\section{From $\cC$-Lorentzian to Spectral Independence} \label{sec:spectral-indep}
In this section we prove \cref{thm:maintechnical} and then we use it to prove \cref{cor:maintechnical-distinct}. We then use \cref{cor:maintechnical-distinct} to prove \cref{thm:main}.
\begin{proof}[Proof of \cref{thm:maintechnical}]
   By the conclusion of \cref{thm:mainlorentzian}, for any pair of vertices $u,v\in V$ we have 
\begin{equation}\label{eq:Puvlambda2}\lambda_2(P_{u,v}) \leq \sqrt{\W_{u}[v \to u] \cdot \W_{v}[u \to v]},
\end{equation}
where recall that $P_{u,v}$ is the simple random walk matrix on the weighted bipartite graph $G_{u,v}$ on $S_u\cup S_v$ where the weight of every edge $A_{u,v}(us_u,vs_v)$ is the probability that $us_u,vs_v$ are in a $\sigma\sim\mu$. Observe that from \eqref{eq:Puvlambda2} we have $\lambda_2(P_{u,v})\leq 0$ if $u,v$ there is no directed path from $u$ to $v$ (or from $v$ to $u$) w.r.t. the random walk matrix $W$. Thus recalling that $M_\varnothing(u,v) = \sqrt{\W_{u}[v \to u] \cdot \W_{v}[u \to v]}$, the assumptions of \cref{lem:dpartiteeigenvalues} are satisfied by $M_\varnothing$. Therefore we have
\[
    \lambda(P_\varnothing) \leq \frac{\lambda_{\max}(M_\varnothing)}{d-1}
\]
as desired.

To prove \cref{thm:maintechnical} for any $\tau \in X$ of codimension at least 2, notice that $(X_\tau, \mu_\tau)$ is a connected $\codim(\tau)$-partite complex with parts indexed by $V \setminus V(\tau)$. Thus by adding $V(\tau)$ to the set of boundary vertices $B$, we consider a random walk on $V_\tau \cup B_\tau$ for $V_\tau = V \setminus V(\tau)$ and $B_\tau = B \cup V(\tau)$, with the same transition probabilities as $W$. Thus we can define $M_\tau$ analogously to $M_\varnothing$ by setting $M_\tau(u,v) = \sqrt{\W_{u}[v \to u] \cdot \W_{v}[u \to v]}$ according to this new random walk matrix on $V_\tau \cup B_\tau$. Further, the assumptions of \cref{lem:dpartiteeigenvalues} are satisfied by $M_\tau$. Therefore we have
\[
    \lambda(P_\tau) \leq \frac{\lambda_{\max}(M_\tau)}{\codim(\tau)-1}
\]
for all $\tau \in X$ of codimension at least 2, as desired.
\end{proof}

\subsection{Upper Bounds via the Number of Distinct Walk Vertices}
\begin{proof}[Proof of \cref{cor:maintechnical-distinct}]
    To prove this corollary, we just need to show that
    \[
        \lambda_{\max}(M_\tau) \leq \max_{u \in V \setminus V(\tau)} \E_{Q\sim \cW_{V(\tau)}[u]}[d(Q)-2]
    \]
    Given any $\tau \in X$ of codimension at least 2, let $M'_\tau$ be defined as $M'_\tau(u,v)=\W_{V(\tau) \cup \{v\}}[u\to v]$ for all $u,v\in V \setminus V(\tau)$, so that $\bar{M}_\tau' = M_\tau$ (using the $\bar{M}$ notation from \cref{sec:lin-alg}). By \cref{lem:rhoaverage}, we have
    $$ \lambda_{\max}(M_\tau) = \rho(M_\tau)\leq\rho(M'_\tau),$$
    and then we can bound the spectral radius by the maximum row sum via
    \[
        \lambda_{\max}(M_\tau)\leq\rho(M'_\tau)\leq \max_{u \in V \setminus V(\tau)} \sum_{v \in V \setminus V(\tau) : v \neq u} \W_v[u\to v].
    \]
    Therefore, by \cref{lem:expected-length-bound} we have
    \[
        \lambda_{\max}(M_\tau) \leq \max_{u \in V \setminus V(\tau)} \sum_{v \in V \setminus V(\tau) : v \neq u} \W_v[u\to v] = \max_{u \in V \setminus V(\tau)} \E_{Q\sim \cW_{V(\tau)}[u]}[d(Q)-2]
    \]
    as desired.
\end{proof}

\subsection{Generalizing Dobrushin's Condition}
\begin{proof}[Proof of \cref{thm:main}]
Given a $d$-partite complex $X$ corresponding to a multi-state spin system and the pairwise spectral influence matrix $\cI$, we first construct an undirected graph $G$ with corresponding random walk matrix $W$ and then we apply \cref{cor:maintechnical-distinct}. We add a boundary vertex $b_v$ for any $v\in V$, we let $B = \{b_v : v \in V\}$ and we let $V(G)=V\cup B$. For any $u,v\in V$ with $\cI(u,v)>0$ we include an edge $\{u,v\}$; we also connect any $b_v$ to $v$. Next, we determine the weight of the edges.

 Let $V_1 \cup B_1, V_2 \cup B_2, \ldots, V_\ell \cup B_\ell$ be the connected components of $G$. Furthermore, let $\epsilon_1,\epsilon_2,\dots,\epsilon_\ell$ be such that $\lambda_{\max}(\cI_i) = 1-\epsilon_i$ for all $i \in [\ell]$, where $\cI_i$ is the principal submatrix of $\cI$ corresponding to $G[V_i\cup B_i]$. Let $\bm{x}_i\in \R^{V_i}_{>0}$ be the corresponding eigenvectors which can be chosen to be strictly positive by the Perron-Frobenius theorem; i.e., $\cI_i \bm{x}_i=(1-\eps_i)\bm{x}_i$. 

We now define $W\in \R^{(V\cup B)\times (V\cup B)}$. 
For any $u,v\in V_i$, we let
\[
    W(u,v) = \cI(u,v)\cdot \frac{x_i(v)}{x_i(u)}, \quad \text{for all } u,v \in V_i.
\]
Further, for $v\in V_i$ we let $W(v,b_v)=\eps_i, W(b_v,v)=1$, and we set every other entry of $W$ to be zero. 

Next, we show that indeed $W$ is a random walk matrix. First notice $W(u,v) \geq 0$ for all $u,v \in V\cup B$. For any $u \in V_i$ we have
\[
    \sum_{v \in V\cup B} W(u,v) = W(u,b_u)+\sum_{v \in V_i} W(u,v) = \eps_i+\sum_{v \in V_i} \cI(u,v) \cdot \frac{x_i(v)}{x_i(u)} = \eps_i+\frac{(1-\epsilon_i) \cdot x_i(u)}{x_i(u)} = 1.
\]
So, $W$ is stochastic.

Note that the graph $G$ that we constructed above is absorbing as there are at least one boundary vertex $b$ in every connected component. 
Now we check the assumption of \cref{cor:maintechnical-distinct}.
Fix $u,v\in V_i$ let $\tau\in X$ be a face of codimension 2 such that $u,v\notin V(\tau).$ Then by the definition of $\cI$, we have
$$\lambda_2(P_\tau)\leq \cI(u,v) = \sqrt{\cI(u,v)\cI(v,u)}=\sqrt{W(u,v)W(v,u)}.
$$
Since $X$ is connected, by \cref{cor:maintechnical-distinct}  we have
\[
    \lambda_2(P_\sigma) \leq \frac{1}{\codim(\sigma)-1} \max_u \E_{Q\sim \cW_{V(\sigma)}[u]}[d(Q)-2]\leq\frac{1}{\codim(\sigma)-1}\max_u \E_{Q\sim \cW_{V(\sigma)}[u]}[|Q|-1]
\]
for every $\sigma \in X$ of codimension at least 2.
We now upper-bound the RHS. For an arbitrary vertex $u\in V \setminus V(\sigma)$, we have
\begin{align*}
    \E_{Q\sim \cW_{V(\sigma)}[u]}  [|Q|-1] = \sum_{k=2}^\infty \P[|Q|\geq k]\leq \sum_{k=1}^\infty (1-\eps)^k = \frac{1-\eps}{\eps}
\end{align*}
where in the first identity we used that every walk $Q$ has length at least 1. The inequality uses that from every vertex $v\in V_i$ we jump to an absorbing vertex $b_v$ with probability at least $\eps_i\geq \eps$. 
This then implies $\lambda_2(P_\sigma)\leq \frac{1-\eps}{(\codim(\sigma)-1)\eps}.$ Therefore $\mu$ is $\frac{1-\eps}{\epsilon}$-spectrally independent.
%
\end{proof}

\subsection{Path Complexes} \label{sec:path-complexes}

In this section, we use the machinery of this paper to prove a generalization of one of the main results of \cite{LLO25} for path complexes. First let us recall some notation from \cite{LLO25}. A $d$-partite simplicial complex $X$ is called a {\bf top-link path complex}\footnote{The definition we give here is slightly different than that of \cite{LLO25}, but the two definitions are equivalent.} if the parts/sites $V$ of $X$ can be labeled by $[d] = \{1,2,\ldots,d\}$ so that the following holds: for every $\tau \in X$ of codimension 2 such that $V \setminus V(\tau) = \{i,j\}$ for non-consecutive $i,j$, we have $\lambda_2(P_\tau) \leq 0$. The main trickle-down theorem obtained for top-link path complexes is then given as follows.

\begin{theorem}[Theorem 1.4 of \cite{LLO25}] \label{thm:path-complex}
    If $(X, \mu)$ is a connected $d$-partite top-link path complex such that $\lambda_2(P_\tau) \leq \frac{1}{2}$ for all $\tau \in X$ of codimension 2, then $\lambda_2(P_\tau) \leq \frac{1}{2}$ for all $\tau \in X$.
\end{theorem}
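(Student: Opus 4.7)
The plan is to apply \cref{thm:maintechnical} with a carefully chosen random walk matrix $W$. I will label the parts of $X$ by $[d] = \{1,\ldots,d\}$ as in the definition of a top-link path complex, introduce two boundary vertices $B = \{0, d+1\}$, and take $W$ to be the symmetric nearest-neighbor random walk on the path $\{0,1,\ldots,d+1\}$ with absorbing barriers at $0$ and $d+1$, so $W(i,i\pm 1) = \tfrac{1}{2}$ for interior $i$, and $W(1,0) = W(d,d+1) = \tfrac{1}{2}$. This graph is clearly absorbing.

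To invoke \cref{thm:maintechnical}, I must verify $\lambda_2(P_\sigma) \leq \sqrt{W(u,v)\,W(v,u)}$ for every $\sigma \in X(d-2)$ with $V \setminus V(\sigma) = \{u,v\}$. When $u,v$ are consecutive, the right-hand side equals $\tfrac{1}{2}$, matching the codimension-$2$ hypothesis of the theorem exactly. When $u,v$ are non-consecutive, the right-hand side is $0$, which is handled by the top-link path complex property $\lambda_2(P_\sigma)\leq 0$. Thus \cref{thm:maintechnical} reduces the problem to bounding $\lambda_{\max}(M_\tau)$ by $\tfrac{\codim(\tau)-1}{2}$ for all $\tau$.

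The heart of the argument is the computation at $\tau = \varnothing$. By standard gambler's ruin, for $i<j$ one has $\W_j[i\to j] = i/j$ and $\W_i[j\to i] = (d+1-j)/(d+1-i)$, so
\[
    M_\varnothing(i,j) = \sqrt{\tfrac{i\,(d+1-j)}{j\,(d+1-i)}} \qquad (i<j),
\]
extended by symmetry. I will then propose the strictly positive candidate eigenvector $x_i := \sqrt{i(d+1-i)}$. Splitting $\sum_{j\neq i} M_\varnothing(i,j)\,x_j$ into the ranges $j<i$ and $j>i$, the inner radicals collapse and the sum reduces to $\sqrt{\tfrac{d+1-i}{i}}\cdot \tfrac{i(i-1)}{2} + \sqrt{\tfrac{i}{d+1-i}}\cdot \tfrac{(d-i)(d-i+1)}{2} = \tfrac{(i-1)+(d-i)}{2}\,x_i = \tfrac{d-1}{2}\,x_i$. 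Since $M_\varnothing$ is a non-negative irreducible matrix (the underlying graph is the full path on $d$ vertices), Perron--Frobenius certifies $\lambda_{\max}(M_\varnothing) = \tfrac{d-1}{2}$, and dividing by $d-1$ yields $\lambda_2(P_\varnothing) \leq \tfrac{1}{2}$.

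For a general face $\tau \in X$ of codimension at least $2$, I will adjoin the sites $V(\tau)$ to the boundary $B$. This breaks the path $\{1,\ldots,d\}$ into disjoint sub-paths of lengths $d_1,\ldots,d_k$ summing to $\codim(\tau)$. Since any walk stops the first time it meets a boundary vertex, $M_\tau$ becomes block diagonal with one block per sub-path, and applying the same eigenvector formula inside each block gives $\lambda_{\max}(M_\tau) = \max_i \tfrac{d_i-1}{2} \leq \tfrac{\codim(\tau)-1}{2}$, which closes the argument. The main obstacle is verifying the telescoping eigenvector identity above cleanly; once the small miracle $(i-1)+(d-i) = d-1$ is in hand, the rest reduces to bookkeeping between the absorbing walk and the $\cC$-Lorentzian machinery of \cref{thm:maintechnical}.
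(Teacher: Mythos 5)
Your argument is correct, and it runs through the same framework as the paper: the same choice of path graph with boundary $B=\{0,d+1\}$, edge weights $\tfrac12$, and an appeal to \cref{thm:maintechnical}, with the consecutive/non-consecutive case split handled exactly as the paper does. Where you diverge is in how $\lambda_{\max}(M_\varnothing)$ is bounded and how general $\tau$ is treated. The paper avoids computing hitting probabilities: it passes to the non-symmetric matrix $M'_\varnothing(i,j)=\W_i[j\to i]$, invokes \cref{lem:rhoaverage} to get $\lambda_{\max}(M_\varnothing)\leq \rho(M'_\varnothing)$, and bounds $\rho(M'_\varnothing)$ by the maximum row sum, which a symmetry argument shows equals $\tfrac{d-1}{2}$; for general $\tau$ it simply observes that $(X_\tau,\mu|_{X_\tau})$ is again a connected top-link path complex and reuses the $\tau=\varnothing$ case. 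You instead compute the gambler's-ruin probabilities explicitly, exhibit the exact positive eigenvector $x_i=\sqrt{i(d+1-i)}$ of the symmetric matrix $M_\varnothing$ itself (your telescoping identity checks out: the two partial sums are $\tfrac{i-1}{2}x_i$ and $\tfrac{d-i}{2}x_i$), and conclude $\lambda_{\max}(M_\varnothing)=\tfrac{d-1}{2}$ by Perron--Frobenius; for general $\tau$ you use the block-diagonal structure of $M_\tau$ over the sub-paths created by adjoining $V(\tau)$ to $B$, which is valid since $\W_v[u\to v]=0$ across a boundary site. Your route is more computational but yields the exact value of $\lambda_{\max}(M_\varnothing)$ (showing the method's bound is tight for the path walk), whereas the paper's row-sum-plus-symmetrization argument is softer and requires no closed-form hitting probabilities; both are complete proofs.
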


To reprove this theorem using our machinery, we first translate the definition of top-link path complex into the language of this paper. Given any top-link path complex, we can set $\cI(i,j) = 0$ for all $i,j$ which are non-consecutive integers. Thus our pairwise spectral influence matrix $\cI$ is a symmetric tri-diagonal matrix with zero diagonal (assuming the sites are ordered via the labels $[d]$). The above theorem then says that if all non-zero entries of $\cI$ are equal to $\frac{1}{2}$, then $\lambda_2(P_\tau) \leq \frac{1}{2}$ for all $\tau \in X$.

Note that if we could assume for some constant $\epsilon > 0$ that $\lambda_2(P_\tau) \leq \frac{1-\epsilon}{2}$ for all $\tau$ of codimension 2, then we could simply bound $\lambda_{\max}(\cI)$ by the maximum row sum of $\cI$ and apply \cref{thm:main} to obtain $\lambda_2(P_\tau) \leq \frac{1-\eps}{(\codim(\tau)-1) \epsilon}$ for all $\tau \in X$. (In fact, this recovers a weaker form of Theorem 1.21 of \cite{LLO25}.) Thus to reprove \cref{thm:path-complex}, we will instead use \cref{thm:maintechnical}.

Consider a path graph with $V = [d]$ and two boundary vertices $B = \{0,d+1\}$ adjacent to $1$ and $d$ respectively, and let the (non-directed) edges be weighted by $\frac{1}{2}$. Thus for all adjacent $i,j \in [d]$ we have $W(i,j) = \frac{1}{2}$.
We let $M'_\varnothing$ be a (non-symmetric) real matrix with zero diagonal and off-diagonal entries given by $M'_\varnothing(i,j) = \W_i[j \to i]$.
Since $\bar{M}'_\varnothing = M_\varnothing$ (using the $\bar{M}$ notation from \cref{sec:lin-alg}), we have that \cref{lem:rhoaverage} implies $\lambda_{\max}(M_\varnothing) \leq \lambda_{\max}(M'_\varnothing)$ by Perron-Frobenius. We then bound $\lambda_{\max}(M'_\varnothing)$ by bounding the maximum row sum as follows. For all $i \in [d]$ we have
\[
    \sum_{j} M'_\varnothing(i,j) = \sum_{j \neq i} \W_i[j \to i] = \sum_{j=1}^{i-1} \W_i[j\to i] + \sum_{j=i+1}^{d} \W_i[j\to i] = \frac{d-1}{2}.
\]
To see the last equality, observe  that by symmetry $\sum_{j=1}^{i-1}\W_i[j\to 0] = \sum_{j=1}^{i-1}\W_i[j\to i]$ and they sum up to $i-1$. So, each must be exactly $\frac{i-1}{2}$. Similarly, we have $\sum_{j=i+1}^d \W_i[j\to i]=\frac{d-i}{2}$. 
This implies $\lambda_{\max}(M'_\varnothing) \leq \frac{d-1}{2}$.

Therefore applying \cref{thm:maintechnical} gives: if for all $\sigma \in X(d-2)$ we have
\[
    \lambda_2(P_\sigma) \leq \sqrt{W(i,j) \cdot W(j,i)} = \begin{cases}
        \frac{1}{2}, & \text{$i,j$ are adjacent} \\
        0, & \text{$i,j$ are not adjacent},
    \end{cases}
\]
where $i,j$ are the two distinct sites in $V = [d]$ which have no spin in $\sigma$, then
\[
    \lambda_2(P_\varnothing) \leq \frac{\lambda_{\max}(M_\varnothing)}{d-1} \leq \frac{\lambda_{\max}(M'_\varnothing)}{d-1} \leq \frac{1}{2}.
\]
By the definition of top-link path complex, this is precisely \cref{thm:path-complex} for $\tau = \varnothing$. The general result for all $\tau \in X$ then follows from the fact that $(X_\tau, \mu|_{X_\tau})$ is also a connected top-link path complex (since $(X,\mu)$ is) with the same codimension 2 eigenvalue bounds.

\section{Future Directions}
Given our improved Dorbrushin's condition with a spectral pairwise influence matrix, the first question is to find new applications of this statement which were not approachable using the previous machineries. We expect to see many such directions in the next few years.

Another natural question stemming from our machinery is to prove a more general trickle-down theorem for non-partite simplicial complexes. We intend to do this in future work; that is, to reprove \cref{thm:oppenheim} and then to extend beyond worst case bounds on second eigenvalues of top links. In fact, a slight variation of the machinery of this paper can already be used to reprove \cref{thm:oppenheim}, but it is currently unclear how to generalize this idea.

Finally, we conclude this paper with a natural conjecture stemming from our family of Lorentzian polynomials.
Given a  graph $G$ with $d=|V|$ vertices, maximum degree $\Delta>1$, and $q$ colors, it is conjectured that the natural Glauber dynamics mixes in $\text{poly}(d)$ steps and generates a uniformly random proper coloring of $G$ as long as $q\geq \Delta+\Omega(1).$

Let $X$ be the $d$-partite complex defined as follows: For every vertex $v$, $S(v)$ is the list of colors available to $v$, facets of $X$ correspond to all proper colorings of $G$, and $\mu$ is the uniform distribution over all facets. Let $A$ be the adjacency matrix of $G$; attach a boundary vertex $b_v$ to every $v$ and let $P$ be the simple random walk matrix where $W(u,v)=\frac{1-\eps}{\text{deg}(u)}$ for any $u\sim v$, and $W(u,b_u)=\eps$ for all $u$.
Having the random walk matrix $W$, we can recursively define $p_\sigma$ for any $\sigma\in X$ using 
 \eqref{eq:linearpcase}. 
\begin{conjecture}
There is a constant $C>0$ such that if $|S(v)| \geq \text{deg}(v)+C$ for all $v\in V$, then $p_\varnothing$ is $\cC_\varnothing$-Lorentzian.
\end{conjecture}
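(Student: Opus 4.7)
The plan is to reduce the problem to the main structural theorem of the paper, \cref{thm:conn+quad=Lor}, which asserts that $p_\varnothing$ is $\cC_\varnothing$-Lorentzian provided that $(X,\mu)$ is connected, $\cC_\varnothing$ is non-empty, and the Hessian of $p_\sigma$ has at most one positive eigenvalue for every $\sigma \in X(d-2)$. Connectivity of $(X,\mu)$ for uniform proper colorings is standard whenever $|S(v)| \ge \deg(v)+2$, since it coincides with irreducibility of Glauber dynamics. Non-emptiness of $\cC_\varnothing$ follows from \cref{cor:cones-nonempty}, once commutativity of the $\pi$ maps built from the absorbing walk $W$ is verified; here commutativity is automatic from the identity in \cref{fact:RWprop} and the walk-based construction of $\pi_{\sigma+x}$ in \cref{sec:polys-pi-maps}. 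Thus the entire content of the conjecture is concentrated in the codimension-2 Hessian condition.

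By the argument underlying \cref{thm:partite-Lorentzian} (combining \cref{lem:quadratic-entries} and \cref{lem:eigenvaluebipartite}), this condition is equivalent to the quadratic bound \eqref{eq:quad-condition}, namely
\[
  \lambda_2(P_\sigma) \;\le\; \sqrt{W(u,v)\,W(v,u)} \;=\; \frac{1-\epsilon}{\sqrt{\deg(u)\deg(v)}},
\]
where $u,v$ are the two uncovered sites of $\sigma$. If $u\not\sim v$ in $G$, then the colors of $u$ and $v$ are conditionally independent given $\sigma$, so the link is a complete bipartite graph with uniform weights, $\lambda_2(P_\sigma)=0$, and the right-hand side is also $0$. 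If $u\sim v$, then the link is exactly $K_{q_u,q_v}$ with the $t$ ``same-color'' edges removed, where $q_u = |S_u^\sigma|$, $q_v = |S_v^\sigma|$, and $t=|S_u^\sigma\cap S_v^\sigma|$; the uniform measure on proper colorings assigns equal weight to every surviving pair. A direct rank-one-plus-matching spectral analysis of this graph yields $\lambda_2(P_\sigma) = 1/(q^*-1)$ in the symmetric case $q_u=q_v=q^*$, $t=q^*$, and $\lambda_2(P_\sigma) = O(1/\min(q_u,q_v))$ in general.

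The hypothesis $|S(v)|\ge \deg(v)+C$ forces $\min(q_u,q_v)\ge C+1$ in any codimension-2 link (since at most $\deg(u)-1$ neighbors of $u$ other than $v$ are colored in $\sigma$, and similarly for $v$), so the spectral analysis above yields $\lambda_2(P_\sigma) \le 1/C$. The plan is then to choose $\epsilon$ so that $1/C \le (1-\epsilon)/\sqrt{\deg(u)\deg(v)}$, apply \cref{thm:partite-Lorentzian}, and conclude via \cref{thm:conn+quad=Lor}. The main obstacle I expect, and the reason this is posed as a conjecture rather than a theorem, is that the naive bound $1/C \le (1-\epsilon)/\Delta$ forces $C$ to scale with the maximum degree rather than being a universal constant. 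Two natural routes to overcome this obstacle are: (i) improve the bipartite eigenvalue analysis beyond the crude $1/(q^*-1)$ by exploiting the smallness of the removed matching ($t \ll q_u q_v$) and the near-regularity of the induced bipartite graph, so that $\lambda_2(P_\sigma)$ is actually of order $1/\sqrt{\deg(u)\deg(v)}$; or (ii) reweight $W$ so that its absorbing probabilities $\epsilon_v$ at each boundary vertex depend on $\deg(v)$, producing a matrix $W$ whose geometric mean of transition probabilities is better matched to the worst-case link eigenvalue, while preserving the commutativity identity that makes the entire $\cC$-Lorentzian framework work. Making route (ii) rigorous — in particular re-verifying \cref{fact:RWprop} and the commutativity of $\pi$ maps under a locally-adapted $W$ — is the step I expect to be the hardest.
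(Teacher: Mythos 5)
There is a genuine gap here, and in fact the statement you are trying to prove is posed in the paper as an open conjecture with no proof given; the paper only notes that it holds for a single edge when $C \geq 3$. More importantly, the route you propose is precisely the one the paper explicitly argues cannot succeed. Your reduction funnels everything through \cref{thm:conn+quad=Lor} via the codimension-2 condition \eqref{eq:quad-condition}, i.e.\ $\lambda_2(P_\sigma) \leq \sqrt{W(u,v)W(v,u)}$ for the two uncovered sites $u,v$. Take $G = K_d$ with $q = \Delta + C = d-1+C$ colors: every codimension-2 link is $K_{m,m}$ minus a perfect matching with $m = q-\Delta = C$ (up to $\pm1$), so $\lambda_2(P_\sigma) = \frac{1}{q-\Delta} \approx \frac{1}{C}$ \emph{exactly} — this kills your route (i), since the eigenvalue genuinely is of order $1/C$ and not $O(1/\sqrt{\deg(u)\deg(v)})$, no matter how the matching-removal structure is exploited. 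Route (ii) fails for a counting reason that no reweighting of the absorbing walk can evade: the condition must hold for all $\binom{d}{2}$ pairs, so summing $\sqrt{W(u,v)W(v,u)} \geq 1/C$ over ordered pairs and using AM--GM together with stochasticity of $W$ gives $d(d-1)/C \leq \sum_{u \neq v} W(u,v) \leq d$, forcing $C \geq d-1 = \Delta$. Hence any argument that only inputs second eigenvalues of codimension-2 links (which is all that \cref{thm:partite-Lorentzian} and \cref{thm:conn+quad=Lor} consume) cannot yield a universal constant $C$; this is exactly the paper's remark following the conjecture.

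The paper also stresses that \cref{thm:conn+quad=Lor} and \cref{thm:maintechnical} are only \emph{sufficient} conditions for $p_\varnothing$ to be $\cC_\varnothing$-Lorentzian, and that settling the conjecture is expected to require new techniques (e.g.\ the ``non-diagonal $\pi$ maps'' mentioned in the technical overview, which would exploit the full spectrum of codimension-2 links rather than just $\lambda_2$). So while your diagnosis of the obstacle is accurate, neither of your proposed repairs can close it, and the proposal does not constitute a proof.
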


For example, if $G$ is just one edge then the above conjecture holds for $C\geq 3$.
A few remarks are in order:
\begin{itemize}
    \item If the above conjecture holds, then by the argument in \cref{sec:dir-derivs} and proof of \cref{thm:maintechnical} we obtain that $\lambda_2(P_\varnothing)\leq O(\frac{1}{\eps(d-1)}).$
    \item We don't expect a trickledown argument that only uses second eigenvalues of links of codimension 2 lead to a proof of the above conjecture. This is because in the case of a complete graph on $d$ vertices (and same set of colors) all links of codimension 2 have second eigenvalue $\frac{1}{q-\Delta}$. In general the trickle-down \cref{thm:oppenheim} is tight when all links of codimension 2 have the same eigenvalue (in the worst case).
    \item \cref{thm:conn+quad=Lor} and \cref{thm:maintechnical}  only give sufficient conditions for $p_\sigma$ to be $\cC_\sigma$-Lorentzian. In general, it could be that $p_\sigma$ is $\cC_\sigma$-Lorentzian for significantly smaller values of $q\ll 2\Delta$. In such a case we need to find new techniques to verify whether a given polynomial is $\cC$-Lorentzian.
\end{itemize}

\printbibliography
\appendix
\section{Pairwise Influence vs the Spectral Pairwise 
Influence Matrices}
\label{app:IcI}
In this section we prove \cref{lem:IvscI}. We require the following lemma, which we will prove after proving \cref{lem:IvscI}.
\begin{lemma}
\label{lem:abgeometricmean}
    Let $P = \begin{bmatrix}
        0 & A \\ B & 0
    \end{bmatrix}$ be a real matrix with constant row sums $a$ and suppose $\lambda$ is an eigenvalue of $P$ other than $\pm a$. If $A$ is $m \times n$ with entries $a_{ij}$ and $B$ is $n \times m$ with entries $b_{ij}$ then
    \[
        |\lambda| \leq \frac{1}{2} \sqrt{\left(\max_{1 \leq i,j \leq m} \sum_{s=1}^n |a_{is}-a_{js}|\right) \left(\max_{1 \leq i,j \leq n} \sum_{s=1}^m |b_{is}-b_{js}|\right)}.
    \]
\end{lemma}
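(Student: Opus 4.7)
The plan is to exploit the bipartite block structure of $P$ and then apply the classical Dobrushin-coefficient argument to the square blocks of $P^2$. First, I would observe that
\[
    P^2 = \begin{bmatrix} AB & 0 \\ 0 & BA \end{bmatrix},
\]
so $\lambda^2$ is an eigenvalue of either $AB$ or $BA$. Since each row of $A$ and of $B$ sums to $a$, both $AB$ and $BA$ have all row sums equal to $a^2$, and the rescalings $\tilde{A} := A/a$ and $\tilde{B} := B/a$ are row-stochastic rectangular matrices (assuming $a \neq 0$; the degenerate case $a=0$ can be absorbed by a limiting perturbation). Thus $\tilde A \tilde B$ and $\tilde B \tilde A$ are row-stochastic square matrices whose Perron eigenvalue $1$ corresponds to the eigenvalue $a^2$ of the original blocks. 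The hypothesis $\lambda \neq \pm a$ translates precisely into $\lambda^2/a^2 \neq 1$, so $\lambda^2/a^2$ is a non-Perron eigenvalue of whichever of $\tilde A \tilde B, \tilde B \tilde A$ it belongs to.

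Next, I would invoke two standard facts about the Dobrushin ergodic coefficient $\delta(M) := \tfrac12 \max_{i,j}\|M_i - M_j\|_1$: (i) for any row-stochastic $M$ and probability row-vectors $p, q$, one has the contraction $\|pM - qM\|_1 \leq \delta(M)\,\|p-q\|_1$, which immediately yields the submultiplicativity $\delta(MN) \leq \delta(M)\,\delta(N)$ whenever the product makes sense; and (ii) every non-Perron eigenvalue $\mu \neq 1$ of a row-stochastic matrix $M$ satisfies $|\mu| \leq \delta(M)$. Combining these with the previous step yields
\[
    \frac{|\lambda|^2}{a^2} \;\leq\; \delta(\tilde A \tilde B) \;\leq\; \delta(\tilde A)\,\delta(\tilde B).
\]

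Finally, unwinding the rescaling via $\delta(\tilde A) = \tfrac{1}{2a}\max_{i,j\in[m]}\sum_{s=1}^n |a_{is}-a_{js}|$ and the analogous identity for $\tilde B$, the two factors of $a$ cancel and the claimed inequality drops out after taking square roots. The main technical subtlety lies in fact~(ii) when $\mu$ is complex: the direct max-minus-min argument for real eigenvectors adapts either via an oscillation semi-norm on the complexified quotient by the all-ones direction, or by passing to $M^k$ and combining $\delta(M^k) \leq \delta(M)^k$ with Gelfand's formula for the spectral radius. In the intended application to \cref{lem:IvscI}, the matrices $A$ and $B$ arise from conditional probabilities and are hence entrywise non-negative, so the Dobrushin inputs used here are the classical ones.
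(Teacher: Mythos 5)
Your proposal is correct and is essentially the paper's own argument in different clothing: the paper likewise proves the Dobrushin-type contraction $f(A\bm{z}) \le \frac12 f(\bm{z})\max_{i,j}\sum_s|a_{is}-a_{js}|$ for the oscillation seminorm $f(\bm{z})=\max_{i,j}|z_i-z_j|$ (via Seneta's decomposition lemma, which also covers complex vectors), composes it to bound $f(AB\bm{y})$, and feeds in a right eigenvector of $P$ satisfying $AB\bm{y}=\lambda^2\bm{y}$ with $f(\bm{y})\neq 0$ because $\lambda\neq\pm a$. The only real difference is that you normalize by $a$ to phrase things with row-stochastic matrices, which forces the $a\neq 0$ perturbation and the entrywise non-negativity hedge; the paper works directly with the unnormalized blocks, where the same contraction and eigenvalue facts hold for any real matrix with constant row sums, so those caveats never arise.
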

\begin{proof}[Proof of \cref{lem:IvscI}]
Let $I$ be the influence matrix of $\mu$. 
By \cref{lem:rhoaverage}, 
$$ \lambda_{\max}(\bar{I})=\rho(\bar{I})\leq \rho(I).$$
Next, we show that for any $u,v$,
\begin{equation}\label{eq:cIvsbarI} \cI(u,v)\leq \bar{I}(u,v).
\end{equation}
Note that this finishes the proof as by \cref{lem:monotoneeigenvalues} it would imply that $\lambda_{\max}(\cI)\leq\lambda_{\max}(\bar{I}).$
For a pair of vertices $u,v$ consider the worst link $\tau$ of codimension 2 such that $u,v\notin V(\tau)$ in the sense that $\lambda_2(P_\tau)=\cI(u,v).$ Such a link exists by the definition of the spectral pairwise influence matrix $\cI$.

Now we can write $P_\tau=\begin{bmatrix}0&A\\B&0\end{bmatrix}$ 
as a matrix with non-negative entries, such that every row sum is 1. 
Let $a_{s,s'}$ (resp. $b_{s,s'}$) be entries of $A$ (resp. $B$). 
Suppose the rows of $A$ correspond to spins of $u$ and the rows of $B$ correspond to the spins of $v$.
It then follows by the definition of the influence matrix $I$ that 
$$ \frac12 \left(\max_{1 \leq i,j \leq m} \sum_{s=1}^n |a_{is}-a_{js}|\right)\leq I_{u\to v} \quad \quad \frac12 \left(\max_{1 \leq i,j \leq n} \sum_{s=1}^m |b_{is}-b_{js}|\right)\leq I_{v\to u}.$$
Since the skeleton $G_\tau$ of $X_\tau$ is connected, we have that $\lambda_2(P_\tau) \neq 1$. Thus either 
$$\lambda_2(P_\tau) < 0=\cI(u,v)\leq \bar{I}(u,v),$$  or else by \cref{lem:abgeometricmean} we have
$$ \cI(u,v)=\lambda_2(P_\tau) \leq  \frac{1}{2} \sqrt{\left(\max_{1 \leq i,j \leq m} \sum_{s=1}^n |a_{is}-a_{js}|\right) \left(\max_{1 \leq i,j \leq n} \sum_{s=1}^m |b_{is}-b_{js}|\right)} \leq \bar{I}(u,v).$$
This proves \eqref{eq:cIvsbarI} as desired.
\end{proof}

We now prove \cref{lem:abgeometricmean}, which is a bipartite version of a result found in \cite{seneta1981non}. That said, we will make use of the following intermediate lemma from \cite{seneta1981non} for the proof.

\begin{lemma}[Lemma 2.4 of \cite{seneta1981non}]
    Suppose $\bm\delta \in \R^n$, $n \geq 2$, $\bm\delta^\top \bm{1} = 0$, $\bm\delta \neq 0$. Then for a suitable set $S = S(\bm\delta)$ of ordered pairs of indices $(i,j)$ with $1 \leq i,j \leq n$, we have
    \[
        \bm\delta = \sum_{(i,j) \in S} \left(\frac{\eta_{ij}}{2}\right) \cdot \gamma(i,j),
    \]
    where $\eta_{ij} > 0$ and $\sum_{(i,j) \in S} \eta_{ij} = \|\bm\delta\|_1$, and $\gamma(i,j) = \bm{e}_i - \bm{e}_j$ where $\bm{e}_i$ is the $i$th standard basis vector.
\end{lemma}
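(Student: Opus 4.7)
The plan is to prove this decomposition by induction on the number of nonzero entries of $\bm\delta$, via a greedy transportation-style construction. The key observation is that the hypothesis $\bm\delta^\top \bm{1} = 0$ with $\bm\delta \neq 0$ forces the existence of indices $i$ with $\delta_i > 0$ and $j$ with $\delta_j < 0$, since otherwise the entries would all be of one sign and sum to zero only trivially.

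First I would handle the base case of two nonzero entries: if $\delta_i > 0$ and $\delta_j < 0$ are the only nonzero entries, then the zero-sum condition forces $\delta_i = -\delta_j$, and we may take $S = \{(i,j)\}$ with $\eta_{ij} = 2\delta_i = \|\bm\delta\|_1$ so that $\bm\delta = (\eta_{ij}/2)\gamma(i,j)$. For the inductive step, pick any $i$ with $\delta_i > 0$ and any $j$ with $\delta_j < 0$ and set
\[
    \eta_{ij} = 2\min(\delta_i, -\delta_j) > 0, \qquad \bm\delta' = \bm\delta - \tfrac{\eta_{ij}}{2}\gamma(i,j).
\]
By the choice of $\eta_{ij}$, one of $\delta_i', \delta_j'$ is zero while the other retains its original sign (or is also zero), so $\bm\delta'$ has strictly fewer nonzero entries than $\bm\delta$. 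The zero-sum property is preserved because $\gamma(i,j)^\top \bm{1} = 0$. If $\bm\delta' = \bm{0}$, we stop; otherwise apply the induction hypothesis to $\bm\delta'$ and adjoin $(i,j)$ with weight $\eta_{ij}$ to the resulting set $S'$.

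The norm bookkeeping is the only place where care is needed. The update adds $-\eta_{ij}/2$ to the positive coordinate $\delta_i$ (which remains $\geq 0$ by the choice of $\eta_{ij}$) and $+\eta_{ij}/2$ to the negative coordinate $\delta_j$ (which remains $\leq 0$), so the sum of positive entries decreases by $\eta_{ij}/2$ and the sum of absolute values of negative entries also decreases by $\eta_{ij}/2$. Hence $\|\bm\delta'\|_1 = \|\bm\delta\|_1 - \eta_{ij}$, and by induction the sum of weights in the decomposition of $\bm\delta'$ equals $\|\bm\delta'\|_1$, so the total weight in the decomposition of $\bm\delta$ equals $\eta_{ij} + \|\bm\delta'\|_1 = \|\bm\delta\|_1$ as required.

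I do not expect a serious obstacle here; the construction terminates in at most $n-1$ steps since each iteration zeroes at least one coordinate, and each pair $(i,j)$ is introduced at most once (once a coordinate is zeroed, the sign-selection rule never picks it again). The only subtlety is verifying that the $\ell_1$ decrement is exactly $\eta_{ij}$ rather than something smaller, which is ensured precisely by the cap $\eta_{ij} = 2\min(\delta_i, -\delta_j)$ that prevents overshoot of either coordinate past zero.
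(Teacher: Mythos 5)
Your proof is correct. Note that the paper itself offers no proof of this statement to compare against: it is quoted verbatim as Lemma~2.4 of Seneta's book and used as a black box in the proof of \cref{lem:abgeometricmean}. Your greedy transportation argument is the standard and natural one, and the details check out: the existence of a strictly positive and a strictly negative coordinate follows from $\bm\delta^\top\bm{1}=0$ and $\bm\delta\neq 0$; the cap $\eta_{ij}=2\min(\delta_i,-\delta_j)$ guarantees that neither coordinate overshoots zero, so the $\ell_1$ norm drops by exactly $\eta_{ij}$ per step; the zero-sum condition is preserved since $\gamma(i,j)^\top\bm{1}=0$; and since each step only touches coordinates that are currently nonzero, a zeroed coordinate stays zero, so no ordered pair is ever repeated and $S$ is a genuine set with $\sum_{(i,j)\in S}\eta_{ij}=\|\bm\delta\|_1$ by telescoping. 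One tiny observation you could add for completeness: after a step, $\bm\delta'$ cannot have exactly one nonzero entry (it still sums to zero), so the induction on the number of nonzero entries never falls between your base case and the $\bm\delta'=\bm{0}$ case; but this is implicit in what you wrote and does not affect correctness.
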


\begin{proof}[Proof of \cref{lem:abgeometricmean}]
    Let $\bm{z}^\top = (z_1,\ldots,z_n)$ be an arbitrary row vector of complex numbers. For any real $\bm\delta \neq \bm{0}$ with $\bm\delta^\top \bm{1} = 0$, the previous lemma implies
    \begin{equation}\label{eq:zdelta}
        |\bm{z}^\top \bm\delta| \leq \sum_{(i,j) \in S} \left(\frac{\eta_{ij}}{2}\right) |z_i-z_j| \leq \frac{1}{2} \max_{i,j} |z_i-z_j| \cdot \|\bm\delta\|_1.
    \end{equation}
    Note that the first term being at most the last term also trivially holds for $\bm\delta = 0$.
    Letting $f(\bm{z}) = \max_{i,j} |z_i-z_j|$, 
    \[
        f(A\bm{z}) = \max_{i,j} \left|\sum_{s=1}^n (a_{is}-a_{js})z_s\right| \leq \frac{1}{2} f(\bm{z}) \max_{i,j} \sum_{s=1}^n |a_{is}-a_{js}|.
    \]
    To see the inequality
    let $\bm\delta$ be the vector in $\R^n$ given by the difference of the $i$th and $j$th rows of $A$. Then $\bm\delta^\top \bone = 0$, and so the inequality follows by \eqref{eq:zdelta}.
    
    Since we have a similar inequality for $f(B\bm{y})$ for any complex row vector $\bm{y}^\top = (y_1,\ldots,y_m)$, we obtain
    \[
        f(AB\bm{y}) \leq \frac{1}{2} f(B\bm{y}) \max_{i,j} \sum_{s=1}^n |a_{is}-a_{js}| \leq \frac{1}{4} f(\bm{y}) \left(\max_{i,j} \sum_{s=1}^n |a_{is}-a_{js}|\right) \left(\max_{i,j} \sum_{t=1}^m |b_{it}-b_{jt}|\right).
    \]
    Now let $\bm{x}^\top = (\bm{y}^\top, \bm{z}^\top) = (y_1,\ldots,y_m,z_1,\ldots,z_n)$ be a right eigenvector of $P$ corresponding to an eigenvalue $\lambda$ of $P$, so that $AB\bm{y} = \lambda A\bm{z} = \lambda^2 \bm{y}$. We thus have
    \[
        |\lambda|^2 f(\bm{y}) = f(AB\bm{y}) \leq \frac{1}{4} f(\bm{y}) \left(\max_{i,j} \sum_{s=1}^n |a_{is}-a_{js}|\right) \left(\max_{i,j} \sum_{t=1}^m |b_{it}-b_{jt}|\right).
    \]
    Now $\lambda \neq \pm a$ implies both $\bm{y}$ and $\bm{z}$ are not multiples of the all-ones vector. Further, at least one of $\bm{y}$ and $\bm{z}$ is non-zero, and we can assume it is $\bm{y}$ without loss of generality by possibly swapping $A$ and $B$. Thus $f(\bm{y}) \neq 0$, and therefore
    $$|\lambda|^2 \leq \frac14 \left(\max_{i,j} \sum_{s=1}^n |a_{is}-a_{js}|\right) \left(\max_{i,j} \sum_{t=1}^m |b_{it}-b_{jt}|\right). $$
    The statement follows by taking the square root of both sides. 
\end{proof}

\section{Sum Graphs}
Let $G=(L,R,E)$ be a bipartite graph with $L=\{0,1,\dots,n_L-1\}, R=\{n_L,n_L+1,\dots,n_L+n_R-1\}$ for $n_L,n_R\leq C+1$. Let $w:E\to\R$ defined as follows: For $i\in L, j\in R$ with $i+j-n_L\leq C$ there is an edge with weight $w(i,j)=\lambda^{i+j-n_L}.$

\begin{lemma} \label{lem:multi-state-codim2}
    If $\lambda<1$, then  $\lambda_2(P_G)\leq \frac{\lambda^{(C+1)/2}}{1-\sqrt{\lambda}}$, where $P_G$ is the transition probability matrix of the simple random walk on $G$
\end{lemma}
\begin{proof}
    Let $A$ be the adjacency matrix of $G$ and $D$ be the degree matrix. Note that,
    \begin{equation}\label{eq:deglowerbound} d_w(i) \geq \lambda^i, ~~ \forall i\in L, \quad\quad d_w(j) \geq \lambda^{j-n_L}, ~~ \forall j\in R.\end{equation}
    We need to show that
    \begin{equation} \label{eq:goalADlambda}A\preceq vv^T + \frac{\lambda^{(C+1)/2}}{1-\sqrt{\lambda}}D\end{equation}
    for some vector $v\in \R^{L+R}$.
    
    Let $v_L\in \R^{L}, v_R\in \R^R$ defined as follows: For any $i\in L$, $v_L(i)=\lambda^i$ and for any $j\in R$, $v_R(j)=\lambda^{j-n_L}$. Let
    $$ B:=A-(v_L+v_R)(v_L+v_R)^T +v_Lv_L^T + v_Rv_R^T$$

    To prove \eqref{eq:goalADlambda}, it is enough to show that 
\begin{equation}\label{eq:goalADlambda2}B\preceq \frac{\lambda^{(C+1)/2}}{1-\sqrt{\lambda}}D.\end{equation}
    First, we observe that
$$ B=\sum_{i\in L,j\in R: i+j-n_L>C} B_{i,j},$$
where $B_{i,j}$ is only supported on the $(i,j)$ and $(j,i)$ entries and it is the same as $B$ on those entries. 
 
By \cref{lem:2by2-ineq} below we can write
$$ B_{i,j}\preceq \lambda^{i+\frac{i+j-n_L}{2}}I_{i} + \lambda^{j-n_L + \frac{i+j-n_L}{2}}I_{j},$$
where $I_{i}\in \R^{(L+R)\times (L+R)}$ is a diagonal matrix with $\bone_i$ on the diagonal. 
Having this, we write,
$$ B\preceq \sum_{i,j: i+j-n_L>C}\left(\lambda^{i+\frac{i+j-n_L}{2}}I_{i} + \lambda^{j-n_L + \frac{i+j-n_L}{2}}I_{j}\right) \preceq \frac{\lambda^{(C+1)/2}}{1-\sqrt{\lambda}}D$$
To see the last inequality, notice that for any $i\in L$ we have
\[
    \sum_{j\in R:i+j-n_L>C} \lambda^{i+\frac{i+j-n_L}{2}} \leq \lambda^i\cdot \left(\lambda^{\frac{C+1}{2}}+\lambda^{\frac{C+2}{2}}+\dots \right) = \lambda^i \cdot \frac{\lambda^{(C+1)/2}}{1-\sqrt{\lambda}}
\]
where we used \eqref{eq:deglowerbound} in the last inequality. A similar inequality holds for any $j\in R$. This proves \eqref{eq:goalADlambda2} as desired. 
\end{proof}



\begin{lemma} \label{lem:2by2-ineq}
    For any $\alpha \in \R$ and for any $a,b \geq 0$ such that $a\cdot b=\alpha^2$ we have
    $$ \begin{bmatrix} 0 & \alpha \\ \alpha & 0\end{bmatrix} \preceq \begin{bmatrix} a & 0 \\ 0 & b\end{bmatrix}$$
\end{lemma}
\begin{proof}
    The statement simply follows from when we subtract the left from the right  we get a  non-negative diagonal with a 0 determinant matrix rank 2 matrix.
\end{proof}

\end{document}